\DeclareMathOperator{\trace}{trace}
\DeclareMathOperator{\Ad}{Ad}
\newcommand{\LL}{\mathbb{L}}
\newcommand{\bbar}{\begin{pmatrix}}
\newcommand{\ebar}{\end{pmatrix}}
\newcommand{\enormal}{{\bf n}_E}
\newcommand{\G}{\mathcal{G}}
\newcommand{\gggg}{\mathcal{G}}
\newcommand{\bdm}{\begin{displaymath}}
\newcommand{\edm}{\end{displaymath}}
\newcommand{\beq}{\begin{equation}}
\newcommand{\beqa}{\begin{eqnarray}}
\newcommand{\beqas}{\begin{eqnarray*}}
\newcommand{\eeq}{\end{equation}}
\newcommand{\eeqa}{\end{eqnarray}}
\newcommand{\eeqas}{\end{eqnarray*}}
\newcommand{\dd}{\textup{d}}
\newcommand{\E}{{\mathbb E}}
\newcommand{\R}{{\mathbb R}}
\newcommand{\C}{{\mathbb C}}
\newcommand{\D}{{\mathbb D}}
\newcommand{\PP}{\mathcal{P}}
\newcommand{\real}{{\mathbb R}}
\newcommand{\SSS}{{\mathbb S}}
\newcommand{\sym}{\mathcal{S}}
\newcommand{\B}{\mathcal{B}}
\newcommand{\Z}{{\mathbb Z}}
\newcommand{\ip}[2]{\langle#1,#2\rangle}
\newcommand{\ipb}[2]{\biggl\langle#1,#2\biggr\rangle}
\newcommand{\sign}{\mathrm{sign}}
\newcommand{\SLR}{\mathrm{SL}(2,\real)}
\newcommand{\slR}{\mathfrak{sl}(2,\real)}
\newcommand{\SLC}{\mathrm{SL}(2,\C)}
\newcommand{\slC}{\mathfrak{sl}(2,\C)}
   \newtheorem{theorem}{Theorem}[section]
   \newtheorem{proposition}[theorem]{Proposition}
   \newtheorem{lemma}[theorem]{Lemma}
   \newtheorem{definition}[theorem]{Definition}
  \newtheorem{problem}[theorem]{Problem}
 \theoremstyle{remark}
   \newtheorem{remark}[theorem]{Remark}
\numberwithin{equation}{section}
\begin{document}

\title[Timelike CMC surfaces with Singularities]{Timelike Constant Mean Curvature Surfaces with Singularities}

\begin{abstract}
We use integrable systems techniques to study the singularities of timelike non-minimal constant mean curvature (CMC) surfaces in the
Lorentz-Minkowski 3-space.  The singularities arise at the boundary
of the Birkhoff big cell of the loop group involved. We examine the 
behaviour of the surfaces at the big cell boundary, generalize the
definition of CMC surfaces to include those with finite, generic 
singularities,
and show how to construct surfaces with prescribed singularities by
solving a singular geometric Cauchy problem.  The solution shows that
the generic singularities of the generalized surfaces are 
cuspidal edges, swallowtails and cuspidal cross caps.
\end{abstract}

\author{David Brander}
\address{Department of Mathematics\\ Matematiktorvet, Building 303 S\\
Technical University of Denmark\\
DK-2800 Kgs. Lyngby\\ Denmark}
\email{D.Brander@mat.dtu.dk}

\author{Martin Svensson}
\address{Department of Mathematics \& Computer Science\\ and CP3-Origins, Centre of Excellence for Particle Physics Phenomenology\\
  University of Southern Denmark\\ Campusvej 55\\ DK-5230 Odense M\\
   Denmark}
\email{svensson@imada.sdu.dk}

\keywords{Differential geometry, integrable systems,
timelike CMC surfaces, singularities, constant mean curvature}

\subjclass[2010]{Primary 53A10; Secondary 53C42, 53A35}

\thanks{Research partially sponsored by FNU grant \emph{Symmetry Techniques in Differential Geometry}, and by CP3-Origins DNRF 
Centre of Excellence Particle Physics Phenomenology. Report no. 
CP3-Origins-2011-32 \& Dias-2011-24.}

\maketitle

\section{Introduction}
\noindent
The study of singularities of timelike constant mean curvature (CMC) surfaces in Lorentz-Minkowski
 $3$-space $\LL^3$, initiated in this article, has two contexts in current research:  One context is the use of loop group techniques in geometry, 
 whereby special submanifolds are constructed from, or represented by, simple data via loop group decompositions.
  When the underlying Lie group is non-compact the decomposition used in the
 construction breaks down on certain lower dimensional subvarieties. It is of interest to understand 
 what effect this has on the special submanifold.   \\
 
\noindent
 The second context is the study of surfaces with singularities. This has gained some attention in recent years: see, for example 
  \cite{fl2007, fls2005, fsuy, kimkohshinyang2011,  kimyang2007,
     umsa2009,  suy, umeda} and related works.
 Singularities arise naturally and frequently in geometry: one motivation for
 their study is that many surface classes have either no, or essentially no,
 complete regular examples, the most famous case being pseudospherical surfaces.
 One generalizes the definition of a surface to that of a \emph{frontal}, a map which is immersed on an open dense subset of the domain and has a well defined unit normal everywhere.  A basic question is to find the generic singularities for a given surface class.
For example,  the generic singularities of constant Gauss curvature surfaces in Euclidean $3$-space are cuspidal edges and swallowtails \cite{ishimach}, whilst spacelike
mean curvature zero surfaces in $\LL^3$ have cuspidal cross caps  in
 addition to the two singularities just mentioned \cite{umyam2006}.  
 The point is that different geometries have different 
generic singularities. The first-named of the present authors studied singularities of spacelike 
non-zero CMC surfaces in
 $\LL^3$ in \cite{sbjorling}, of which more below, but the singularities of \emph{timelike} CMC surfaces appear to be uninvestigated.\\

\noindent
In the loop group context, solutions are generally obtained via either the \emph{Iwasawa} decomposition
$\Lambda G^\C = \Omega G \, \cdot \, \Lambda ^+ G^\C$, a situation which includes harmonic maps
into symmetric spaces,  or via the \emph{Birkhoff} decomposition $\Lambda G = \Lambda^- G \cdot \Lambda ^+ G$,
a situation which includes \emph{Lorentzian} harmonic maps into Riemannian symmetric spaces.  Both of these
types of harmonic maps correspond to various well-known surfaces classes, such as constant Gauss or mean curvature surfaces in space forms -- for surveys of some of these, see \cite{bobenko1994, dorfsurvey}.  When the real form $G$ is non-compact, the left hand side of the decomposition is replaced by an open dense subset, the \emph{big cell}, of the loop group, rather than the whole.  Since, at the global level,
 there is no general way to avoid the big cell
boundary,  there remains the question of what happens to the surface at 
this boundary.  \\

\noindent
The Riemannian-harmonic (Iwasawa) case was investigated in \cite{sbjorling, brs}, through the 
study of spacelike CMC surfaces in $\LL^3$.  The big cell boundary is a disjoint union
 $\mathcal{P}_{\pm 1} \cup \mathcal{P}_{\pm 2} \cup ....$ of smaller cells, with increasing codimension.
The lowest codimension small cells, $\mathcal{P}_{\pm 1}$, where generic singularities would occur, were analyzed, and it was found that finite singularities occur on one of these, whilst the surface blows up
at the other.  In \cite{sbjorling} a singular Bj\"orling construction was devised to construct prescribed singularities, and the generic singularities for the generalized surface class
defined there were found to be 
cuspidal edges, swallowtails and cuspidal cross caps. \\

\noindent
In the present work, we turn to the Lorentzian-harmonic (Birkhoff) situation, and study the example of 
\emph{timelike} CMC surfaces.  The loop group construction differs from the spacelike case in that
the basic data are now two functions of one variable, rather than the one holomorphic function of
the Riemannian harmonic case.  The   Birkhoff decomposition construction  compared to
the Iwasawa construction, as well as the hyperbolic as opposed to elliptic nature of the problem,
pose new challenges.  However, we  obtain analogous results to those of the spacelike case in \cite{sbjorling} and \cite{brs}.\\

\subsection{Results of this article}
The generalized d'Alembert representation used here, which was 
given by Dorfmeister, Inoguchi and Toda \cite{dit2000}, allows one to construct all timelike CMC
surfaces from  pairs of functions of one variable $\hat X(x)$ and $\hat Y(y)$ which take values in a certain real
form of the loop group $\Lambda G^\C$, where $G=\SLR$. The construction  depends crucially on 
a pointwise Birkhoff decomposition of the map $\hat \Phi(x,y) := \hat X^{-1}(x) \hat Y(y)$.  
The data are thus at the big cell boundary at $z_0 = (x_0,y_0)$ if $\hat \Phi(z_0)$ is not in
the Birkhoff big cell.  The complement  of the big cell is a disjoint union
$\bigcup_{j=1}^\infty \mathcal{P}_L^{\pm j}$ of subvarieties.  The codimension of the small cells increases
with $|j|$, and therefore generic singularities should occur only on $\mathcal{P}_L^{\pm1}$.
We prove in Theorem \ref{firstsmallcellthm} that if $\hat \Phi(z_0) \in \mathcal{P}_L^1$ then
the surface has a finite singularity, and at $\mathcal{P}_L^{-1}$ 
 the surface blows up. 
 To investigate the type of the finite singularity, we define generalized timelike
CMC surfaces to be surfaces that can locally be represented by d'Alembert data which maps
into the union of the the big cell and $\mathcal{P}_L^1$.\\

\noindent
 We restrict the discussion to singularities that are  \emph{semi-regular}, that is, where the differential of the surface $f$ has rank $1$, a condition that can be prescribed in the data $\hat X$ and $\hat Y$.  These surfaces are frontals, and there is a well defined (up to local choice of orientation) Euclidean unit normal
$\enormal$, which can be locally expressed by $f_x \times f_y = \chi \enormal$. The function 
$\chi$ obviously vanishes at points where $f$ is not immersed, and we generally study 
\emph{non-degenerate} singularities, that is, points where $\dd \chi \neq 0$.\\

\noindent
On generalized timelike CMC surfaces, one finds that singularities come in two classes, which we call \emph{class I} and \emph{class II}, respectively characterized geometrically by 
the property that the direction $\enormal$ is \emph{not} or \emph{is} lightlike in $\LL^3$. 
Class I singularities never occur at the big cell boundary, but rather due to one of the maps
 $\hat X$ or $\hat Y$ not
satisfying the regularity condition for a smooth surface.  We discuss these singularities in Section \ref{classIsect}
and prove that the generic singularities
are cuspidal edges. 
 Such singularities can easily be prescribed by 
choosing $\hat X$ and $\hat Y$ accordingly, but there is no unique solution for the Cauchy
problem for such  a singular curve, because it is always a \emph{characteristic} curve for the underlying PDE. \\

\noindent
Class II singularities, on the other hand, always occur at the big cell boundary, and are the real object of interest in this article.  Note that, although in this case the tangent to the singular curve is lightlike, this does \emph{not} mean that the curve is characteristic in the coordinate domain, in contrast to the situation on an immersed surface.  The curve can be either non-characteristic  or characteristic, and  
generic non-degenerate singularities, studied in Section \ref{noncharsect},
 are non-characteristic.  In Section \ref{singpotsection}, we prove that all generalized 
 timelike CMC surfaces with non-characteristic class II singular curves can be produced 
 by certain "singular potentials". \\

\noindent
 In Section \ref{singgcpsect}, 
 Theorem \ref{singgcptheorem}, we find the singular potentials which solve the \emph{non-characteristic singular geometric Cauchy problem}, (Problem \ref{ncsgcpprob}),
 which is to find the generalized timelike CMC surface with prescribed non-characteristic 
 singular curve, and an additional (geometrically relevant) vector field prescribed along the curve.  The \emph{non}-singular version of this problem was solved in \cite{dbms1}, using
the generalized d'Alembert setup.  
It is not possible to 
apply the non-singular solution to the singular case because the solution depends on the construction
of an $\SLR$ frame for the surface, along the curve, directly from the geometric
Cauchy data.  However, the $\SLR$ frame blows up and is not defined at the big cell boundary,
necessitating a work-around. \\

\noindent
The solution of the singular geometric Cauchy problem is critical to the study of generic singularities in Section \ref{gensingsect}. 
The geometric Cauchy data consists of three functions
$s(v)$, $t(v)$ and $\theta(v)$ along a curve, which are more or less arbitrary.  The singularity
at the point $v=0$ is non-degenerate if and only if $\theta^\prime (0)\neq 0$ and $s (0)\neq \pm t(0)$.  Given this
assumption,  the main result of this section, Theorem \ref{singtheorem}, states that we have the following correspondences:
\beqas
 \textup{cuspidal edge}  & \leftrightarrow &  s(0)\neq 0\neq t(0), \\
  \textup{swallowtail}  & \leftrightarrow &  s(0)=0, 
      \textup{ and }  s^\prime (0) \neq0,\\
 \textup{cuspidal cross cap} & \leftrightarrow &  t(0) =0, 
      \textup{ and }  t^\prime(0) \neq 0.
 \eeqas
   This shows that the generic non-degenerate singularities are
just these three, since the only other possibility is a higher order zero.\\
 
\begin{figure}[here]  
\begin{center}
\includegraphics[height=40mm]{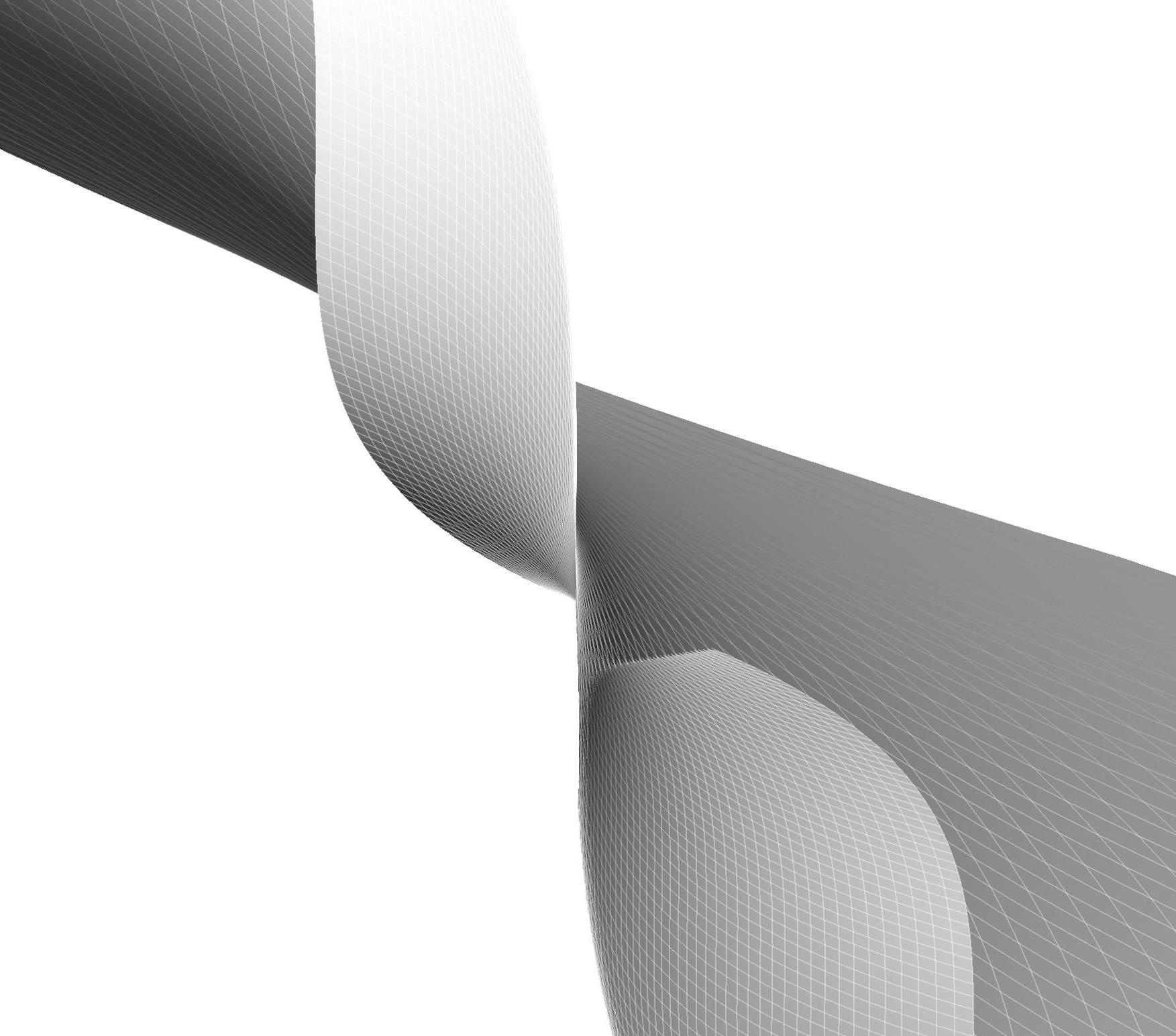} \hspace{2cm}
\includegraphics[height=40mm]{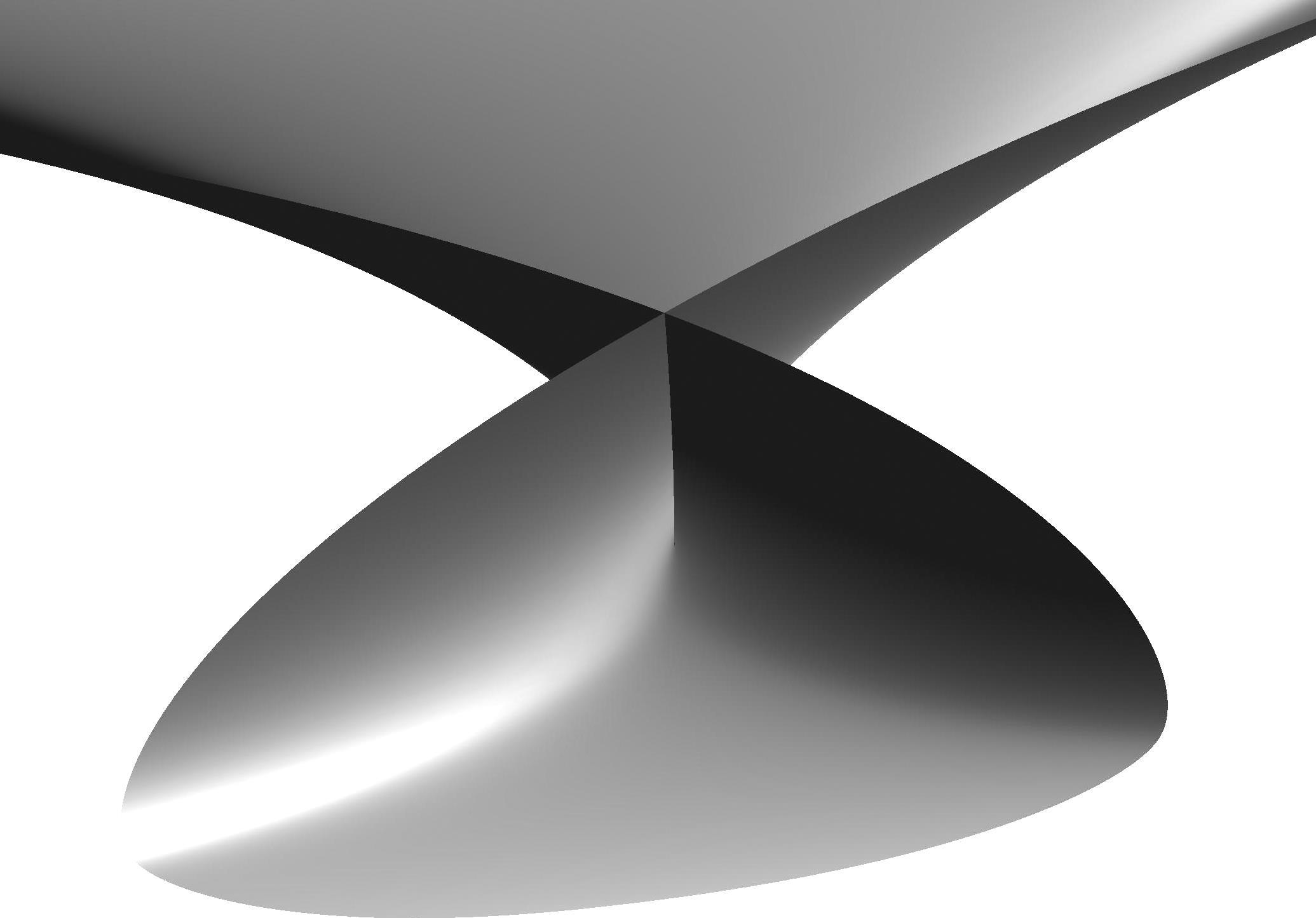} 
\end{center}
\caption{Numerical plots of solutions to the geometric Cauchy problem.
Left: $s(v)= 2+0.2v^2$, $t(v) = v$ (cuspidal cross cap). Right: $s(v)=v$, $t(v) =1$ (swallowtail).}
\label{figure1}
\end{figure}

\noindent
In the last two sections we consider non-generic singularities.  In Section
 \ref{chartypesect} we solve the geometric Cauchy problem for \emph{characteristic} data,
 where there are infinitely many solutions.  The singular curve is always a straight line
 in this case.   In Section \ref{numsect} we compute numerically some examples of degenerate singularities.\\

\noindent
In conclusion, we remark that the
 results of this article, combined with the results on Riemannian harmonic maps in 
\cite{sbjorling, brs}, ought to give a good indication of the typical situation at the 
big cell boundary for surfaces associated to harmonic or Lorentzian harmonic maps.\\

 \noindent
\textbf{Notation:} If $\hat X$ is a map into a loop group or loop algebra, 
we will sometimes use $X^\lambda$ for the corresponding group or algebra valued map 
$\hat X \big |_ \lambda$, obtained
by evaluating at a particular value $\lambda$ of the loop parameter. We also use $X := X^1$.
We use $\ip{\cdot}{\cdot}_E$ and $\ip{\cdot}{\cdot}_L$ for Euclidean and
Lorentzian inner products respectively.  
 We use $O(\lambda^k)$ for an expression $g(\lambda)$ such that $\lim_{\lambda \to 0} g(\lambda)/\lambda^k$ is finite, and
  $O_\infty(\lambda^k)$ for the anlogue when $\lambda \to \infty$.\\  


\section{Background material}
\noindent
We give a brief summary of the 
 method given by Dorfmeister, Inoguchi and Toda \cite{dit2000} for constructing all timelike CMC surfaces from pairs of functions of one variable. The conventions we will use are mostly the same as those we used in \cite{dbms1}, and the reader is therefore referred to that article for more details of the following sketch.\\

\subsection{Loop groups}
Let $ \G=\Lambda\SLC_{\sigma\rho}$ be  the
 group of loops in $\SLC$, with loop parameter $\lambda$, that  are fixed by the commuting involutions 
$$
(\rho \gamma) (\lambda) = \overline{\gamma(\bar \lambda)}, \quad
(\sigma \gamma) (\lambda) = \Ad_{P} \gamma(-\lambda), \quad 
P =  \textup{diag}(1,-1).
$$
The group $\G$ is a real form  of 
$\G^\C=\Lambda\SLC_\sigma$, the group of loops
fixed by $\sigma$.\\

\noindent
Let $\Lambda^\pm\SLC_\sigma$ denote the subgroup of $\G^\C$ consisting of loops that extend holomorphically to $\D^\pm$, where $\D^+$ is the unit disc and $\D^- = \SSS^2 \setminus \{ \D^+ \cup \SSS^1 \}$, the exterior disc in the Riemann sphere. Define 
$$
\G^{\pm}=\G\cap\Lambda^\pm\SLC_\sigma,\quad \G^+_*=\{\gamma\in\G^+\ |\ \gamma(0)=I\},\quad \G^-_*=\{\gamma\in\G^-\ |\ \gamma(\infty)=I\}.
$$
We define the complex versions $\G^{\C\pm}$ analogously by substituting $\G^\C$ for $\G$ in the above definitions.\\

\noindent
The essential tool from loop groups needed is the Birkhoff decomposition,
 due to Pressley and Segal \cite{PreS}. See \cite{branderdorf} for a more general statement which includes the following case:
\begin{theorem}[The Birkhoff decomposition]\label{birkhoff}$\phantom{}$
The sets $\B_L=\G^-\cdot\G^+$ and $\B_R=\G^+\cdot\G^-$ are both open and dense in $\G$.  The multiplication maps 
$$
\G^-_*\times\G^+\to\B_L \ \text{  and  }\ \G^+_*\times\G^-\to\B_R
$$
are both real analytic diffeomorphisms.
\end{theorem}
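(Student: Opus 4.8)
The plan is to deduce the real statement from the complex Birkhoff decomposition of $\G^\C=\Lambda\SLC_\sigma$, the classical theorem of Pressley and Segal, which asserts that $\B_L^\C=\G^{\C-}\cdot\G^{\C+}$ and $\B_R^\C=\G^{\C+}\cdot\G^{\C-}$ are open and dense in $\G^\C$, with multiplication giving biholomorphisms $\G^{\C-}_*\times\G^{\C+}\to\B_L^\C$ and $\G^{\C+}_*\times\G^{\C-}\to\B_R^\C$. Everything then reduces to checking that this complex decomposition is compatible with the anti-holomorphic involution $\rho$ which cuts out the real form $\G$, and to transferring openness and density from $\G^\C$ down to $\G$.

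First I would record the key equivariance: $\rho$ preserves each of $\G^{\C\pm}$ and $\G^{\C\pm}_*$. Indeed, if $\gamma(\lambda)=\sum_k a_k\lambda^k$ converges on $\D^+$, then $(\rho\gamma)(\lambda)=\overline{\gamma(\bar\lambda)}=\sum_k\overline{a_k}\lambda^k$ has the same radius of convergence, so $\rho$ maps $\Lambda^+\SLC_\sigma$ to itself; by the analogous expansion in $1/\lambda$ it maps $\Lambda^-\SLC_\sigma$ to itself. Since $\overline\infty=\infty$ and $\rho$ fixes $I$, the normalizations $\gamma(0)=I$ and $\gamma(\infty)=I$ are also preserved. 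As $\rho$ commutes with $\sigma$ and is multiplicative, it restricts to involutions of $\G^{\C\pm}$ and $\G^{\C\pm}_*$ whose fixed-point sets are exactly $\G^\pm$ and $\G^\pm_*$.

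Bijectivity of the real multiplication maps then follows from uniqueness in the complex decomposition. Given $\gamma\in\G\cap\B_L^\C$, write $\gamma=\gamma_-\gamma_+$ with $(\gamma_-,\gamma_+)\in\G^{\C-}_*\times\G^{\C+}$ the unique complex factors. Applying $\rho$ and using $\rho\gamma=\gamma$ gives $\gamma=(\rho\gamma_-)(\rho\gamma_+)$ with $\rho\gamma_-\in\G^{\C-}_*$ and $\rho\gamma_+\in\G^{\C+}$, so uniqueness forces $\rho\gamma_-=\gamma_-$ and $\rho\gamma_+=\gamma_+$; hence both factors are $\rho$-fixed, i.e. lie in $\G^-_*$ and $\G^+$. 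Therefore $\B_L=\G\cap\B_L^\C$, and the real multiplication map $\G^-_*\times\G^+\to\B_L$ is the restriction of the complex biholomorphism to the corresponding real forms. It is thus a real-analytic bijection whose inverse is the restriction of the holomorphic inverse, hence a real-analytic diffeomorphism. The case of $\B_R$ is identical after exchanging the roles of $+$ and $-$.

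The main work is density. Openness of $\B_L=\G\cap\B_L^\C$ in $\G$ is immediate from openness of $\B_L^\C$. For density I would use that $\G$ is a real form of $\G^\C$, that is, a maximal totally real submanifold, so any holomorphic function that vanishes on a nonempty open subset of $\G$ vanishes identically. The complement $\G^\C\setminus\B_L^\C$ is a countable union of positive-codimension Birkhoff cells, each locally contained in the zero set of holomorphic functions which are not identically zero; restricting these to $\G$ gives real-analytic functions that are not identically zero, since $I\in\B_L$ shows the big cell meets $\G$. Their zero sets are therefore nowhere dense in $\G$, so $\G\setminus\B_L$ is contained in a countable union of nowhere dense closed sets and is meager; as $\G$ is a Baire space, the open set $\B_L$ is dense. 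I expect this transfer of density to the totally real form $\G$ to be the only genuinely analytic point, the factorization itself being formal once the $\rho$-equivariance is established.
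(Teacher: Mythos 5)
The paper offers no proof of this theorem: it is quoted from Pressley--Segal \cite{PreS}, with \cite{branderdorf} cited for the general statement covering this real form. Your reduction of the real statement to the complex Birkhoff decomposition via $\rho$-equivariance is exactly the mechanism of the cited source, so in substance you have reconstructed the proof behind the citation, and it is correct: $\rho$ is an anti-holomorphic, multiplicative involution commuting with $\sigma$ and preserving $\G^{\C\pm}$, $\G^{\C\pm}_*$ and the normalizations, so uniqueness of the complex factorization forces both factors of a $\rho$-fixed loop to be $\rho$-fixed; this gives $\B_L=\G\cap\B_L^\C$, and the real multiplication map is the restriction of the complex biholomorphism to the fixed-point sets, hence a real-analytic diffeomorphism. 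Openness is likewise immediate.

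One blemish in the density step: for the local defining functions of the small cells, your justification that their restrictions to $\G$ are not identically zero ``since $I\in\B_L$'' is misplaced --- those functions are defined only on neighbourhoods of points of the small cells, which need not contain $I$. The correct mechanism is the one you state just before: $\G$ is maximal totally real, so if such a holomorphic function vanished on a nonempty open piece of $\G$, it would vanish on an open subset of $\G^\C$, contradicting that it is a defining function on a (connected) neighbourhood. Better still, you can discard the countable-union/Baire apparatus entirely, along with any worry about connectedness of $\G$ or complete metrizability of the loop group topology: the complement of $\B_L^\C$ is globally the zero set of a \emph{single} holomorphic section of a line bundle over $\G^\C$ --- the very fact this paper invokes in the proof of Theorem \ref{firstsmallcellthm}, item (1). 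Hence $\G\setminus\B_L$ is the zero set of the real-analytic restriction of that section to $\G$, a closed set which by total reality and density of $\B_L^\C$ has empty interior, and density of $\B_L$ in $\G$ follows at once.
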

Note that the analogue also holds, substituting $\G^\C$, $\G^{\C \pm}$ and $\G^{\C \pm}_*$ for $\G$, $\G^\pm$ and $\G^\pm_*$, respectively, writing $\B_L^\C=\G^{\C-}\cdot\G^{\C+}$ and $B_R^\C=\G^{\C+}\cdot\G^{\C-}$. \\

\noindent
The basis of the loop group approach is that timelike CMC surfaces correspond
to a particular type of map into $\G$:
\begin{definition}  \label{admissibleframedef}
Let $M$ be a simply connected open subset of $\real^2$, and let $(x,y)$ denote the standard 
coordinates. An \emph{admissible frame} on $M$ is a smooth map $\hat F: M \to \G$  such that the Maurer-Cartan form of $\hat F$ is a Laurent polynomial in $\lambda$ of the form:
\bdm
\hat F^{-1} \dd \hat F =\lambda\, A_{1}\, \dd x + \alpha_0 + \lambda^{-1}A_{-1}\, \dd y,
\edm
where the $\slR$-valued 1-form $\alpha_0$ is constant in $\lambda$.  The admissible frame
 $\hat F$ is said to be \emph{regular} if the components $[A_1]_{21}$ and $[A_{-1}]_{12}$ are non-vanishing. \\
\end{definition}

\subsection{Timelike CMC surfaces as admissible frames}
We identify the Lie algebra $\slR$ with Lorentz-Minkowski space $\LL^3$, with basis:
\begin{eqnarray*}  
e_0 = \bbar 0 & -1 \\1 & 0\ebar, & e_1 = \bbar 0 & 1 \\1 & 0\ebar, & 
    e_2 = \bbar -1 & 0 \\ 0 & 1\ebar,
\end{eqnarray*}
which are orthonormal with respect to the inner product
 $\ip{X}{Y}_L=\frac{1}{2}\trace(XY)$,  and with $\ip {e_0}{e_0}_L=-1$.\\

\noindent
Let $M$ be a simply connected domain in $\real^2$, and 
$f:M\to\LL^3$ a  timelike immersion. The induced metric determines a Lorentz conformal structure on $M$. For any lightlike (also called null)
 coordinate system $(x,y)$ on $M$, we define a function $\omega: M \to \real$ by the condition that the induced metric is given by
\bdm
\dd s^2 = \varepsilon  e^\omega \, \dd x \, \dd y, \hspace{1cm} \varepsilon= \pm 1.
\edm
Let $N$ be a unit normal field for the immersion $f$,  and define a \emph{coordinate frame} for $f$ to be a map $F:M\to\SLR$ which satisfies
\bdm 
f_x = \dfrac{\varepsilon_1}{2} e^{\omega/2} \Ad_F (e_0 + e_1), \quad
f_y = \dfrac{\varepsilon_2}{2} e^{\omega/2} \Ad_F (-e_0 + e_1), \quad
N = \Ad_F(e_2),
\edm
where $\varepsilon_1,\varepsilon_2\in\{-1,1\}$, so that $\dd s^2$
is as above with $\varepsilon=\varepsilon_1\varepsilon_2$. Conversely, since $M$ is simply connected, we can always construct a coordinate frame for a timelike conformal immersion $f$.\\

\noindent
The Maurer-Cartan form $\alpha$ for the frame $F$ is defined by
\bdm
\alpha = F^{-1} \dd F = U \dd x + V \dd y = A_1 \dd x + \alpha_0 + A_{-1} \dd y,
\edm
 where $A_{\pm 1}$ are off-diagonal and $\alpha_0$ is a diagonal matrix valued 1-form. Let $\textup{Lie}(X)$ denote the Lie algebra of any group $X$.   We extend $\alpha$ to a  
   $\textup{Lie} (\G)$-valued $1$-form $\hat \alpha$ by inserting the paramater $\lambda$
   as follows:
\bdm
\hat \alpha = A_1 \lambda \dd x + \alpha_0 + A_{-1} \lambda^{-1} \dd y,
\edm
where $\lambda$ is the complex loop parameter.
 The surface $f$ is  of constant mean curvature if and only
 if $\hat \alpha$ satisfies the Maurer-Cartan equation
 $\dd \hat \alpha + \hat \alpha \wedge \hat \alpha = 0$, and one can
then integrate the  equation $\hat F^{-1} \dd \hat F = \hat \alpha$, with $\hat F(0)=I$,
 to obtain the \emph{extended coordinate frame}
  $\hat F: M \to \G$, which is a regular admissible frame.\\

\noindent
It is important to note that the $1$-forms $A_1 \dd x$, $A_{-1} \dd y$ and $\alpha_0$ are well-defined, independently of the choice of (oriented) lightlike coordinates, because any other lightlike coordinate system with the same orientation is 
given by $(\tilde x(x,y), \tilde y(x,y)) = (\tilde x (x), \tilde y (y))$.  This means that 
the extension of $F$ to $\hat F$ does not depend on coordinates. \\

\noindent
One can reconstruct the surface $f$ as follows: define the map 
 $\sym: \Lambda \SLC \to\Lambda\slC$, 
$$
\sym(\hat G)=2\lambda\partial_\lambda\hat G\hat G^{-1}-\Ad_{\hat G}(e_2). 
$$
For any $\lambda_0\neq0$, define $\sym_{\lambda_0}(\hat G)=\sym(\hat G)\big\vert_{\lambda=\lambda_0}$. 
Assume coordinates are chosen such that  $f(p) = 0$ for some point
$p \in M$.  Then $f$ is recovered by the \emph{Sym formula}
$$
f (z) = \frac{1}{2H}\left \{ \sym_1 (\hat F(z)) - \sym_1(\hat F(p))\right \}.\\
$$

\noindent
Conversely, \emph{every} regular admissible frame gives a timelike CMC surface:
first note that a regular admissible frame can be written
 $\hat F^{-1} \dd \hat F = \hat U \dd x + \hat V \dd y$, with
  $$
 \hat U = \bbar a_1 & b_1 \lambda \\ c_1 \lambda & -a_1 \ebar\quad \text{   and   }\quad
 \hat V = \bbar a_2 & b_2 \lambda^{-1} \\ c_2 \lambda^{-1} & -a_2 \ebar.
 $$
 where $c_1$ and $b_2$ are non-zero.

\begin{proposition}  \label{characterizationprop}
Let $\hat F: M \to \G$ be a regular admissible frame and $H\neq0$. 
Set $\varepsilon_1=\sign(c_1)$, $\varepsilon_2=-\sign(b_2)$ and $\varepsilon=\varepsilon_1\varepsilon_2$. Define a Lorentz metric on $M$ by
$$
\dd s^2 = \varepsilon e^\omega \dd x \, \dd y, \quad
\varepsilon e^\omega = -\dfrac{4 c_1 b_2}{H^2}.
$$
Set
$$
f^\lambda =\frac{1}{2H}\sym_{\lambda}(\hat F):M\to\LL^3\qquad(\lambda\in\real\setminus\{0\}).
$$
Then, with respect to the choice of unit normal $N^\lambda = \Ad_{\hat F} e_2$, and the given metric, the surface $f^\lambda$ is a timelike CMC $H$-surface. Set
$$
\rho=\left|\frac{b_2}{c_1}\right|^{\frac{1}{4}},\qquad T=\bbar \rho & 0 \\ 0 & \rho^{-1} \ebar,  
$$
and set $\hat F_C = \hat F T : M \to \G$. Then $\hat F_C$ is the 
extended coordinate frame for the surface $f=f^1$. For general values of $\lambda \in \real \setminus \{0\}$ we have:
\beq\label{signedcoordframe}
\begin{split}
f^{\lambda} _x =&  \frac{ \lambda c_1 \rho^2}{H} \Ad_{F^\lambda _C}(e_1 + e_0)
=
  \frac{\lambda \varepsilon_1 e^{\omega/2}}{2}  \Ad_{F^\lambda _C}(e_1 + e_0) , \\
f^{\lambda} _y =&  \frac{b_2 \rho^{-2}}{\lambda H}
\Ad_{F^\lambda _C} (e_1-e_0) = 
  \frac{\varepsilon_2 e^{\omega/2}}{2 \lambda} \Ad_{F^\lambda _C} (e_1 -e_0), \\
N^{\lambda} =& \Ad_{F^\lambda _C} e_2 = \Ad_{F^\lambda } e_2,
\end{split}
\eeq 
where $N^\lambda$ is the unit normal to $f^\lambda$.\\
\end{proposition}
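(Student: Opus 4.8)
The plan is to verify the claimed formulas by direct computation, starting from the definition of the Sym map and the structure of the admissible frame. The key observation is that $\hat F_C = \hat F T$ where $T$ is a constant-in-$(x,y)$ diagonal gauge, so I need to understand how the Sym formula and the Maurer-Cartan form transform under right-multiplication by $T$, and then match the two expressions given for $f^\lambda_x$ and $f^\lambda_y$.

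First I would compute the Maurer-Cartan form of $\hat F_C$. Since $T$ is constant in $(x,y)$, we have $\hat F_C^{-1} \dd \hat F_C = T^{-1}(\hat F^{-1} \dd \hat F) T = T^{-1}(\hat U \dd x + \hat V \dd y)T$. Conjugating the given $\hat U$ and $\hat V$ by $T = \mathrm{diag}(\rho,\rho^{-1})$ scales the off-diagonal entries: the $(2,1)$-entry $c_1\lambda$ becomes $\rho^{-2}c_1 \lambda$ and the $(1,2)$-entry $b_1\lambda$ becomes $\rho^2 b_1 \lambda$, and similarly for $\hat V$. With $\rho = |b_2/c_1|^{1/4}$, one finds $\rho^{-2}|c_1| = \rho^2|b_2| = \sqrt{|b_2 c_1|}$, so the frame $\hat F_C$ has the balanced form of an extended coordinate frame. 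This is exactly the normalization needed so that the coefficients match $\tfrac{1}{2}\varepsilon_i e^{\omega/2}$ with $\varepsilon e^\omega = -4c_1 b_2/H^2$; identifying $e^{\omega/2} = 2\sqrt{|b_2 c_1|}/|H|$ and checking signs via $\varepsilon_1 = \sign(c_1)$, $\varepsilon_2 = -\sign(b_2)$ gives the second equality in each line of \eqref{signedcoordframe}.

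Next I would establish the first equality in each line by differentiating the Sym formula. Writing $f^\lambda = \tfrac{1}{2H}\sym_\lambda(\hat F)$ and using $\sym(\hat G) = 2\lambda\partial_\lambda \hat G\,\hat G^{-1} - \Ad_{\hat G}(e_2)$, a standard computation (as in the CMC DPW literature) shows that the $(x,y)$-derivatives of $\sym_\lambda(\hat F)$ are controlled by the Maurer-Cartan coefficients: differentiating $2\lambda\partial_\lambda \hat F\,\hat F^{-1}$ in $x$ and using $\hat F_x = \hat F \hat U$ produces the term $2\lambda\,\Ad_{\hat F}(\partial_\lambda \hat U) - [\text{commutator terms that recombine into }\Ad_{\hat F}]$, and the $\lambda$-dependence of $\hat U = \lambda A_1 + \ldots$ means $\lambda\partial_\lambda \hat U = \lambda A_1 \dd x$-part contributes the factor of $\lambda$. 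Carrying this out yields $f^\lambda_x = \tfrac{\lambda}{H}\Ad_{F^\lambda}(A_1\text{-part})$; since the $(2,1)$-entry of $A_1$ is $c_1$ and $e_1 + e_0 = \bbar 0 & 0 \\ 2 & 0\ebar$ has the matching structure, and $\Ad_{F^\lambda_C} = \Ad_{F^\lambda}\Ad_T$ absorbs the $\rho^2$ factor, one obtains precisely $f^\lambda_x = \tfrac{\lambda c_1 \rho^2}{H}\Ad_{F^\lambda_C}(e_1+e_0)$. The $y$-derivative is entirely analogous, using that $\hat V$ carries $\lambda^{-1}$ and $e_1 - e_0 = \bbar 0 & 2 \\ 0 & 0 \ebar$. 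The normal formula $N^\lambda = \Ad_{F^\lambda_C}e_2 = \Ad_{F^\lambda}e_2$ follows because $T$ is diagonal and hence commutes with $e_2 = \mathrm{diag}(-1,1)$, so $\Ad_T e_2 = e_2$.

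\textbf{The main obstacle} will be the bookkeeping in the Sym-formula differentiation: the term $2\lambda\partial_\lambda \hat F\,\hat F^{-1}$ does not transform in a purely tensorial way, so one must carefully track how $\partial_\lambda$ interacts with $\partial_x$ and with the $\Ad_{\hat F}$ conjugation, ensuring the non-$\Ad$-covariant pieces cancel and only the $\Ad_{F^\lambda}$-conjugated Maurer-Cartan term survives. Getting every sign and power of $\lambda$ correct — and confirming that the two independent expressions for each derivative genuinely coincide rather than merely resembling each other — is where the real care is required; the rest is the routine matrix algebra already set up by the definitions of $e_0, e_1, e_2$ and the inner product $\ip{\cdot}{\cdot}_L$.
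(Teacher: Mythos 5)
Your central computation is the right one and is in fact cleaner than you anticipate: the ``non-tensorial'' terms cancel identically, since $\partial_x\bigl(\partial_\lambda\hat F\,\hat F^{-1}\bigr)=(\partial_\lambda\hat F\,\hat U+\hat F\,\partial_\lambda\hat U)\hat F^{-1}-\partial_\lambda\hat F\,\hat U\hat F^{-1}=\Ad_{\hat F}(\partial_\lambda\hat U)$, so that together with $\partial_x\Ad_{\hat F}(e_2)=\Ad_{\hat F}[\hat U,e_2]$ one gets $2H f^\lambda_x=\Ad_{F^\lambda}\bigl(2\lambda A_1-[\hat U,e_2]\bigr)=2\lambda c_1\Ad_{F^\lambda}(e_0+e_1)$, and analogously in $y$; the paper itself states this proposition without proof, as background quoted from \cite{dit2000} and \cite{dbms1}, so direct verification is the natural route. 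However, your very first step is wrong as stated: $T$ is \emph{not} constant in $(x,y)$, because $\rho=|b_2/c_1|^{1/4}$ is built from the Maurer--Cartan coefficients, which are functions on $M$. Hence $\hat F_C^{-1}\dd\hat F_C=T^{-1}(\hat F^{-1}\dd\hat F)T+T^{-1}\dd T$. Your conclusion survives, because $T^{-1}\dd T$ is diagonal and $\lambda$-independent, so it is absorbed into $\alpha_0$ and leaves the off-diagonal $\lambda^{\pm1}$ coefficients untouched, and the identity $\Ad_{F^\lambda_C}=\Ad_{F^\lambda}\Ad_T$ used to absorb the $\rho^{\pm2}$ factors is pointwise and needs no constancy --- but the justification must be repaired. (Your ``checking signs'' step also deserves real care: with $e^{\omega/2}=2\sqrt{|c_1b_2|}/|H|$, matching the two displayed expressions in each line of \eqref{signedcoordframe} literally involves $\sign(H)$, and a careful computation gives $f^\lambda_y=-\tfrac{b_2\rho^{-2}}{\lambda H}\Ad_{F^\lambda_C}(e_1-e_0)$, so the sign bookkeeping is not cosmetic.)

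The more serious gap is that you never verify the constancy of the mean curvature, which is the first assertion of the proposition. The formulas \eqref{signedcoordframe} are purely first-order data: they show $\ip{f^\lambda_x}{f^\lambda_x}_L=\ip{f^\lambda_y}{f^\lambda_y}_L=0$ and $2\ip{f^\lambda_x}{f^\lambda_y}_L=-4c_1b_2/H^2=\varepsilon e^\omega$, so the surface is timelike with the claimed conformal metric and unit normal $N^\lambda$, but ``CMC $H$'' is a second-order statement. You need one more differentiation: since $f^\lambda_x=\tfrac{\lambda c_1}{H}\Ad_{F^\lambda}(e_0+e_1)$, one computes $f^\lambda_{xy}=\tfrac{\lambda(c_1)_y}{H}\Ad_{F^\lambda}(e_0+e_1)+\tfrac{\lambda c_1}{H}\Ad_{F^\lambda}[\hat V,e_0+e_1]$, and since $\ip{e_0+e_1}{e_2}_L=0$ and $\ip{[\hat V,e_0+e_1]}{e_2}_L=-2b_2\lambda^{-1}$, this yields $\ip{f^\lambda_{xy}}{N^\lambda}_L=-2c_1b_2/H$; the mean curvature for the metric $\varepsilon e^\omega\,\dd x\,\dd y$ is then $2\varepsilon e^{-\omega}\ip{f^\lambda_{xy}}{N^\lambda}_L=H$, using $\varepsilon e^\omega=-4c_1b_2/H^2$. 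Without this computation (or an explicit appeal to the Gauss--Codazzi equivalence underlying \cite{dit2000}), the proposal proves the frame identities but not that $f^\lambda$ is a CMC $H$ surface.
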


\subsection{The d'Alembert type construction}\label{dpwsection}
 We now explain how to construct all admissible frames, and thereby all timelike CMC surfaces, from simple data.

\begin{definition} \label{potentialdefn}
Let $I_x \subset \real$ and $I_y \subset \real$ be open sets, with coordinates $x$ and $y$, respectively.   A \emph{potential pair} $(\psi^X, \psi^Y)$ is a pair of smooth 
$\textup{Lie}(\G)$-valued 1-forms on $I_x$ and $I_y$ respectively with Fourier expansions in $\lambda$ as follows:
\begin{eqnarray*}
\psi^X = \sum_{j=-\infty}^1 \psi^X_i \lambda^i \dd x,\qquad \psi^Y = \sum_{j=-1}^\infty \psi^Y_i \lambda^i \dd y.
\end{eqnarray*}
The potential pair is called \emph{regular} at a point $(x,y)$ if $[\psi^X_1]_{21}(x)\neq 0$ and $[\psi^Y_{-1}]_{12}(y)\neq0$, and \emph{semi-regular} if at most one of these functions
vanishes at $(x,y)$, and the zero is of first order. The pair is called regular or semi-regular
if the corresponding property holds at all points in $I_x \times I_y$.\\
\end{definition}

\noindent
The following theorem is a straightforward consequence of Theorem \ref{birkhoff}. 
Note that the potential pair in Item \ref{dpwthmitem1} of the theorem is well defined, independent of the choice of lightlike coordinates:
\begin{theorem} \label{dpwthm}
\begin{enumerate}
\item \label{dpwthmitem1}
Let $M$ be a simply connected subset of $\real^2$ and $\hat F: M \to \B \subset \G$ an admissible frame.  The pointwise (on $M$) Birkhoff decomposition 
\bdm 
\hat F = \hat Y_- \hat H_+ = \hat X_+ \hat H_-,
\edm
 where 
  $\hat Y_-(y) \in \G^-_*$, $\hat X_+(x) \in \G^+_*$,  and  
  $\hat H_\pm(x,y) \in \G^\pm$,
results in a potential pair $(\hat X_+^{-1} \dd \hat X_+ \, , \, \hat Y_-^{-1} \dd \hat Y_-)$,
of the form
$$
\hat X_+^{-1} \dd \hat X_+ = \psi^X_1 \lambda \, \dd x,\qquad
\hat Y_-^{-1} \dd \hat Y_- = \psi^Y_{-1} \lambda^{-1} \, \dd y.\\
$$
\item
Conversely, given any potential pair, $(\psi^X, \psi^Y)$, define $\hat X: I_x \to \G$ and $\hat Y: I_y \to \G$ by  integrating the differential equations
\begin{eqnarray*}
\hat X^{-1} \dd \hat X = \psi^X, & \hat X(x_0) = I,\\
\hat Y^{-1} \dd \hat Y = \psi^Y, & \hat Y(y_0) = I.
\end{eqnarray*}
Define $\hat \Phi=\hat X^{-1} \hat Y : I_x \times I_y \to \G$, and set 
$M = \hat \Phi^{-1}(\B_L)$. Pointwise on $M$, perform the Birkhoff decomposition $\hat \Phi = \hat H_- \hat H_+$, where $\hat H_-: M \to \gggg^-_*$ and $\hat H_+ : M \to \G^+$.
Then $\hat F = \hat Y \hat H_+^{-1}$ is an admissible frame. \\
\item In both items (1) and (2), the admissible frame is
regular if and only if the corresponding potential pair is regular. Moreover,
with notation as in Definitions \ref{admissibleframedef} and  \ref{potentialdefn},
we have $\textup{sign}[A_1]_{21} = \textup{sign}[\psi^X_1]_{21}$ and
$\textup{sign}[A_{-1}]_{12} = \textup{sign}[\psi^Y_{-1}]_{12}$. In fact, we have 
\beq \label{mcpotexpression}
\hat F^{-1}\dd \hat F=\lambda\psi^X_1\dd x+\alpha_0+\lambda^{-1}\hat H_+\big|_{\lambda=0}\psi^Y_{-1}\hat H_+^{-1}\big|_{\lambda=0}\dd y,
\eeq
where $\alpha_0$ is constant in $\lambda$. \\
\end{enumerate}
\end{theorem}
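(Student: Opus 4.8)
All three items run on a single engine: counting powers of the loop parameter $\lambda$ after rewriting the Maurer--Cartan form through the relevant Birkhoff factorization. The facts I use repeatedly are that elements of $\G^+$ (resp.\ $\G^-$) have only non-negative (resp.\ non-positive) $\lambda$-powers, so their logarithmic derivatives and mutual conjugates stay in controlled bands; and that, by the $\sigma$-reality $\gamma(\lambda)=\Ad_P\,\gamma(-\lambda)$, the boundary values $\hat H_+|_{\lambda=0}$ and $\hat H_-|_{\lambda=\infty}$ commute with $P=\textup{diag}(1,-1)$ and are therefore diagonal in $\SLR$.

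For item (1) I would work with the $\B_R$ factorization $\hat F=\hat X_+\hat H_-$ and rewrite the logarithmic derivative of $\hat X_+=\hat F\hat H_-^{-1}$ as $\hat X_+^{-1}\dd\hat X_+=\Ad_{\hat H_-}(\hat F^{-1}\dd\hat F)-\dd\hat H_-\,\hat H_-^{-1}$, where $\hat F^{-1}\dd\hat F=\lambda A_1\dd x+\alpha_0+\lambda^{-1}A_{-1}\dd y$. Since $\hat X_+\in\G^+_*$ the left-hand side has only strictly positive powers of $\lambda$, while $\dd\hat H_-\,\hat H_-^{-1}$ and all non-positive powers of $\Ad_{\hat H_-}(\hat F^{-1}\dd\hat F)$ sit on the $\le 0$ side. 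Projecting onto positive powers therefore leaves only the $\lambda^1$ coefficient of $\Ad_{\hat H_-}(\hat F^{-1}\dd\hat F)$, and because $\hat F^{-1}\dd\hat F$ has highest $\lambda$-power $\lambda^1$ with coefficient $A_1\dd x$ while $\hat H_-$ has highest $\lambda$-power $\lambda^0$, this coefficient is $\Ad_{\hat H_-|_{\lambda=\infty}}A_1\,\lambda\dd x$ — a pure $\dd x$-form. Hence $\partial_y\hat X_+=0$ and $\hat X_+^{-1}\dd\hat X_+=\psi^X_1\lambda\dd x$ with $\psi^X_1=\Ad_{\hat H_-|_{\lambda=\infty}}A_1$ depending on $x$ alone; the $\B_L$ factorization $\hat F=\hat Y_-\hat H_+$ gives, by the mirror count on lowest powers, $\hat Y_-^{-1}\dd\hat Y_-=\psi^Y_{-1}\lambda^{-1}\dd y$ with $\psi^Y_{-1}=\Ad_{\hat H_+|_{\lambda=0}}A_{-1}$ depending on $y$ alone. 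Coordinate-independence is then inherited from that of $A_1\dd x$ and $A_{-1}\dd y$ together with the uniqueness in Theorem \ref{birkhoff}.

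For item (2) I would use that the constructed maps satisfy $\hat X=\hat X(x)$, $\hat Y=\hat Y(y)$ and that on $M=\hat\Phi^{-1}(\B_L)$ the frame admits the two factorizations $\hat F=\hat X\hat H_-=\hat Y\hat H_+^{-1}$, the first obtained from $\hat\Phi=\hat H_-\hat H_+$. Writing $\hat F^{-1}\dd\hat F=\eta_{(x)}\dd x+\eta_{(y)}\dd y$: from $\hat F=\hat X\hat H_-$ one gets $\eta_{(x)}=\Ad_{\hat H_-^{-1}}(\hat X^{-1}\partial_x\hat X)+\hat H_-^{-1}\partial_x\hat H_-$, which has powers $\le 1$; from $\hat F=\hat Y\hat H_+^{-1}$ with $\partial_x\hat Y=0$ one gets $\eta_{(x)}=-\partial_x\hat H_+\,\hat H_+^{-1}$, which has powers $\ge 0$. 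Hence $\eta_{(x)}=\lambda A_1+\alpha_0^x$, and the mirror argument squeezes $\eta_{(y)}$ between powers $-1$ and $0$, giving $\eta_{(y)}=\lambda^{-1}A_{-1}+\alpha_0^y$. Thus $\hat F^{-1}\dd\hat F=\lambda A_1\dd x+\alpha_0+\lambda^{-1}A_{-1}\dd y$ with $\alpha_0=\alpha_0^x\dd x+\alpha_0^y\dd y$ automatically constant in $\lambda$, so $\hat F$ is an admissible frame.

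Item (3) then falls out by reading off the extreme coefficients isolated in item (2). Since $\hat H_-\in\G^-_*$ one has $\hat H_-|_{\lambda=\infty}=I$, so the $\lambda^1$ coefficient of $\eta_{(x)}$ is exactly $\psi^X_1$; the $\lambda^{-1}$ coefficient of $\eta_{(y)}$ is $\hat H_+|_{\lambda=0}\,\psi^Y_{-1}\,\hat H_+^{-1}|_{\lambda=0}$, which is precisely \eqref{mcpotexpression}. For the regularity and sign statements I would invoke the diagonality of the boundary values: conjugation by $\textup{diag}(r,r^{-1})$ scales the $(1,2)$ entry by $r^2>0$ and the $(2,1)$ entry by $r^{-2}>0$, so in both directions $\textup{sign}[A_1]_{21}=\textup{sign}[\psi^X_1]_{21}$ and $\textup{sign}[A_{-1}]_{12}=\textup{sign}[\psi^Y_{-1}]_{12}$, and the relevant entries vanish simultaneously, yielding the equivalence of the regularity conditions. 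The one delicate point throughout is the conjugation bookkeeping: one must check that $\Ad_{\hat H_\pm}$ produces no powers outside the expected band and that the single surviving extreme coefficient is governed solely by the diagonal boundary value $\hat H_+|_{\lambda=0}$ or $\hat H_-|_{\lambda=\infty}$ — this is exactly what forces the single-variable dependence in (1), pins the narrow power band in (2), and produces the sign correspondences in (3).
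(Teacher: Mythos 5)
Your proposal is correct and takes essentially the same route the paper intends when it calls the theorem ``a straightforward consequence of Theorem \ref{birkhoff}'': the standard $\lambda$-power-band comparison of the two Birkhoff factorizations, with the twisting condition forcing $\hat H_+\big|_{\lambda=0}$ and $\hat H_-\big|_{\lambda=\infty}$ to be real diagonal, which yields the single-variable dependence in (1), the admissible form in (2), and the extreme-coefficient identification \eqref{mcpotexpression} with sign preservation in (3). The only caveat is notational, not yours: in item (1), where $\hat F=\hat Y_-\hat H_+$, the $\lambda^{-1}$-coefficient comes out as $\Ad_{\hat H_+^{-1}|_{\lambda=0}}\psi^Y_{-1}$, so \eqref{mcpotexpression} as printed should be read with the $\hat H_+$ of the normalized splitting $\hat\Phi=\hat H_-\hat H_+$ in item (2) --- exactly the context in which you derive it.
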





\section{Frontals and fronts}
\noindent
For the rest of this article we will be interested in timelike CMC surfaces with singularities. An appropriate class of generalized surface is a frontal.
Here we briefly outline some definitions and results from \cite{krsuy} and \cite{fsuy}.\\

\noindent
Let $M$ be a $2$-dimensional manifold.  A map $f: M \to \E^3$, into the three-dimensional
Euclidean space, is called a \emph{frontal} if, on a neighbourhood $U$ of
any point of $M$,
 there exists a unit vector field $\enormal: U \to \SSS^2$, well-defined up to sign, such that $\enormal$ is perpendicular to $\dd f(TM)$ in $\E^3$.   The map
$L = (f, \, [\enormal]): M \to \E^3 \times \real P^2$ is called a 
\emph{Legendrian lift} of $f$. If $L$ is an immersion, then $f$ is called a \emph{front}.  
A point $p \in M$ where a frontal $f$ is not an immersion is called 
a \emph{singular point} of $f$.\\

\noindent
Suppose that the restriction of a frontal $f$, to some open dense set,
  is an immersion, and some Legendrian lift $L$ of  
$f$ is given. Then, around any point in $M$,
 there exists a smooth function $\chi$,
given in local coordinates $(x,y)$
by the Euclidean inner product $\chi=\ip{(f_x\times f_y)}{\enormal}_E$, such that
\bdm
f_x \times f_y = \chi \enormal.
\edm
In this situation, a singular point $p$ is called \emph{non-degenerate}
if $\dd \chi$ does not vanish there, and  the frontal $f$ is called non-degenerate if every singular point is non-degenerate. 
The set of singular points is locally given as the zero set of $\chi$,
and is a smooth curve (in the coordinate domain) around non-degenerate points.
At such a point, $p$, there is a well-defined
direction, that is a non-zero vector $\eta \in T_pM$, unique up to scale,
 such that $\dd f(\eta) = 0$, called the \emph{null direction}.  \\

\subsection{The Euclidean unit normal}   \label{enormalsect}
In order to use the framework above, we need the Euclidean unit
normal to a CMC surface.
The orthonormal basis, $e_0$, $e_1$, $e_2$ for $\LL^3$ satisfy the commutation
relations $[e_0, e_1] = 2 e_2$, $[e_1, e_2] = -2 e_0$ and $[e_2, e_0] = 2 e_1$.
Defining the standard cross product on the vector space $\R^3 = \LL^3$,
with $e_0 \times e_1 = e_2$, $e_1 \times e_2 = e_0$ and $e_2 \times e_0 = e_1$,
we have the formula:
\bdm
A \times B = - \frac{1}{2}\Ad_{e_0} [A, B].\\
\edm

\noindent
From Proposition \ref{characterizationprop}, the coordinate frame for a regular timelike surface associated
to an admissible frame  is
$f_x = \varepsilon_1 e^{\omega/2} \Ad_{F_C} (e_0+e_1)/2$,
$f_y = \varepsilon_2 e^{\omega/2} \Ad_{F_C} (-e_0+e_1)/2$ and
$N = \Ad_{F_C} e_2 = \Ad_F e_2.$
We can use these to compute the cross product
\beq  \label{fxcrossfy}
f_x \times f_y = -\frac{e^\omega}{2} \varepsilon \Ad_{e_0} \Ad_{F_C}(e_2) = -\frac{e^\omega}{2} \varepsilon \Ad_{e_0} N,
\eeq
where $\varepsilon=\varepsilon_1\varepsilon_2$. This formula is valid provided 
the surface is regular, that is, $c_1 \neq 0 \neq b_2$.   However, 
the formula $N= \Ad_F e_2$ is valid everywhere, and gives a smooth vector field
on $M$.  Therefore, we define the \emph{Euclidean unit normal $\enormal$} to $f$ 
to be 
\beq  \label{enormaldef}
\enormal := \frac{\Ad_{e_0} \Ad_F (e_2)}{|| \Ad_F (e_2)||},  
\eeq
where $|| \cdot ||$ is the standard Euclidean norm on the
vector space $\R^3$ representing $\LL^3$. 
At points where the surface is regular, we have
\bdm
\enormal = -\varepsilon \frac{f_x \times f_y}{|| f_x \times f_y ||}.
\edm
For other values of $\lambda \in \real \setminus \{0\}$ one defines the analogue
$\enormal^\lambda$ for $f^\lambda$, by
replacing $F$ with $F^\lambda$.\\

\section{Singularities of class I: On the big cell}  \label{classIsect}
\noindent
We now want to study the singularities occurring on a timelike CMC surface produced from a semi-regular potential pair $(\psi^X, \psi^Y)$, as in  Theorem \ref{dpwthm}.  \\

\noindent
We first consider the case that the map $\hat \Phi = \hat X^{-1} \hat Y$ takes values in the big cell $\mathcal{B}_L$.
In this case, the formula (\ref{enormaldef}) for $\enormal$ shows  that the Euclidean unit normal is never lightlike, regardless of whether 
the surface is immersed or not.  Conversely, we will later show that, for singularities occurring at the big cell boundary, the Euclidean normal is \emph{always} lightlike;
 this is the geometric difference between the two cases, which we will call \emph{class I}
and \emph{class II} respectively. We now consider the generic singularity of the
first case.\\

\noindent
Given a potential pair, $(\psi^X, \psi^Y) = (O_\infty(1) + \psi^X_1 \lambda\, ,\, \psi^Y_{-1}\lambda^{-1} + O(1))$, we can write
\beqas
\psi^X_{1} :=  \bbar 0 & \alpha \\ \beta & 0 \ebar, \quad
\psi^Y_{-1} :=  \bbar 0 & \gamma \\ \delta & 0 \ebar,
\eeqas
where $\alpha$ and $\beta$ are real and depend on $x$ only, and $\gamma$ and $\delta$ are
real and depend on $y$ only. From the converse part of Theorem \ref{dpwthm},
we see that these functions are otherwise completely arbitrary. 
If $\hat \Phi$ takes values in $\mathcal{B}_L$,  the surface
$f = \frac{1}{2H} \sym_1(\hat F)$ will have singularities when either of $\beta$ or $\gamma$ are zero, and is immersed otherwise.  Thus, for a semi-regular potential, for which at 
most one of these is allowed to vanish, and this to first order,
 a singularity occurs at $z_0 = (x_0,y_0)$ if and only if
 $\beta(x_0)=0$, $\frac{\dd \beta}{\dd x} (x_0)\neq 0$ and 
  $\gamma(y_0)\neq 0$
  (or the analogue, switching $y$ with $x$ and $\beta$ with $\gamma$).
For the \emph{generic} case, the  function $\alpha$ is
 also non-zero at $z_0$.\\

\begin{figure}[here]  
\begin{center}
\includegraphics[height=40mm]{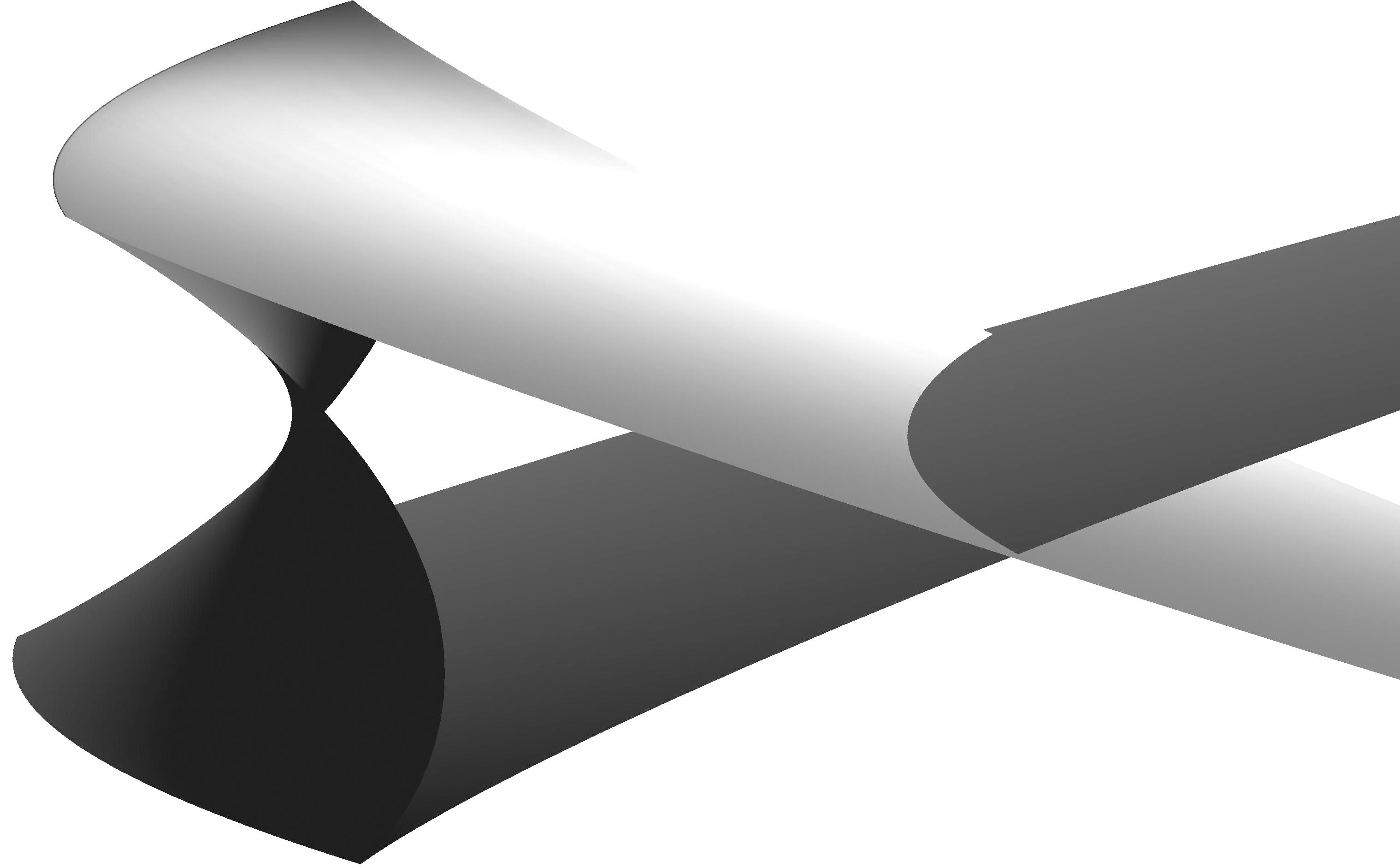} 
\end{center}
\caption{A numerical plot of a timelike CMC surface with  cuspidal edges along
 the two coordinate lines $x=\pm 1$, produced by a pair of potentials
with $\beta = (x-1)(x+1)$ and $\alpha = \gamma = \delta = 1$.}
\label{figure2}
\end{figure}

\noindent
We quote a characterization of the cuspidal edge from Proposition 1.3 in \cite{krsuy}:
\begin{lemma} \label{cuspedgelemma}
 Let $f$ be a front and $p$ a non-degenerate singular point.
 The image of  $f$ in a neighbourhood of
 $p$ is diffeomorphic to a
cuspidal edge if and only if  the null direction
$\eta(p)$ is transverse to the
singular curve.
\end{lemma}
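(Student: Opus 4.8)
The plan is to prove both implications by reducing to the standard model of the cuspidal edge, the germ at the origin of $g(u,v) = (u,v^2,v^3)$, and using that both sides of the asserted equivalence are invariant under diffeomorphisms of source and target. For the model, $g_u = (1,0,0)$ and $g_v = (0,2v,3v^2)$, so the singular set is $\{v=0\}$, its tangent is $\partial_u$, and the kernel of $\dd g$ there is spanned by $\partial_v$; hence the null direction $\eta = \partial_v$ is transverse to the singular curve. Since the singular set $\chi^{-1}(0)$, the singular curve, its tangent, and the null direction $\eta = \ker \dd f$ are all carried to their counterparts by any diffeomorphism, the \emph{only if} direction is immediate: a cuspidal edge forces transversality of $\eta$ to the singular curve.

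For the \emph{if} direction I would work in adapted coordinates near $p$. Since $p$ is non-degenerate, $\chi^{-1}(0)$ is a regular curve, so choose $(u,v)$ with singular set $\{v=0\}$ and singular tangent $\partial_u$. Transversality of $\eta(p)$ is an open condition, so it persists along the curve near $p$; after a shear I may take the null direction to be $\partial_v$ on $\{v=0\}$, i.e. $f_v(u,0)=0$, whence $f_v = v\, g(u,v)$ with $g(u,0)=f_{vv}(u,0)$ by Hadamard's lemma. Non-degeneracy $\dd \chi \neq 0$ then forces $f_u(u,0)$ and $f_{vv}(u,0)$ to be linearly independent (as $\chi$ is, up to a nonvanishing factor, $v\,(f_u\times f_{vv})$ along the curve), while the front hypothesis enters crucially through $\enormal$: because the Legendrian lift $(f,[\enormal])$ is an immersion and $f_v$ vanishes on $\{v=0\}$, the missing rank must come from the normal, so $\enormal_v(u,0)\neq 0$. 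A short computation, differentiating the defining relations $\ip{\enormal}{f_u}_E=\ip{\enormal}{f_v}_E=\ip{\enormal}{\enormal}_E=0$, shows that $\enormal_v$ is perpendicular to both $f_u$ and $\enormal$ along the singular curve, hence parallel to $f_{vv}$; this is exactly what produces a nonzero transverse third-order term $\ip{\enormal}{f_{vvv}}_E = -2\ip{\enormal_v}{f_{vv}}_E \neq 0$, so the cuspidal (rather than deeper) structure is forced.

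Finally I would invoke the recognition criterion underlying Proposition 1.3 of \cite{krsuy}: a front with a non-degenerate singular point in the adapted form above, with $f_u$, $f_{vv}$ independent and $\ip{\enormal}{f_{vvv}}_E\neq 0$, is $A$-equivalent to $(u,v^2,v^3)$. The derivative conditions having been secured from the three hypotheses, the remaining and main obstacle is the explicit construction of the source and target diffeomorphisms realizing this normal form; this is a finite-determinacy argument (via the Malgrange preparation theorem) and is the step I would either carry out or cite from \cite{krsuy} to complete the proof. I would also note in passing that the tangency case, where $\eta$ is \emph{not} transverse to the singular curve, is precisely the one excluded here and is what yields a swallowtail under the appropriate extra derivative condition, which explains why transversality is the correct dividing criterion.
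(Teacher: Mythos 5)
The first thing to note is that the paper contains no proof of this lemma to compare against: Lemma \ref{cuspedgelemma} is quoted verbatim from Proposition 1.3 of \cite{krsuy}, and the normal-form statement underlying it is precisely the content of that cited proposition. Judged on its own terms, your sketch follows the standard route and is essentially sound as a reduction. The \emph{only if} direction, via invariance of the singular set and of $\ker \dd f$ under diffeomorphisms of source and target, is correct, and in the \emph{if} direction your adapted coordinates correctly convert the three hypotheses (front, non-degenerate, transverse null direction) into the derivative conditions: $f_u$ and $f_{vv}$ linearly independent along $\{v=0\}$, and $\ip{\enormal}{f_{vvv}}_E \neq 0$, which together amount to the classical recognition condition $\det(f_u, f_{vv}, f_{vvv}) \neq 0$ at $p$.

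Two local repairs are needed. First, $\ip{\enormal}{\enormal}_E = 1$, not $0$; what you actually use is the consequence $\ip{\enormal_v}{\enormal}_E = 0$. Second, your claim that $\enormal_v$ is parallel to $f_{vv}$ along the singular curve is false in general: $f_{vv}(u,0)$ lies in $\enormal^{\perp}$ (differentiate $\ip{\enormal}{f_v}_E = 0$) but need not be orthogonal to $f_u$, whereas $\enormal_v$ is orthogonal to both $f_u$ and $\enormal$. The correct statement is $\enormal_v \parallel f_u \times \enormal$, and then $\ip{\enormal_v}{f_{vv}}_E \neq 0$ follows because $\enormal_v \neq 0$ (front condition, since $f_v$ vanishes on the curve) and $\ip{f_u \times \enormal}{f_{vv}}_E = -\det(f_u, f_{vv}, \enormal)$ is non-zero by non-degeneracy. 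With that fix, your identity $\ip{\enormal}{f_{vvv}}_E = -2\ip{\enormal_v}{f_{vv}}_E$ does deliver the third-order condition. The remaining step --- that these conditions force $A$-equivalence to $(u, v^2, v^3)$ --- you explicitly defer to a finite-determinacy/Malgrange argument or to \cite{krsuy}; since that normal-form step is the actual content of the cited proposition, your write-up should be understood as a correct (modulo the slips above) reduction to the same source the paper itself quotes, rather than an independent proof, and in that sense it supplies strictly more detail than the paper while resting on the same foundation.
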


\noindent
Now we can describe the  generic singularities of a semi-regular surface on the big cell:
\begin{proposition} 
If the map $\hat \Phi = \hat X^{-1} \hat Y$ corresponding to a semi-regular
potential pair takes values in $\mathcal{B}_L$, then
a generic singularity of the surface $f := \frac{1}{2H} \sym_1(\hat F)$
is a cuspidal edge.
\end{proposition}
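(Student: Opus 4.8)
The plan is to verify the three hypotheses of Lemma \ref{cuspedgelemma} (front, non-degenerate singular point, null direction transverse to the singular curve) at a generic singular point. After interchanging $x$ and $y$ if necessary, let $z_0=(x_0,y_0)$ be a point with $\beta(x_0)=0$, $\frac{\dd\beta}{\dd x}(x_0)\neq 0$, $\gamma(y_0)\neq 0$ and $\alpha(x_0)\neq 0$. The key observation is that, although the coordinate frame $\hat F_C$ of Proposition \ref{characterizationprop} degenerates as $c_1\to 0$ (since $\rho=|b_2/c_1|^{1/4}$ blows up), the admissible frame $\hat F$, and hence $F=F^1$, remains smooth; so the derivatives of $f=\frac{1}{2H}\sym_1(\hat F)$ should be computed directly from the Sym formula rather than from the rescaled expressions in \eqref{signedcoordframe}.

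First I would differentiate the Sym formula. Writing $\hat F^{-1}\dd\hat F=\hat U\,\dd x+\hat V\,\dd y$, a short computation gives $\partial_x\sym_\lambda(\hat F)=\Ad_{\hat F}(2\lambda\partial_\lambda\hat U-[\hat U,e_2])$, and evaluating at $\lambda=1$ yields
\[
f_x=\frac{c_1}{H}\Ad_F(e_0+e_1),
\]
with the analogous expression showing that $f_y$ is a non-zero multiple of $\Ad_F(e_1-e_0)$ near $z_0$. By \eqref{mcpotexpression} we have $c_1=[\psi^X_1]_{21}=\beta$, while $b_2$ has the same sign as $\gamma$ by Theorem \ref{dpwthm}(3) and so is non-zero near $z_0$. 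Since $\Ad_F(e_0+e_1)$ and $\Ad_F(e_1-e_0)$ are linearly independent, $f$ fails to be an immersion exactly where $\beta=0$; by the implicit function theorem this is the curve $\{x=x_0\}$ near $z_0$, with tangent $\partial_y$, and the null direction is $\eta=\partial_x$ since $f_x(z_0)=0\neq f_y(z_0)$. In particular $\eta$ is already transverse to the singular curve.

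Next I would check non-degeneracy. Using the cross-product identity $A\times B=-\frac{1}{2}\Ad_{e_0}[A,B]$ together with $[e_0+e_1,e_1-e_0]=4e_2$, the honest expressions for $f_x,f_y$ give, everywhere on $M$,
\[
f_x\times f_y=\frac{2\beta b_2}{H^2}\Ad_{e_0}\Ad_F(e_2),
\]
in agreement with \eqref{fxcrossfy} where $f$ is regular. Pairing with $\enormal$ from \eqref{enormaldef} and using that $\Ad_{e_0}=\mathrm{diag}(1,-1,-1)$ in the basis $e_0,e_1,e_2$ is a Euclidean isometry, I obtain $\chi=\ip{f_x\times f_y}{\enormal}_E=\frac{2\beta b_2}{H^2}\,\|\Ad_F(e_2)\|$, a non-vanishing smooth multiple of $\beta$. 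Hence $\dd\chi|_{z_0}=(\text{non-zero})\,\frac{\dd\beta}{\dd x}(x_0)\,\dd x\neq 0$, so $z_0$ is non-degenerate and the singular set is precisely the smooth curve $\{x=x_0\}$.

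Finally I would verify that $f$ is a front near $z_0$; this is where the genericity hypothesis $\alpha(x_0)\neq0$ is genuinely used, and I expect it to be the main obstacle. Since $\enormal$ is smooth everywhere and $f$ is an immersion off the singular curve, it suffices to show that the Legendrian lift $L=(f,[\enormal])$ is an immersion along $\{x=x_0\}$ near $z_0$; as $f_x=0$ there, this reduces to $\dd[\enormal](\partial_x)\neq 0$, i.e.\ to $\partial_x(\Ad_F e_2)$ not being a multiple of $\Ad_F(e_2)$. Differentiating $N=\Ad_F(e_2)$ gives $N_x=\Ad_F[U^1,e_2]$, which at $\beta=0$ equals $\alpha\,\Ad_F(e_1-e_0)$; this is independent of $\Ad_F(e_2)$ exactly because $\alpha(x_0)\neq 0$, and since $\alpha$ is constant in $y$ the conclusion persists all along the curve. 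Therefore $L$ is an immersion near $z_0$, so $f$ is a front, and Lemma \ref{cuspedgelemma} applies: the null direction $\partial_x$ is transverse to the singular curve $\{x=x_0\}$, whence the singularity is a cuspidal edge. The case in which $\gamma$ rather than $\beta$ vanishes follows by the symmetry exchanging $x$ and $y$.
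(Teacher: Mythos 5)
Your proposal is correct and follows essentially the same route as the paper: both verify the three hypotheses of Lemma \ref{cuspedgelemma} (front, non-degenerate singular point, null direction $\partial_x$ transverse to the singular curve $\{x=x_0\}$), with the same computation $\chi = \frac{2\beta b_2}{H^2}\|\Ad_F(e_2)\|$ and the same use of $\alpha(x_0)\neq 0$ for the front condition. Your only departures are presentational and, if anything, slightly more careful: you derive $f_x$, $f_y$ by differentiating the Sym formula directly (avoiding the degenerating coordinate frame $\hat F_C$), and you verify that the Legendrian lift $L=(f,\enormal)$ is immersed along the singular curve via $N_x=\alpha\,\Ad_F(e_1-e_0)$, rather than asserting, as the paper does, that $\enormal$ itself is an immersion.
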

\begin{proof}
Clearly $(f, \enormal)$ defines  a frontal, where $\enormal$ is defined by 
equation (\ref{enormaldef}).  Assume now that, at $z_0=(0,0)$,
 we have
 $\beta=0$, $\frac{\dd \beta}{\dd x} \neq 0$ and 
  $\alpha \neq 0$.
 Writing $\enormal = \mu \Ad_{e_0} \Ad_F e_2$, with
 $\mu = || \Ad_F e_2 ||^{-1}$, and examining the off-diagonal 
 components in
\bdm
 \Ad_{F^{-1}} \Ad_{e_0}(\dd \enormal) = \mu [ F^{-1} \dd F, e_2] + \dd \mu \, e_2,
\edm
shows that $\enormal$ is an immersion.
Hence the map $(f, \enormal)$ is regular, and $f$ is a front. \\

\noindent
To show that the singular point is non-degenerate we need to show that 
$\dd \chi(z_0) \neq 0$, where 
\beqas
\chi &=& \ip{(f_x \times f_y)}{\enormal}_E = -\dfrac{e^\omega}{2} \varepsilon \ipb{\Ad_{e_0} \Ad_F (e_2)}{\dfrac{\Ad_{e_0} \Ad_F (e_2)}{||\Ad_F(e_2)||}}_E\\
&=& -\dfrac{\varepsilon e^{\omega}}{2} || \Ad_F(e_2)|| \\
&=& \dfrac{2b_2 c_1}{H^2}|| \Ad_F(e_2)||,
\eeqas
in the notation of Proposition \ref{characterizationprop}. 
Now using the expression (\ref{mcpotexpression}) for $\hat F^{-1} \dd \hat F$,
we observe that $c_1 = \beta$.
Hence we obtain, at $(0,0)$,
\bdm
\dfrac{\partial \chi}{\partial x} = 
 \dfrac{\dd \beta }{\dd x} \dfrac{2 b_2}{H^2} ||\Ad_F(e_2)||.
\edm
This is non-zero, since we assumed that $\frac{\dd \beta }{\dd x} \neq 0$,
and, as mentioned in Theorem \ref{dpwthm},
 $b_2$ vanishes if and only if $\gamma$ vanishes.
 Hence $\dd \chi$ does not vanish at $(0,0)$.\\

\noindent
 According to  Lemma
\ref{cuspedgelemma}, we need to show that
the singular curve is transverse to the null direction.
 In a neighbourhood of $(0,0)$, the singular
curve is given by the equation $x=0$, that is, it is
tangent to  $\partial _y$. 
Finally, since $f_x = 0$ and $f_y\neq 0$ at $(0,0)$, the null
direction at this point is $\eta(0) = \partial _x$.\\ 
\end{proof}

\section{Singularities of class II:  At the big cell boundary}
\noindent
We now turn to singularities that occur due to the failure of 
the loop group splitting at the boundary of the big cell. We again assume that the 
potentials corresponding to the surface are  semi-regular at the points in question.\\

\noindent
We need the Birkhoff decomposition of the whole group $\G^\C$:
\begin{theorem}\cite{PreS,brs} \label{birkhoff2} Every element $\gamma \in\G^\C$ which is not in the left big cell $\mathcal{B}_L$ can be written as a product
\bdm
\gamma = \gamma_- \, \omega \, \gamma_+,
\edm
where $\gamma_\pm\in\G^{\C\pm}$ and the middle term $\omega$ is uniquely determined by $\gamma$ and has the form
\bdm
\omega = \bbar \lambda^{2 n} &  0 \\ 0& \lambda^{-2n} \ebar, \,\,
n \in {\mathbb Z\setminus \{ 0 \}},\,\,\,
\textup{or} \,\,\, 
\omega = \bbar 0 & \lambda^{2n+1} \\ - \lambda^{-(2n+1)} & 0 \ebar,
\,\, n \in {\mathbb Z}.
\edm
The same statement holds replacing $\mathcal{B}_L$ with $\mathcal{B}_R$ and interchanging $\gamma_-$ and $\gamma_+$.\\
\end{theorem}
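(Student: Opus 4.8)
The plan is to derive this twisted, stratified decomposition from the classical (untwisted) Birkhoff factorization of $\Lambda\SLC$ due to Pressley--Segal \cite{PreS}, by passing to the $\sigma$-fixed subgroup $\G^\C$. Recall that in $\Lambda\SLC$ every loop lies in a unique double coset $\Lambda^-\SLC\cdot\mu\cdot\Lambda^+\SLC$, where $\mu=\bbar\lambda^k & 0\\ 0 & \lambda^{-k}\ebar$ with $k\ge 0$, the case $k=0$ being the big cell. First I would check that the involution $\sigma$, namely $\gamma\mapsto\Ad_P\gamma(-\lambda)$, preserves each of $\Lambda^\pm\SLC$ and $\Lambda^\pm_*\SLC$ (since $\lambda\mapsto-\lambda$ fixes $\D^\pm$ and $\Ad_P$ is constant) and permutes the untwisted strata. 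Consequently, for $\gamma\in\G^\C=\mathrm{Fix}(\sigma)$ the ambient stratum index $k$ is a $\sigma$-invariant of $\gamma$, and for $\gamma\notin\B_L$ (here the big cell of $\G^\C$) we have $k\ge 1$.

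The heart of the argument is to upgrade an untwisted factorization $\gamma=g_-\mu g_+$ to one with factors in $\G^{\C\pm}$ and a $\sigma$-fixed middle term. I would produce the $\sigma$-fixed representative first. A direct computation shows that $\mu$ is itself $\sigma$-fixed exactly when $k=2n$ is even; when $k=2n+1$ is odd one has $\sigma(\mu)=-\mu\ne\mu$, and the remedy is to replace $\mu$ by the Weyl translate $\omega=\bbar 0 & \lambda^{2n+1}\\ -\lambda^{-(2n+1)} & 0\ebar$, which lies in the same untwisted double coset and is $\sigma$-fixed (checked directly, using that $\Ad_P$ flips the sign of the off-diagonal entries while $\lambda\mapsto-\lambda$ introduces the compensating sign on odd powers). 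These two cases account for exactly the two families of middle terms in the statement. It then remains to show the outer factors can be chosen $\sigma$-invariant. Applying $\sigma$ to $\gamma=g_-\omega g_+$ yields a second factorization $\gamma=\sigma(g_-)\,\omega\,\sigma(g_+)$ in the \emph{same} cell; the two differ by the stabilizer $S=\{p\in\Lambda^-_*\SLC:\ \omega^{-1}p\,\omega\in\Lambda^+\SLC\}$, which for $k\ge 1$ is a finite-dimensional unipotent group. Since $\omega$ is $\sigma$-fixed, $S$ is $\sigma$-stable, and a standard fixed-point argument for an involution acting on a torsor under a unipotent group produces a $\sigma$-invariant choice, giving $g_\pm\in\G^{\C\pm}$.

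For uniqueness of $\omega$, the underlying untwisted double coset already pins down the integer $k$. What remains is to separate the two $\sigma$-fixed representatives sitting inside a single untwisted cell (the pairs $\pm n$). Here I would use that the finite Weyl reflection $w=\bbar 0 & 1\\ -1 & 0\ebar$, which conjugates one representative to the other inside $\Lambda\SLC$, satisfies $\sigma(w)=-w\ne w$, so $w\notin\G^\C$; hence the two representatives lie in distinct $\G^\C$-double cosets and $\gamma$ determines which occurs. Finally, the $\B_R$ statement follows by applying the $\B_L$ result to $\gamma^{-1}$ and observing that the listed set of middle terms is stable under inversion up to the central factor $-I$, which lies in $\G^{\C+}\cap\G^{\C-}$ and can be absorbed into the outer factors.

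I expect the main obstacle to be the $\sigma$-equivariant descent in the second paragraph: guaranteeing that a $\sigma$-fixed element of an untwisted stratum admits a factorization with \emph{both} outer factors fixed. This is precisely where the non-uniqueness of the untwisted factorization (the stabilizer $S$) must be controlled, and where the fixed-point input is essential; the verification that each untwisted cell contains exactly the claimed $\sigma$-fixed normalizer representatives is a finite but slightly delicate bookkeeping that I would carry out alongside it.
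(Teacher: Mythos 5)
You should first know that the paper itself offers no proof of this theorem: it is quoted from \cite{PreS,brs}, and the actual proof in \cite{brs} does, like you, descend from the untwisted Pressley--Segal decomposition, so your overall plan is the right one. Your normal-form bookkeeping is also correct: $\sigma$ preserves $\Lambda^\pm\SLC$, the diagonal representative $\bbar \lambda^{k} & 0\\ 0 & \lambda^{-k}\ebar$ is $\sigma$-fixed exactly when $k$ is even, and the off-diagonal Weyl translate is the correct $\sigma$-fixed representative for odd $k$. The genuine gap is the $\sigma$-equivariant descent step, and it is not a repairable technicality as you set it up. If you leave the outer factors unnormalized, the set of factorizations of $\gamma$ through a fixed middle term $\omega$ is a torsor not under your $S$ but under $S'=\{p\in\Lambda^-\SLC:\ \omega^{-1}p\,\omega\in\Lambda^+\SLC\}$, which is \emph{not} unipotent: it contains the constant diagonal torus $\mathrm{diag}(t,t^{-1})$, and $H^1(\Z/2,\C^*)$ need not vanish (it is $\Z/2$ for the trivial action), so the fixed-point formalism does not apply. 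If instead you normalize $g_-\in\Lambda^-_*\SLC$, your $S$ is indeed unipotent abelian, but off the big cell the \emph{normalized} torsor through a prescribed representative can be empty, so there is nothing for $\sigma$ to act on.

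A single example shows both failure modes and also why your argument proves too much. Take $\gamma=\omega_2=\bbar \lambda^2 & 0\\ 0 & \lambda^{-2}\ebar\in\G^\C$, and the $\sigma$-fixed representative $\omega_{-2}$, which lies in the same untwisted double coset (conjugate by $w$). Solving $g_-\,\omega_{-2}\,g_+=\omega_2$ in $\Lambda^\pm\SLC$ forces $g_+=\bbar a & b\\ -b^{-1} & 0\ebar$ with $b$ a nonzero constant and $a$ a polynomial of degree at most $4$; the $(2,1)$ entry $-b^{-1}$ is a nonzero even function of $\lambda$, whereas membership in $\G^{\C+}$ requires odd off-diagonal entries, so \emph{no} factorization of $\omega_2$ through $\omega_{-2}$ has $\sigma$-fixed factors (and none of the corresponding $g_-$ satisfies $g_-(\infty)=I$, so your normalized torsor is empty). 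Had your vanishing-$H^1$ step been valid as stated, it would manufacture a twisted factorization of $\omega_2$ through $\omega_{-2}$, contradicting the uniqueness you assert in the next paragraph: the real content of the theorem here is to decide \emph{which} of the two $\sigma$-fixed representatives in a given untwisted cell carries a twisted factorization, and that is a nontrivial parity/obstruction computation (this is where \cite{brs} does the work), not a formal fixed-point lemma. For the same reason your uniqueness paragraph is a non sequitur: $\sigma(w)=-w$ only rules out the obvious intertwiner between $\PP_L^{k}$ and $\PP_L^{-k}$; disjointness of the twisted double cosets requires showing that \emph{no} pair in $\G^{\C-}\times\G^{\C+}$ relates the two representatives, which is exactly the kind of computation displayed above for $\omega_{\pm2}$.
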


\noindent
We write 
\beqas
\omega_k = \bbar \lambda^{ k} & 0 \\ 0 & \lambda^{- k} \ebar\ \ (\text{$k$ even}),\quad \omega_k = \bbar 0 & \lambda^{k} \\ -\lambda^{-k} & 0 \ebar\ \ (\text{$k$ odd}),
\eeqas
and 
$$
\PP_L^k=\{\gamma_-\,\omega_k\,\gamma_+\ |\ \gamma_\pm\in\G^{\C\pm}\}\qquad (k\in\Z).
$$
We note that 
$$
\Ad_{e_0}(\PP_L^k)=\PP_L^{-k}\ \ (\text{$k$ odd}),\qquad \Ad_{e_1}(\PP_L^k)=\PP_L^{-k}\ \ (\text{$k$ even}).\\
$$

\subsection{Behaviour of the surface at $\PP_L^{\pm 1}$ and $\PP_L^{\pm2}$.} 
The behaviour of the surface and its admissible frame at the smaller cells $\PP_L^{\pm1}$ and $\PP_L^{\pm2}$ is explained in the following result. 

\begin{theorem}  \label{firstsmallcellthm}
Let $\hat X: I_x \to \G$ and $\hat Y: I_y \to \G$ be obtained from a real analytic semi-regular potential pair as in Theorem \ref{dpwthm}. Set $M=I_x\times I_y$ and $\hat \Phi = \hat X^{-1} \hat Y$. Suppose that $M^\circ=\hat \Phi^{-1}(\B_L)$ is non-empty. If for some $z_0 = (x_0,y_0) \in M$, $\hat \Phi (z_0) = \omega_j$ for $j=\pm 1$ or $\pm 2$, then 
\begin{enumerate}
\item \label{thmitem1}
$M^\circ$ is open and dense in $M$;\\
\item \label{thmitem2}
 if $j = 1$, then the surface $f^\lambda : M^\circ \to \LL^3$ obtained as $f^\lambda = \frac{1}{2H}\sym_\lambda (\hat F)$, for $\lambda \in \real \setminus \{0\}$, where $\hat F=\hat Y \hat H_+^{-1}$ as in Theorem \ref{dpwthm}, extends continuously to $z_0$, is real analytic in a neighbourhood of $z_0$, but is not immersed at $z_0$; moreover, the Euclidean unit normal is lightlike at $z_0$;\\
\item \label{thmitem3}
if $j=-1$,  or $2$, then $\lim_{z\to z_0} \|f^\lambda \| = \infty$, where the limit is over values $z \in M^\circ$; \\
\item \label{thmitem4}
if $j=-2$ then $\lim_{z\to z_0} \|f^\lambda \|$ may be finite or infinite, depending on the 
sequence $z \to z_0$, but  $f$ is not an immersed timelike surface at $z_0$.\\
\end{enumerate}
\end{theorem}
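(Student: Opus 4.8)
The plan is to reduce everything to a local real-analytic factorization at $z_0$ and then read off the behaviour of the Sym formula, the metric, and the normal from the leading-order degeneration of the Birkhoff factors. I would begin with item (\ref{thmitem1}). Openness of $M^\circ = \hat\Phi^{-1}(\B_L)$ is immediate from continuity of $\hat\Phi$ and openness of $\B_L$ (Theorem \ref{birkhoff}). For density, observe that $\hat\Phi = \hat X^{-1}\hat Y$ is real analytic, since $\hat X,\hat Y$ solve linear ODEs with real-analytic coefficients; membership in $\B_L$ is detected by the non-vanishing of a real-analytic function (the relevant leading Toeplitz/$\tau$-type determinant of the loop), which, because $M$ is a product of intervals and hence connected while $M^\circ\neq\emptyset$, is not identically zero. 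Its zero set $M\setminus M^\circ$ is thus a proper analytic subvariety, hence nowhere dense.

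For the remaining items I would set up the local picture. Since the middle term in Theorem \ref{birkhoff2} is unique, $\PP_L^j$ is a locally closed analytic submanifold, and by hypothesis $\hat\Phi(z_0)=\omega_j$, I would write on a neighbourhood of $z_0$
\[
\hat\Phi(z) = \gamma_-(z)\,\omega_j\,\gamma_+(z), \qquad \gamma_\pm \text{ real analytic}, \quad \gamma_\pm(z_0)=I,
\]
with $\gamma_\pm \in \G^{\C\pm}$. On $M^\circ$ this must agree with the big-cell splitting $\hat\Phi = \hat H_-\hat H_+$ of Theorem \ref{dpwthm}, so that $C := \gamma_-^{-1}\hat H_- \in \G^{\C-}$ and $D := \hat H_+\gamma_+^{-1}\in\G^{\C+}$ give a degenerating factorization $CD = \omega_j$ of the \emph{constant} loop $\omega_j$. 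As $z\to z_0$ the factors $\hat H_\pm$, and with them $C$, $D$ and the value $\hat H_+\big|_{\lambda=0}$, blow up while $\gamma_\pm\to I$.

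The heart of the proof, and the step I expect to be the main obstacle, is to extract the precise leading-order behaviour of this degenerating factorization. I would match Fourier/power-series coefficients in $CD=\omega_j$ for each sign and parity of $j$ to determine the rate and direction of blow-up of $\hat H_+\big|_{\lambda=0}$ in terms of the analytic function cutting out $\PP_L^j$. Substituting into (\ref{mcpotexpression}) controls $A_{-1} = \hat H_+\big|_{\lambda=0}\,\psi^Y_{-1}\,\hat H_+^{-1}\big|_{\lambda=0}$, and hence, via Proposition \ref{characterizationprop}, both the metric coefficient $\varepsilon e^\omega = -4c_1b_2/H^2$ and the frame $F$. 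Feeding the same expansion into $\sym_\lambda(\hat F)$ and using the multiplicativity $\sym(\hat X\hat H_-) = 2\lambda\partial_\lambda\hat X\,\hat X^{-1} + \Ad_{\hat X}\sym(\hat H_-)$ (recall $\hat F=\hat X\hat H_-$), one sees that for $j=1$, the $+$ power, the blow-up of the factors cancels inside $\sym(\hat H_-)$, leaving a real-analytic expression; this is the finiteness and analyticity in item (\ref{thmitem2}), while the accompanying vanishing of $b_2=[A_{-1}]_{12}$ forces $e^\omega\to0$, so $f$ is not immersed at $z_0$. For $j=-1$ and $j=2$ the surviving power makes $\|\sym_\lambda\|\to\infty$, giving item (\ref{thmitem3}); for $j=-2$ the blow-up is anisotropic, so $\lim\|f^\lambda\|$ depends on the approach direction, while the degeneration of the induced metric shows $f$ is not an immersed timelike surface, giving item (\ref{thmitem4}).

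Finally, the assertion in item (\ref{thmitem2}) that $\enormal$ is lightlike at $z_0$ follows from a clean observation that is independent of the delicate expansion. Since $e_0\in\SLR$ and $F\in\SLR$, the maps $\Ad_{e_0}$ and $\Ad_F$ preserve $\ip{\cdot}{\cdot}_L$, and $\ip{e_2}{e_2}_L=1$; hence from (\ref{enormaldef}),
\[
\ip{\enormal}{\enormal}_L = \frac{\ip{\Ad_{e_0}\Ad_F e_2}{\Ad_{e_0}\Ad_F e_2}_L}{\|\Ad_F(e_2)\|^2} = \frac{1}{\|\Ad_F(e_2)\|^2}.
\]
Thus $\enormal$ becomes lightlike in $\LL^3$ precisely when $\|\Ad_F(e_2)\|\to\infty$, i.e. when the $\SLR$-frame $F=\hat Y\hat H_+^{-1}\big|_{\lambda=1}$ blows up; and this blow-up of $F$, forced by that of $\hat H_+$ and confirmed by the leading-order expansion, is exactly what occurs at $\PP_L^1$ even though the surface itself stays finite.
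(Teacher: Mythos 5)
Your item (\ref{thmitem1}) is sound and essentially the paper's argument (the paper invokes the holomorphic section of a line bundle from \cite{DorPW} where you invoke a $\tau$-type determinant), and your Lorentzian-norm computation $\ip{\enormal}{\enormal}_L = \|\Ad_F(e_2)\|^{-2}$ is a correct and clean way to see that $\enormal$ is lightlike exactly when the $\SLR$-frame blows up. But the central setup for items (\ref{thmitem2})--(\ref{thmitem4}) contains a genuine error that kills the rest of the argument: you cannot write $\hat\Phi(z)=\gamma_-(z)\,\omega_j\,\gamma_+(z)$ with $\gamma_\pm\in\G^{\C\pm}$ on a \emph{neighbourhood} of $z_0$. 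By definition, any such product lies in $\PP_L^j$, and by the uniqueness of the middle term in Theorem \ref{birkhoff2} the cells $\PP_L^j$ are disjoint from $\B_L$; since $M^\circ$ is dense, points arbitrarily close to $z_0$ satisfy $\hat\Phi(z)\in\B_L$, so no such factorization exists off the small cell. The same inconsistency surfaces in your derived equation $CD=\omega_j$ with $C\in\G^{\C-}$, $D\in\G^{\C+}$: this equation has \emph{no} solutions whatsoever, since it would place $\omega_j$ in the big cell $\B_L^\C=\G^{\C-}\cdot\G^{\C+}$, contradicting $\omega_j\notin\B_L^\C$ for $j\neq 0$. Consequently the "degenerating factorization" whose Fourier coefficients you propose to match does not exist, and the leading-order expansion that was to drive all of items (\ref{thmitem2})--(\ref{thmitem4}) cannot be set up this way.

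The correct local model, which is how the paper proceeds, is to factorize not $\hat\Phi$ but $\omega_j^{-1}\hat\Phi$: since $\omega_j^{-1}\hat\Phi(z_0)=I\in\B_L$ and $\B_L$ is open, one has a genuine normalized Birkhoff splitting $\omega_j^{-1}\hat\Phi=\hat G_-\hat G_+$ with real-analytic factors on a full neighbourhood $U$ of $z_0$. On the overlap $M^\circ\cap U$ one then compares $\hat H_-\hat H_+=\omega_j\,\hat G_-\hat G_+$ using the explicit "switch" computations (Lemmas \ref{switchlemma} and \ref{switchlemma2}), which re-factorize $\omega_j\hat G_-$ and yield $\hat H_+=D\,\hat U_+\,\hat G_+$ with explicit $D$, $\hat U_+$ whose entries involve $c_{-1}^{-1}$ or $b_{-1}^{-1}$. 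The entire dichotomy then falls out of the Sym formula applied to $\hat F=\hat Y\hat G_+^{-1}\hat U_+^{-1}D^{-1}$ (your multiplicativity identity for $\sym$ is correct and is indeed what gets used): for $j=1$, $\sym_\lambda$ is invariant under right multiplication by $D$ and that particular $\hat U_+$, so the blow-up cancels and $f^\lambda=\frac{1}{2H}\sym_\lambda(\hat Y\hat G_+^{-1})$ extends analytically, with the limiting formulas (\ref{limitingderivatives}) showing $f_x$ and $f_y$ become \emph{parallel} lightlike vectors at $z_0$ (this, rather than your softer "$e^\omega\to 0$" metric argument, is what directly rules out an immersion); for $j=-1$ and $j=2$ the term $\sym_1(\hat U_+^{-1})$ contributes $-2b_{-1}^{-1}\Ad_{Y^\lambda(G_+^\lambda)^{-1}}(e_0+e_1)$, which diverges; and for $j=-2$ the analogous term involves $\Delta=c_{-1}/(d_{-2}c_{-1}-c_{-3})$ with all of $c_{-1},d_{-2},c_{-3}\to 0$, which is why the limit is sequence-dependent. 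Your lightlike-normal observation can then be completed, since the corrected factorization gives $F=Y G_+^{-1}U_+^{-1}D^{-1}\big|_{\lambda=1}$ with $D=\mathrm{diag}(c_{-1},c_{-1}^{-1})$, $c_{-1}\to 0$, forcing $\|\Ad_F(e_2)\|\to\infty$.
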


\begin{remark} In the statement of the theorem, the assumption that the potential pair is real analytic is only used in item (1). By adding (1) as an assumption, (2), (3) and (4) still remain true (replacing real analytic with smooth in (2)) if the potential pair is only assumed smooth. 
\end{remark}

\noindent
To prove the theorem we need two lemmas, both of which are verified by simple algebra.  
\pagebreak
\begin{lemma}\label{switchlemma} Let $ H_- = \bbar a & b \\ c & d \ebar \in \G_-$.  
\begin{enumerate}
\item If $c_{-1}\neq 0$, then $\omega_1H_-\in\B_L$ has a left Birkhoff decomposition
\bdm
\omega_1 \hat H_- = \bbar \lambda c & \lambda d - u_0 \lambda^2 c \\ -\lambda^{-1} a & u_0 a - \lambda^{-1} b \ebar \bbar 1 & u_0 \lambda \\ 0 & 1 \ebar,
\edm
where $u_0 = d_0/c_{-1}$. 
\item If $c_{-1} = 0$, then $\omega_1 \hat H_-\in\PP_L^1$ has a left Birkhoff decomposition
\bdm
\omega_1 \hat H_- = \bbar d & -\lambda^2 c \\ -\lambda^{-2}b & a \ebar \omega_1.
\edm
\item If $b_{-1}\neq 0$ then $\omega_{-1}\hat H_-\in\B_L$ has a left Birkhoff factorization 
\bdm
\omega_{-1} \hat H_-=\bbar \lambda^{-1}c-v_0d & \lambda^{-1}d \\ -\lambda a+\lambda^2bv_0 & -\lambda b \ebar\bbar 1 & 0 \\ \lambda v_0 & 1 \ebar, 
\edm
where $v_0=a_0/b_{-1}$. 
\item If $b_{-1}=0$, then $\omega_{-1}\hat H_-\in\PP_L^{-1}$ has a left Birkhoff decomposition 
$$
\omega_{-1}\hat H_-=\bbar d & -\lambda^{-2}c \\ -\lambda^2b & a \ebar \omega_{-1}.\\
$$
\end{enumerate}
\end{lemma}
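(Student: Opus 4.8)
The plan is to prove all four identities by the same elementary recipe: verify the matrix product directly, then check that the two exhibited factors lie in the claimed loop subgroups by counting powers of $\lambda$. Since the middle term in Theorem \ref{birkhoff2} is uniquely determined, exhibiting a single factorization of the stated shape suffices to pin down the cell.

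First I would record the Fourier structure of the entries of $H_- \in \G^-$. Membership in $\Lambda^-\SLC_\sigma$ forces only non-positive powers of $\lambda$, while the involution $\sigma$ (which makes diagonal entries even and off-diagonal entries odd in $\lambda$) gives, as $\lambda \to \infty$,
\bdm
a = a_0 + O(\lambda^{-2}), \quad d = d_0 + O(\lambda^{-2}), \quad b = b_{-1}\lambda^{-1} + O(\lambda^{-3}), \quad c = c_{-1}\lambda^{-1} + O(\lambda^{-3}).
\edm
These leading coefficients are exactly those entering $u_0 = d_0/c_{-1}$ and $v_0 = a_0/b_{-1}$, and the reality condition $\rho$ makes all Fourier coefficients real, so $u_0, v_0 \in \real$. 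I would also note
\bdm
\omega_1 H_- = \bbar \lambda c & \lambda d \\ -\lambda^{-1} a & -\lambda^{-1} b \ebar, \qquad \omega_{-1} H_- = \bbar \lambda^{-1} c & \lambda^{-1} d \\ -\lambda a & -\lambda b \ebar,
\edm
against which each claimed product is compared; the terms carrying $u_0$ (resp. $v_0$) cancel identically in the multiplication, with no hypothesis used.

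The one substantive point is to confirm that the left factor is a genuine negative loop, i.e. has no strictly positive power of $\lambda$. In each case every entry is manifestly $O(\lambda^0)$ except one. In case (1) the entry $\lambda d - u_0 \lambda^2 c$ has $\lambda^1$-coefficient $d_0 - u_0 c_{-1}$, which vanishes precisely by the choice $u_0 = d_0/c_{-1}$ (and $c_{-1}\neq 0$ is what makes $u_0$ well defined); symmetrically, in case (3) the entry $-\lambda a + \lambda^2 b v_0$ has $\lambda^1$-coefficient $-a_0 + v_0 b_{-1} = 0$. In the degenerate cases (2) and (4) nothing is engineered: the hypothesis $c_{-1} = 0$ (resp. $b_{-1} = 0$) directly drops $\lambda^2 c$ (resp. $\lambda^2 b$) into $\Lambda^-$, which is exactly why the element falls onto $\PP_L^{\pm 1}$ instead of remaining in $\B_L$. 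The right factors in (1),(3) are unipotent, polynomial in $\lambda$, and equal to $I$ at $\lambda = 0$, hence lie in $\G^+$; in (2),(4) the remaining factor is $\omega_{\pm 1}$ itself. Combined with the inherited $\sigma$- and $\rho$-parities and $\det = 1$, this writes $\omega_{\pm 1} H_-$ as an element of $\G^- \cdot \G^+ = \B_L$ in cases (1),(3), and as $\gamma_- \, \omega_{\pm 1}$ with $\gamma_- \in \G^{\C-}$ in cases (2),(4), placing it in $\PP_L^{\pm 1}$.

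The main (and essentially only) obstacle is this positive-power bookkeeping: one must track the two leading Fourier coefficients of each entry and check that the single potentially offending $\lambda^1$ term is removed, either by the definition of $u_0, v_0$ or by the degeneracy hypothesis. Everything else is formal matrix algebra.
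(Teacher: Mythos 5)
Your verification is correct, and it is exactly what the paper intends: the authors state only that both lemmas ``are verified by simple algebra,'' and your direct check---multiplying out the claimed factorizations, using the $\sigma$-twisted Fourier structure $a=a_0+O_\infty(\lambda^{-2})$, $b=b_{-1}\lambda^{-1}+O_\infty(\lambda^{-3})$, etc., and noting that $u_0=d_0/c_{-1}$ (resp.\ $v_0=a_0/b_{-1}$, resp.\ the hypothesis $c_{-1}=0$ or $b_{-1}=0$) kills the single offending $\lambda^1$ coefficient---is precisely that algebra, carried out with the right bookkeeping of parities, reality, and determinant. Your additional remark that uniqueness of the middle term in Theorem \ref{birkhoff2} pins down the cell membership is a correct and welcome touch of completeness; there is no gap.
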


\begin{lemma}\label{switchlemma2} Let $\hat H_-=\bbar a & b \\ c & d \ebar\in\G_-$. Suppose that $b_{-1} \neq 0$ and $a_{-2}b_{-1} - a_0 b_{-3} \neq 0$.
Then $\omega_2\hat H_-\in\B_L$ has a left Birkhoff factorization $\omega_2 \hat H_- = G_-G_+\,$, where 
\beqas
G_+ = \bbar 1 +  \dfrac{a_0 b_{-1}}{a_{-2}b_{-1} - a_0 b_{-3}} \lambda^2 & \dfrac{b_{-1}^2}{a_{-2}b_{-1} - a_0 b_{-3}} \lambda \vspace{1ex} \\
   \dfrac{a_0}{b_{-1}} \lambda & 1 \ebar.\\
\eeqas
\end{lemma}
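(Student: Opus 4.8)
The plan is to exhibit the factor $G_-$ explicitly and verify that it lies in $\G^-$. The key preliminary observation is what the membership $\hat H_- \in \G^-$ entails at the level of Fourier coefficients: holomorphy on $\D^-$ forces each matrix entry to be a Laurent series in non-positive powers of $\lambda$, while the $\sigma$-twisting condition $\sigma \gamma = \gamma$ (diagonal entries even, off-diagonal entries odd in $\lambda$) makes $a$ and $d$ even series starting at $\lambda^0$ and $b$, $c$ odd series starting at $\lambda^{-1}$; the $\rho$-reality makes all Fourier coefficients real. Thus $a_0, a_{-2}$ and $b_{-1}, b_{-3}$ are genuine real Fourier coefficients, with $a = a_0 + a_{-2}\lambda^{-2} + \cdots$ and $b = b_{-1}\lambda^{-1} + b_{-3}\lambda^{-3} + \cdots$, and similarly for $d$ and $c$.

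Since we seek $G_+ \in \G^+$ of the displayed shape, write $G_+ = \bbar 1 + p\lambda^2 & q\lambda \\ r\lambda & 1 \ebar$ with real constants $p, q, r$ to be determined. Provided $p - qr = 0$ one has $\det G_+ = 1$ and hence
\bdm
G_+^{-1} = \bbar 1 & -q\lambda \\ -r\lambda & 1 + p\lambda^2 \ebar.
\edm
The plan is then to set $G_- := \omega_2 \hat H_- G_+^{-1}$, where $\omega_2 = \bbar \lambda^2 & 0 \\ 0 & \lambda^{-2} \ebar$, and to show $G_-$ has only non-positive powers of $\lambda$. Because the factor $\lambda^{-2}$ in $\omega_2$ is carried into the second row, a degree count shows that the entire second row of $G_-$ automatically has top degree $\le 0$, so positive powers can appear only in the first row. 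A direct expansion locates these at $\lambda^2$ in the $(1,1)$-entry and at $\lambda^3$ and $\lambda^1$ in the $(1,2)$-entry (even positive powers in the $(1,2)$-slot are killed by the parity of $b$).

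Requiring these three coefficients to vanish yields precisely the three linear relations $a_0 = r b_{-1}$, $\ p b_{-1} = q a_0$, and $-q a_{-2} + b_{-1} + p b_{-3} = 0$. This is exactly where both hypotheses enter: $b_{-1} \neq 0$ permits the divisions in the first two relations, and the third is solvable precisely when $D := a_{-2} b_{-1} - a_0 b_{-3} \neq 0$. Solving gives uniquely $r = a_0/b_{-1}$, $q = b_{-1}^2/D$ and $p = a_0 b_{-1}/D$, matching the stated $G_+$; a one-line check confirms $p - qr = 0$ for these values, so indeed $\det G_+ = 1$. With these coefficients, $G_-$ has only non-positive powers of $\lambda$.

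It remains to confirm that $G_-$ and $G_+$ genuinely lie in $\G^-$ and $\G^+$. The $\sigma$-parity is immediate, since multiplication by $\omega_2$ and by the graded entries of $G_+^{-1}$ preserves the even/odd pattern, leaving the diagonal of $G_-$ even and the off-diagonal odd; the same holds for $G_+$. The $\rho$-reality is clear because $p, q, r$ are real, and $\det G_- = 1$ follows from $\det(\omega_2 \hat H_-) = \det G_+ = 1$. This simultaneously establishes the factorization $\omega_2 \hat H_- = G_- G_+$ and shows $\omega_2 \hat H_- \in \B_L$. The computation is entirely elementary; the only point demanding care is the bookkeeping of which finitely many Fourier coefficients feed the positive-power terms, together with the reminder that membership in $\G^-$ requires the twisting, reality and determinant conditions, and not merely the vanishing of positive powers.
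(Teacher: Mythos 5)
Your proof is correct: the ansatz $G_+ = \bigl(\begin{smallmatrix} 1+p\lambda^2 & q\lambda \\ r\lambda & 1 \end{smallmatrix}\bigr)$, the parity/degree bookkeeping locating the only possible positive-power coefficients (at $\lambda^2$ in the $(1,1)$-entry and $\lambda^3$, $\lambda$ in the $(1,2)$-entry), the resulting linear system with solution $r=a_0/b_{-1}$, $q=b_{-1}^2/D$, $p=a_0b_{-1}/D$, and the checks of twisting, reality and $\det=1$ all hold up. This is exactly the route the paper intends --- it states only that the lemma is ``verified by simple algebra,'' and your computation is that verification, with the hypotheses $b_{-1}\neq 0$ and $a_{-2}b_{-1}-a_0b_{-3}\neq 0$ entering precisely where you say they do.
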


\begin{proof}[Proof of Theorem \ref{firstsmallcellthm}]
\emph{Item (\ref{thmitem1}):} The big cell $\mathcal{B}_L$ is the complement of the zero set of a holomorphic section in a line bundle over the complex loop group \cite{DorPW}.  Thus $M^\circ$ is the complement of the zero set of a real analytic section of the pull-back of this bundle by $\hat \Phi$. Since we have assumed that this set is non-empty, it must be open and dense. \\

\noindent
\emph{Item (\ref{thmitem2}):}
On $M^\circ$, we perform a left normalized Birkhoff decomposition $\hat\Phi=\hat H_-\hat H_+$. Since $\omega_1^{-1}\hat\Phi$ takes values in $\B_L$ in a neighbourhood $U$ of $z_0$, we perform a left normalized Birkhoff decomposition $\omega_1^{-1} \hat \Phi=\hat G_-\hat G_+$ in $U$. Thus, in $M^\circ\cap U$, we have $\hat H_-\hat H_+=\omega_1\hat G_-G_+$. Applying Lemma \ref{switchlemma} to 
$$
\hat G_-=\bbar a & b \\ c & d \ebar,
$$
gives 
$$
\hat H_- \hat H_+=\omega_1 \hat G_- \hat G_+=\bbar \lambda c & \lambda d- \lambda^2 \dfrac{c}{c_{-1}} \\ -\lambda^{-1}a & \dfrac{a}{c_{-1}} -\lambda^{-1}b \ebar \bbar 1 & \dfrac{1}{c_{-1}}\lambda \\ 0 & 1 \ebar \hat G_+.
$$
By uniqueness of the normalized Birkhoff factorization, we see that 
$$
\hat H_-= \omega_1 \hat G_- \hat U_+^{-1} D^{-1}, \quad \quad
\hat H_+ = D \hat U_+ \hat G_+,
$$
where 
$$
\hat U_+=\bbar 1 & \dfrac{1}{c_{-1}}\lambda \\ 0 & 1 \ebar,\qquad D=\bbar c_{-1} & 0 \\ 0 & \dfrac{1}{c_{-1}} \ebar.
$$
Then 
\beq   \label{2sym}
\sym_\lambda (\hat F)=\sym_\lambda (\hat Y \hat H_+^{-1})= \sym_\lambda (\hat Y\hat G_+^{-1}\hat U_+^{-1}D^{-1})=\sym_\lambda(\hat Y\hat G_+^{-1}),
\eeq
 because  $\sym_\lambda$ is invariant under postmultiplication by matrices of the form $\hat U_+$ and $D$ . Setting $\tilde F:= \hat Y \hat G_+^{-1}$, which  is well defined and analytic in a neighbourhood of $z_0$, we have just shown that
  $\sym_\lambda(\tilde F) = \sym_\lambda(\hat F)$ on the intersection of their domains of definition. Hence  $f^\lambda$ is well defined and analytic around $z_0$.\\

\noindent
To see that $f^\lambda$ is not immersed at $z_0$, we have by Theorem \ref{dpwthm}, 
$$
\hat F^{-1}\dd\hat F=\lambda\psi^X_1 \dd x+\alpha_0+\hat H_+\big|_{\lambda=0} \, \psi^Y_{-1}\hat H_+^{-1} \big|_{\lambda=0} \, \lambda^{-1} \dd y.
$$
We can write $\hat G_+=\textup{diag}(A_0 \, , \, A_0^{-1}) + O(\lambda)$.  Then
$\hat H_+ = \textup{diag}( c_{-1}A_0 \, , \,   (c_{-1}A_0)^{-1}) + O(\lambda)$. 
Hence, if 
\bdm
\psi^X_1=\begin{pmatrix} 0 & \beta \\ \gamma & 0 \end{pmatrix},\quad \psi^Y_{-1}=\begin{pmatrix} 0 & \delta \\ \sigma & 0 \end{pmatrix},
\edm
then
$$
\hat H_+\big|_{\lambda=0} \psi^Y_{-1}\hat H_+^{-1} \big|_{\lambda=0}=\begin{pmatrix} 0 & (c_{-1}A_0)^2\delta \\ \dfrac{\sigma}{(c_{-1}A_0)^2} & 0 \end{pmatrix}.
$$
As the potential is semi-regular, $\gamma$ and $\delta$ do not vanish simultaneously,
and their zeros are of first order, and therefore isolated.  At points where
these functions are non-zero, we 
set, as in Proposition \ref{characterizationprop}, 
$$
\rho=\bigg|\dfrac{(c_{-1}A_0)^2\delta}{\gamma}\bigg|^{1/4}\ \text{  and  }\ T=\begin{pmatrix} \rho & 0 \\ 0 & \rho^{-1} \end{pmatrix}
$$
and $\hat F_C=\hat FT$. We have $\hat F=\hat Y\hat H_+^{-1}=\hat Y\hat G_+^{-1}\hat U_+^{-1}D^{-1}$, hence, by the formulae (\ref{signedcoordframe}),
\beqas
f^\lambda _x &=& \dfrac{\lambda  \gamma\rho^2}{H}\Ad_{F^\lambda _C}(e_0+e_1)= 
\dfrac{\lambda  \gamma}{H}\Ad_F^\lambda (e_0+e_1)\\
&=&
 \dfrac{ \lambda \gamma}{H}\Ad_{Y^\lambda (G^\lambda)_+^{-1}}((c_1^2+1)e_0+(c_{-1}^2-1)e_1-2c_{-1}e_2).
\eeqas
The last expression is well-defined and smooth, even at a point where $\gamma$ or $\delta$ vanishes,
and therefore valid everywhere. Similarly,
$$
f^\lambda _y=  \dfrac{(c_{-1}A_0)^2\delta\rho^{-2}}{\lambda H}\Ad_{F ^\lambda _C}(e_0 - e_1)
  = \dfrac{A_0^2\delta}{\lambda H}\Ad_{Y^\lambda(G^\lambda _+)^{-1}}(e_0 - e_1).
$$
As $A_0\to 1$ and $c_{-1}\to 0$ when $z\to z_0$, we have
\beq \label{limitingderivatives}
\begin{split}
f^\lambda _x(z_0)=  \frac{ \lambda\gamma}{H}\Ad_{Y^\lambda(G^\lambda_+)^{-1}}(e_0 - e_1),\\
f^\lambda _y(z_0)=  \frac{\delta}{\lambda H}\Ad_{Y^\lambda(G_+^\lambda)^{-1}}(e_0 -e_1).
\end{split}
\eeq
Thus we have proved that $f^\lambda$ is not immersed at $z_0$. \\

\noindent 
To see that the Euclidean normal is lightlike, 
see the explicit formula given below in Lemma \ref{enormallemma}.
Alternatively,
one can first show that the Minkowski unit normal  $N^\lambda$ blows up (and therefore is asymptotically lightlike) by considering the surface
$f^\lambda _P$ obtained from $\Ad_{e_1}\hat F$, which (one computes from the Sym formula) 
is the parallel surface to $f^\lambda$. Since 
$\Ad_{e_1} \hat \Phi (z_0) = \omega_{-1}$, we show below that $f^\lambda _P$ blows up, and 
therefore so does $N^\lambda$.  Hence the formula $(\ref{enormaldef})$ for the Euclidean
normal shows that $\enormal^\lambda$ is lightlike at $z_0$. 
\\

\noindent
\emph{Item (\ref{thmitem3}):} As in item (\ref{thmitem2}), we write $\omega_{-1}^{-1} \hat \Phi=\hat G_-\hat G_+$ in a neighbourhood $U$ of $z_0$, and $\hat \Phi=H_-H_+$ in $U\cap M^\circ$. Again denoting the components of $\hat G_-$ by 
$a$, $b$, $c$ and $d$, 
 Lemma \ref{switchlemma} says that $\hat H_+=D\hat U_+\hat G_+$, where now 
$$
D=\bbar -b_{-1}^{-1} & 0 \\ 0 & -b_{-1} \ebar,\qquad \hat U_+=\bbar 1 & 0 \\ \lambda b_{-1}^{-1} & 1 \ebar.
$$
Hence 
$$
2Hf^\lambda =\sym_\lambda(\hat Y\hat G_+^{-1}\hat U_+^{-1})
  =\Ad_{Y^\lambda(G^\lambda_+)^{-1}}(\sym_1(\hat U_+^{-1}))+\sym_1(\hat Y\hat G_+^{-1}).
$$
Since $\hat Y\hat G_+^{-1}$ are well defined and real analytic in $U$, the second term is finite in $U$, while the first term is given by 
$$
\Ad_{Y^\lambda(G^\lambda _+)^{-1}}(\sym_1(\hat U_+^{-1}))=-2b_{-1}^{-1}\Ad_{Y^\lambda(G^\lambda_+)^{-1}}(e_0+e_1)-\Ad_{Y^\lambda(G^\lambda_+)^{-1}}e_2.
$$
The second term is finite in $U$, while the first term goes to infinity as $z\to z_0$, since $b_{-1}\to 0$ in this case. This proves item (\ref{thmitem3}) for $j=-1$. \\

\noindent
If $\hat\Phi(z_0)=\omega_2$, we proceed as in the case just described, choosing a suitable neighbourhood $U$ of $z_0$ and write $\omega_2^{-1}\hat\Phi=\omega_2^{-1}\hat H_-\hat H_+=\hat G_-\hat G_+$ on $U\cap M^\circ$. Using the 
same notation for the components of 
$\hat G_-$,
we have by Lemma \ref{switchlemma2}, $\hat H_+=D\hat U_+\hat G_+$, where $D$ is a diagonal matrix constant in $\lambda$, and 
$$
\hat U_+=\bbar 1 +  \dfrac{b_{-1}}{a_{-2}b_{-1} - b_{-3}} \lambda^2 & \dfrac{b_{-1}^2}{a_{-2}b_{-1} - b_{-3}} \lambda \\ \dfrac{1}{b_{-1}} \lambda & 1 \ebar.
$$
We have 
$$
2Hf^\lambda=\sym_1(\hat Y\hat G_+^{-1}\hat U_+^{-1}D^{-1})=\Ad_{Y^\lambda(G^\lambda_+)^{-1}}\left(\sym_1(\hat U_+^{-1}))+\sym_1(\hat Y\hat G_+^{-1})\right).
$$
The second term is finite, while the first term is given by 
$$
\Ad_{Y^\lambda(G^\lambda_+)^{-1}}\sym_1(\hat U_+^{-1}))=-2b_{-1}^{-1}\Ad_{Y^\lambda(G^\lambda_+)^{-1}}(e_0+e_1)-\Ad_{Y^\lambda(G^\lambda_+)^{-1}}e_2,
$$
and the conclusion follows as in the case when $j=-1$.  \\

\emph{Item (\ref{thmitem4}):}
The case when $j=-2$ can be computed in an analogous way to $j=2$. Instead of the equation above,
one is led to:
$$
\Ad_{Y^\lambda(G^\lambda_+)^{-1}}\sym_1(\hat U_+^{-1}))= 
\Ad_{Y^\lambda(G^\lambda_+)^{-1}} \bbar  4\Delta \lambda^2 & 4 \Delta c_{-1}^{-1} \lambda^3 \\
     -4 c_{-1} \Delta \lambda & -4 \Delta \lambda^2 \ebar 
   -\Ad_{Y^\lambda(G^\lambda_+)^{-1}}e_2,
$$
where $\Delta = c_{-1}/(d_{-2} c_{-1} - c_{-3})$, and the functions $c_{-1}$, $d_{-2}$ and 
$c_{-3}$ all approach zero as $z\to z_0$.  Since it is possible to choose sequences such that
the right hand side of the above equation is either finite or infinite as $z\to z_0$, we can say nothing about this limit.  If the limit is finite, we can deduce that the map $f$ is not an
immersion as follows:  by the same argument described above for $j=1$, namely considering
the surface $f_P^\lambda$, which blows up, since $f_P^\lambda(z_0) \in \mathcal{P}_L^{2}$,
one deduces that the Minkowski normal must be lightlike at $z_0$. This cannot happen on
an immersed timelike surface.\\
\end{proof}

\noindent
Note that generic singularities should not occur at points in $\mathcal{P}_L^j$ for $|j|>1$,
because the codimension of the small cells in the loop group increases with $|j|$.
In view of the previous theorem, and with the aim of studying surfaces with finite, generic singularities,  we make the following definition:  
\begin{definition} A \emph{generalized timelike CMC $H$ surface} is a smooth map
$f: \Sigma \to \LL^3$,  from an oriented surface $\Sigma$,
such that,
at every point $z_0$ in $\Sigma$,  the following holds: there exists a neighbourhood $U$ 
of $z_0$ such that the restriction $f\big|_U$ can be represented by a 
semi-regular
 potential pair $(\psi^X, \psi^Y)$, where the corresponding map $\hat \Phi = \hat X^{-1} \hat Y$
maps $U$ into $\mathcal{B}_L \cup \mathcal{P}_L^1$, and where $\hat \Phi^{-1}(\mathcal{B}_L)$ is open and dense in $U$. If the potential pair is regular,
the surface is called \emph{weakly regular}.
\end{definition}
Note that if  $f$ is weakly regular, that is, represented by a \emph{regular} potential 
pair at each point, then $f$ is immersed precisely at those points 
for which the corresponding map $\hat \Phi$ maps into the big cell $\mathcal{B}_L$.
In other words, there is a well defined open dense set $\Sigma^\circ$ on which 
$f$ is an immersion and $f$ will have singularities precisely at points which map into
$\mathcal{P}_L^1$.\\

\section{Prescribing class II singularities 
of non-characteristic type}   \label{noncharsect}
\noindent
We have seen that the Euclidean unit normal $\enormal$ is well defined at a singularity
occurring on the big cell.  Below we will show that this is also the case for those at
the big cell boundary.  Then we have seen in the previous sections that singularities 
in the two cases can be distinguished by the property that $\enormal$ is not lightlike in
the first case, and is lightlike in the second case, 
which we have already named class I and class II respectively. \\
\noindent
Constructing surfaces with a prescribed singular curve of the class I is
 simple: it is a matter of solving
the geometric Cauchy problem for the characteristic case (see \cite{dbms1}),
which has infinitely many solutions, and choosing the second potential to be
non-regular at the point in question.    Therefore, we henceforth discuss
 only singularities of class II.

\subsection{Singular potentials}  \label{singpotsection}
Assume now that we are at a non-degenerate singular point $p = f(0,0)$,
so that the pre-image of the singular set in a neighbourhood of $p$ is given
by some curve $\Gamma: (\alpha , \beta) \to M$.  Assume
that $\Gamma$ is never parallel to a lightlike coordinate line $y= \textup{constant}$
or $x=\textup{constant}$, which means that the singular curve is non-characteristic
for the associated PDE. The characteristic case will be discussed in the next section.\\

 \noindent
 With the non-characteristic assumption, one
 can express $\Gamma$ as a graph, 
$y=h(x)$, with $h^\prime(x)$ non-vanishing, and, after a
change of coordinates $(\tilde x, \tilde y) = (h(x), y)$, which are still
lightlike coordinates for the regular part of the surface, one can even assume
that $\Gamma$ is given by $y=x$, which is to say $u=0$ in the coordinates
$$
u = \frac{1}{2}\left(x-y\right), \hspace{1cm} v= \frac{1}{2}\left(x+y\right).
$$
Note that we could distinguish the cases $h^\prime > 0$ and $h^\prime <0$,
which corresponds
to the curve being spacelike/timelike in the coordinate domain, but nothing
fundamentally new is gained by doing this.\\

\noindent
The issues discussed below are local in nature, and therefore
 we assume that our parameter space is a square, $M = J \times J \subset \real^2$, where $J$ is an open interval containing $0$.
In these coordinates, along the line $y=x=v$ we have, by definition of 
$\mathcal{P}_L^1$,
\bdm
\hat \Phi(v) = \hat X^{-1}(v) \hat Y(v)= \hat G_-(v) \, \omega_1 \, \hat G_+(v),
\edm
with $\hat G_-(v) \in \gggg^-$ and $\hat G_+(v) \in \gggg^+$.
It is also easy to show, using the expressions in Lemma \ref{switchlemma}, that
if $\hat \Phi$ is smooth then $\hat G_-$ and $\hat G_+$ can also be chosen to be smooth.
We can replace the map $\hat X(x)$ by $\hat X(x) \hat G_-(x)$,
 and $\hat Y(y)$ by $\hat Y(y) \hat G_+^{-1}(y)$, which correspond to the 
standard potential pair 
\beqas
\psi^X= \hat G_-^{-1}(\hat X^{-1} \dd \hat X) \hat G_- + \hat G_-^{-1}\dd \hat G_-,\\
\psi^Y = \hat G_+(\hat Y^{-1} \dd \hat Y) \hat G_+^{-1} + \hat G_+ \dd \hat G_+^{-1},
\eeqas
and it is simple to check that the surface constructed from these potentials is 
 the same as the original surface. Thus one can, in fact, assume that 
\bdm
\hat \Phi(v) = \hat X^{-1}(v) \hat Y(v)=  \omega_1.\\
\edm

\noindent
Finally, choosing a normalization point $z_0= (0,0)$ on the singular set, one can also 
assume that 
\bdm
\hat X(z_0) = \omega_1^{-1}, \quad \hat Y(z_0) = I.
\edm
This is achieved by premultiplying both $\hat Y$ and $\hat X$ by $\hat Y^{-1}(z_0)$.
This leaves $\hat \Phi$ unchanged, and alters the surface 
$f = (1/2H)\sym_1(\hat Y \hat H_+^{-1})$ of Theorem \ref{dpwthm} only by an isometry
consisting of conjugation  by $Y^{-1}(z_0)$ plus a translation.\\

\noindent
 As shown by equation (\ref{2sym}) in
  Theorem \ref{firstsmallcellthm}, we can equivalently
consider the map $\tilde \Phi := \omega_1^{-1} \hat \Phi$, which is the same as replacing $\hat X$ by
$\tilde X := \hat X \omega_1$.  Therefore, we first look at the Maurer-Cartan form
of $\tilde X$, given that $\hat X^{-1} \dd \hat X$ is a standard potential of the form:
\bdm
\bbar \alpha_0 +O_\infty(\lambda^{-2}) & \beta_1 \lambda + \beta_{-1} \lambda^{-1} \beta_{-3} \lambda^{-3} + O_\infty(\lambda^{-5}) \\ \gamma_1 \lambda + \gamma_{-1} \lambda^{-1} + \gamma_{-3} \lambda^{-3} + O_\infty(\lambda^{-5}) & -\alpha_0 + O_\infty(\lambda^{-2}) \ebar \dd x.
\edm
Then
\beqas 
\tilde X^{-1} \dd \tilde X &=& \omega_1^{-1} (\hat X^{-1} \dd \hat X^{-1}) \omega_1 \\
&=& \bbar -\alpha_0  & -\gamma_1 \lambda^3 - \gamma_{-1} \lambda -\gamma_{-3} \lambda^{-1}  \\ -\beta_1 \lambda^{-1}  & \alpha_0 \ebar \dd x + O_\infty(\lambda^{-2}).
\eeqas
Now we observe that, since $\tilde X^{-1} (v) Y(v) = I$ for all $v$,
we actually have 
\bdm
\tilde X(v) = Y(v)
\edm
for all $v$.  It follows that, along $y=x$, we have $\tilde X^{-1} \dd \tilde X = \psi^Y$,
which was assumed to be a standard potential, and so all the terms of order $-2$ or lower
in $\lambda$ are zero. \\

\begin{definition}  \label{singpotdef}
A \emph{singular potential} on an open interval $J \subset \real$,
 is a  $\textup{Lie}(\mathcal{G})$-valued 1-form
$\tilde \psi$ on $J$ which has the
Fourier expansion  in $\lambda$:
\bdm
\tilde \psi =\bbar -\alpha_0  & -\gamma_1 \lambda^3 - \gamma_{-1} \lambda -\gamma_{-3} \lambda^{-1}  \\ -\beta_1 \lambda^{-1}  & \alpha_0 \ebar \dd v =: A(v) \dd v.
\edm
Any zeros of $\gamma_1$ and $\gamma_{-3}$ are of at most first order.
The potential is \emph{regular} at points where $\gamma_1$ and $\gamma_{-3}$ do  not vanish. 
The potential is \emph{non-degenerate} at points where $\beta_1$ does not vanish.
\end{definition}

\noindent
We have seen by the above argument that a timelike CMC surface that has
a non-degenerate singular point gives us a singular potential $\tilde \psi(v) = A(v) \dd v$, and moreover is
reconstructed, up to an isometry of the ambient space,
by integrating $\tilde X^{-1} \dd \tilde X (x) =  A(x) \dd x$
and $\hat Y^{-1} \dd \hat Y (y) = A (y) \dd y$, both with initial condition the
identity, Birkhoff splitting $\tilde \Phi = \tilde X^{-1} \hat Y = \hat G_- \hat G_+$ and setting 
$f = (1/2H)\sym_1 (\hat Y \hat G_+^{-1})$.
Conversely, we have the following:
\begin{proposition} \label{singdpwprop}
Let $\tilde \psi(v)$ be a singular potential which is 
non-degenerate along $J$. Integrate 
$\tilde X^{-1} \dd \tilde X  = \tilde \psi$, 
with initial condition the identity, to obtain a map,
$\tilde X : J \to \mathcal{G}$. 
Define  $\tilde \Phi: J \times J \to \mathcal{G}$ by
$$
\tilde \Phi(x,y) := \tilde X^{-1}(x) \hat Y(y), \quad \hat Y(y) := \tilde X(y).
$$
Let $\lambda \in \real \setminus \{ 0 \}$.
\begin{enumerate}
\item \label{singdpwitem1}
Set $\hat \Phi := \omega_1 \tilde \Phi$.
Then the  set $\Sigma^\circ := \hat \Phi ^{-1}(\mathcal{B}_L)$ is non-empty. 
 The map $f^\lambda: \Sigma^\circ \to \LL^3$ obtained from $\hat \Phi$  as in 
Theorem \ref{dpwthm} is a 
timelike CMC surface, regular at points where $\tilde \psi$ is regular. \\

\item \label{singdpwitem2}
Let $\Delta:= \{ (x,x) ~|~ x \in J\} \subset J \times J$. 
The set $\Sigma_s := \Sigma^\circ \cup \Delta$ is open in $J \times J$,
and the map $f^\lambda$ extends to a map $\Sigma_s \to \LL^3$ as follows: 
Set $U := \Sigma_s \cap \tilde \Phi ^{-1} (\mathcal{B}_L)$,
 which is an open set containing $\Delta$.
On $U$ perform the pointwise left normalized Birkhoff factorization
$\tilde \Phi = \hat G_- \hat G_+$. Set 
\bdm
f^\lambda := \sym_\lambda(\hat Y \hat G_+^{-1}).
\edm
The extended map $f^\lambda: \Sigma_s \to \LL^3$ is a  generalized
 timelike CMC $H$ surface.
Moreover  $\Delta$ is contained in the singular set, and is
equal to the singular set if the potential is regular.\\

\item  \label{singdpwitem3}
Along $\Delta$, we have the expressions
\beq  \label{fxfyformulae}
f^\lambda_x = \lambda \dfrac{\gamma_1}{H} \Ad_{Y^\lambda} (e_0-e_1), \quad
f^\lambda_y = - \dfrac{1}{\lambda} \dfrac{\gamma_{-3}}{H} \Ad_{Y^\lambda}  (e_0-e_1).\\
\eeq
\end{enumerate}
\end{proposition}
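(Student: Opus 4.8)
The organizing observation is that, because $\hat Y = \tilde X$, we have $\tilde \Phi(v,v) = \tilde X^{-1}(v)\hat Y(v) = I$ along the diagonal $\Delta$, so that $\hat \Phi = \omega_1 \tilde \Phi$ equals $\omega_1 \in \PP_L^1$ on $\Delta$, placing the diagonal exactly on the first small cell. The plan is to transport everything back to Theorems \ref{dpwthm} and \ref{firstsmallcellthm} by setting $\hat X := \tilde X \omega_1^{-1}$, so that $\hat \Phi = \hat X^{-1}\hat Y$ and, since $\omega_1$ is $x$-independent, the associated potential pair is $(\psi^X, \psi^Y) = (\Ad_{\omega_1}\tilde \psi, \tilde \psi)$. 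Computing $\Ad_{\omega_1}$ exactly as in the discussion preceding Definition \ref{singpotdef} shows that $\psi^X$ has Fourier support in powers of $\lambda$ at most $1$ with $[\psi^X_1]_{21} = \gamma_1$, while $\psi^Y = \tilde \psi$ has support in powers at least $-1$ with $[\psi^Y_{-1}]_{12} = -\gamma_{-3}$. Thus $(\psi^X,\psi^Y)$ is a genuine potential pair whose regularity is governed precisely by $\gamma_1$ and $\gamma_{-3}$, matching Definition \ref{singpotdef}. Once $\Sigma^\circ$ is known to be non-empty, Theorem \ref{dpwthm} together with Proposition \ref{characterizationprop} immediately gives that $f^\lambda$ is a timelike CMC $H$-surface on $\Sigma^\circ$, immersed wherever $\gamma_1 \gamma_{-3}\neq 0$, i.e.\ wherever $\tilde\psi$ is regular, which is item (\ref{singdpwitem1}).

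The single computation driving items (\ref{singdpwitem1}) and (\ref{singdpwitem2}) is the transverse linearization of the Birkhoff splitting along $\Delta$. Since $\tilde \Phi = I \in \mathcal{B}_L$ on $\Delta$ and $\mathcal{B}_L$ is open (Theorem \ref{birkhoff}), $\tilde\Phi$ has a smooth left-normalized factorization $\tilde \Phi = \hat G_- \hat G_+$ on a neighbourhood $U$ of $\Delta$, with $\hat G_\pm = I$ on $\Delta$. Differentiating $\tilde\Phi = \tilde X^{-1}(x)\hat Y(y)$ yields $\partial_x \tilde \Phi = -A(x)\tilde \Phi$ and $\partial_y \tilde \Phi = \tilde \Phi A(y)$, so in the coordinates $u = \tfrac12(x-y)$, $v = \tfrac12(x+y)$ one gets $\partial_u \tilde \Phi|_\Delta = -2A(v)$. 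Differentiating the factorization and using $\hat G_\pm|_\Delta = I$ splits $-2A(v)$ into its strictly negative part $\partial_u \hat G_-|_\Delta$ and its non-negative part $\partial_u \hat G_+|_\Delta$; reading off the $(2,1)$-entry shows that the leading coefficient $c_{-1}$ of the $(2,1)$-entry of $\hat G_-$ satisfies $c_{-1}|_\Delta = 0$ and $\partial_u c_{-1}|_\Delta = 2\beta_1$. Non-degeneracy ($\beta_1 \neq 0$) then forces $c_{-1}$ to vanish to exactly first order transverse to $\Delta$. By Lemma \ref{switchlemma}, $\hat \Phi = \omega_1 \hat G_- \hat G_+$ lies in $\mathcal{B}_L$ precisely when $c_{-1}\neq 0$ and in $\PP_L^1$ when $c_{-1} = 0$; hence $\Sigma^\circ$ contains all nearby off-diagonal points (so is non-empty), $\{c_{-1} = 0\}$ coincides locally with $\Delta$, and $\hat \Phi(U) \subseteq \mathcal{B}_L \cup \PP_L^1$ with $\hat \Phi^{-1}(\mathcal{B}_L)$ open and dense in $U$.

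These facts assemble into item (\ref{singdpwitem2}). First, $U = (\Sigma^\circ \cap U) \cup \Delta$ by the dichotomy above, so $U \subseteq \Sigma_s$ and $\Sigma_s$ is open. On the overlap $\Sigma^\circ \cap U$ the two prescriptions for $f^\lambda$ agree: this is exactly the invariance computation (\ref{2sym}) in Theorem \ref{firstsmallcellthm}, which identifies $\sym_\lambda(\hat Y \hat H_+^{-1})$ with $\sym_\lambda(\hat Y \hat G_+^{-1})$. Since $\hat Y = \tilde X$ and $\hat G_+$ are smooth on $U$, the extension $f^\lambda = \sym_\lambda(\hat Y \hat G_+^{-1})$ is smooth across $\Delta$; combined with the Fourier-support and regularity data of $(\psi^X,\psi^Y)$, this verifies every clause of the definition of a generalized timelike CMC surface. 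Item (\ref{singdpwitem3}) then follows from the globally valid expressions for $f^\lambda_x$ and $f^\lambda_y$ derived in the proof of Theorem \ref{firstsmallcellthm} (culminating in (\ref{limitingderivatives})): substituting $[\psi^X_1]_{21} = \gamma_1$ and $[\psi^Y_{-1}]_{12} = -\gamma_{-3}$ and using $\hat G_+ = I$, $c_{-1} = 0$ on $\Delta$, so that $\Ad_{Y^\lambda(G^\lambda_+)^{-1}} = \Ad_{Y^\lambda}$, gives the stated formulas. Their common proportionality to $\Ad_{Y^\lambda}(e_0-e_1)$ re-proves that $\Delta$ lies in the singular set, and that it equals the singular set in the regular case follows from the remark after the definition of generalized surfaces (immersed precisely where $\hat\Phi$ maps into $\mathcal{B}_L$).

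The hard part, and the only step requiring genuine computation rather than bookkeeping or citation, is the transverse linearization of the second paragraph: establishing $\partial_u c_{-1}|_\Delta = 2\beta_1$ is what converts the algebraic non-degeneracy condition $\beta_1 \neq 0$ into the three geometric assertions (non-emptiness of $\Sigma^\circ$, openness of $\Sigma_s$, and identification of the singular locus with $\Delta$) in a single stroke. I would be careful there that the splitting of $\partial_u\tilde\Phi|_\Delta$ into $\text{Lie}(\G^-_*)$ and $\text{Lie}(\G^+)$ parts is exactly the derivative of the Birkhoff diffeomorphism, which is legitimate since $\Delta$ lies in the open locus where that diffeomorphism is defined.
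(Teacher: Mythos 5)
Your proposal is correct and follows essentially the same route as the paper: the conjugated pair $(\Ad_{\omega_1}\tilde\psi,\,\tilde\psi)$ with $\gamma=\gamma_1$, $\delta=-\gamma_{-3}$ feeding into Theorems \ref{dpwthm} and \ref{firstsmallcellthm}, and the same key linearization of the Birkhoff factorization along $\Delta$ (the paper's equation (\ref{gplusminusderivatives})), giving $\dd c_{-1}=2\beta_1\,\dd u$ so that non-degeneracy yields non-emptiness of $\Sigma^\circ$ and the identification of the singular locus. Your treatment is if anything slightly more explicit than the paper's (which disposes of items (\ref{singdpwitem2}) and (\ref{singdpwitem3}) by citation to Theorem \ref{firstsmallcellthm} and (\ref{limitingderivatives})), but the substance is identical.
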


\begin{proof}
\emph{Item (\ref{singdpwitem1}):}
We need to show that $\hat \Phi ^{-1}(\mathcal{B}_L)$ is non-empty. The rest
of the statement then follows from Theorem \ref{firstsmallcellthm}.
Factorizing $\tilde \Phi = \tilde X^{-1} \hat Y = \hat G_- \hat G_+$ as in item (\ref{singdpwitem2}) around $\Delta$,  and writing
$$
\hat G_-= O_\infty(\lambda^{-2}) + \begin{pmatrix} 1 & b_{-1} \lambda^{-1} \\ c_{-1} \lambda^{-1}& 1 \end{pmatrix}, \quad \quad
\hat G_+ = \bbar A_0 & B_1 \lambda \\ C_1 \lambda & A_0^{-1} \ebar + O(\lambda^2),
$$
we recall from the proof of Theorem \ref{firstsmallcellthm} that 
$\hat \Phi := \omega_1 \hat G_- \hat G_+$ is in the big cell if and only if
$c_{-1} \neq 0$. Thus we need to show that $c_{-1}$ is non-zero 
away from $\Delta$,
for which it is enough to show that the derivative of $c_{-1}$ is non-zero along $\Delta$. Differentiating
 $\tilde X^{-1} \hat Y = \hat G_- \hat G_+$ and evaluating
 along $\Delta$, along which $\hat G_- = \hat G_+ = I$, 
we have
$$
- \tilde X^{-1} \dd \tilde X +  
\hat Y^{-1}\dd \hat Y
=   \dd \hat G_- +
 \dd \hat G_+.
$$
Using that  $\tilde X$ is a function of $x$ only and $\hat Y$ is a function of $y$, 
and that they take the same value along $x=y =v$, this becomes
\beqas
\dd \hat G_- + \dd \hat G_+ & =&  - A \dd x + A \dd y \\
&=&  - 2 A \dd u.
\eeqas
Comparing the coefficients of $\lambda^{-1}$, $\lambda^0$ and $\lambda$, we conclude
that, for $x=y=v$, 
\beq \label{gplusminusderivatives}
\dd \hat G_- = \bbar 0 & 2\gamma_{-3} \\ 2 \beta_1 & 0 \ebar \lambda^{-1} \dd u,
\quad \quad
\dd \hat G_+ = 2 \bbar \alpha_0 & \gamma_{-1} \lambda + \gamma_1 \lambda^3 \\
           0 & -\alpha_0 \ebar \dd u.
\eeq
 Thus, $\dd c_{-1} = 2 \beta_1 \dd u$, and  the condition that $\beta_1$ does not vanish guarantees that $\partial_u c_{-1}$
 is non-vanishing on $\Delta$.\\
 
\noindent \emph{Item (\ref{singdpwitem2}):} 
Follows from Theorem \ref{firstsmallcellthm}.\\

\noindent \emph{Item (\ref{singdpwitem3}):}
This is (\ref{limitingderivatives}) of Theorem \ref{firstsmallcellthm},
 substituting $\gamma = \gamma_1$, $\delta = -\gamma_3$,
since the potentials $\psi^X$ and $\psi^Y$ referred to there are here represented by 
$\psi^X = \hat X^{-1} \dd \hat X = \omega_1  \tilde \psi \omega_1^{-1}$ and $\psi^Y = \tilde \psi$.\\
\end{proof}

\subsection{Extending the Euclidean normal to the singular set} 
Let $f$ be a generalized timelike CMC $H$ surface.
We earlier defined the Euclidean unit normal   
$\enormal := \Ad_{e_0} \Ad_F (e_2)/ ||\Ad_F (e_2)||$, 
which is well defined on $\Sigma^\circ = \hat \Phi^{-1}(\mathcal{B}_L)$.
For a point $z_0 \in \Sigma \setminus \Sigma^\circ = \hat \Phi^{-1}(\mathcal{P}_L^1)$ 
one has,
 on some neighbourhood $U$ of $z_0$, that
 the singular set is locally given as the 
set $c_{-1} = 0$, where $c$ is the $(2,1)$-component of $\hat G_-$ in the proof of
Theorem \ref{firstsmallcellthm}. 
To extend  $\enormal$  continuously over $\hat \Phi^{-1}(\PP_L^1)$, we need to multiply it
by the sign of $c_{-1}$, and so we redefine it:
\beqa  \label{newnormal}
\enormal \,\, := \,\, \sign(c_{-1}) \frac{\Ad_{e_0} \Ad_F (e_2)}{||\Ad_F (e_2)||} 
\,\,  = \,\,  -\varepsilon  \, \sign(c_{-1}) \frac{f_x \times f_y}{\|f_x \times f_y\|},
\eeqa
where $\varepsilon = \varepsilon_1 \varepsilon_2$ as  before.
\begin{lemma}  \label{enormallemma}
Let $f$ be a generalized timelike CMC surface, locally represented by $\hat \Phi = \hat X^{-1} \hat Y$,
and let $z_0$ be a point such that $\hat \Phi(z_0) = \omega_1$. Then $\enormal$ is well defined
and smooth on a neighbourhood $U$ of $z_0$, and we have:
$$
\enormal = \frac{\Ad_{e_0} \Ad_{Y G_+^{-1}}(c_{-1} e_2 + e_0 - e_1)}{||\Ad_{Y G_+^{-1}}(c_{-1} e_2 + e_0 - e_1)||}, 
$$
where $c_{-1}:U \to \real$ and $G_+: U \to \SLC$ are smooth, $c_{-1}(z_0) = 0$ and $G_+(z_0) = I$.\\
 \end{lemma}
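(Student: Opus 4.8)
The plan is to reduce everything to the explicit Birkhoff data already produced in the proof of Theorem \ref{firstsmallcellthm}, and then to observe that the apparent $1/c_{-1}$ singularity in the definition (\ref{newnormal}) of $\enormal$ cancels exactly against the factor $\sign(c_{-1})$ once we normalize. First I would fix a neighbourhood $U$ of $z_0$ on which $\omega_1^{-1}\hat\Phi$ takes values in $\B_L$; this is possible since $\omega_1^{-1}\hat\Phi(z_0)=I\in\B_L$. Performing the left normalized Birkhoff factorization $\omega_1^{-1}\hat\Phi=\hat G_-\hat G_+$ on $U$ yields smooth maps $\hat G_\pm$, and by uniqueness of the normalized factorization together with $\omega_1^{-1}\hat\Phi(z_0)=I$ we get $\hat G_-(z_0)=\hat G_+(z_0)=I$. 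Writing $c_{-1}$ for the coefficient of $\lambda^{-1}$ in the $(2,1)$-entry of $\hat G_-$ and $G_+=\hat G_+\big|_{\lambda=1}$, these are exactly the smooth objects named in the statement, with $c_{-1}(z_0)=0$ and $G_+(z_0)=I$.

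Next I would recall from the proof of Theorem \ref{firstsmallcellthm} that $\hat F=\hat Y\hat H_+^{-1}=\hat Y\hat G_+^{-1}\hat U_+^{-1}D^{-1}$, with $\hat U_+$ and $D$ the explicit matrices supplied by Lemma \ref{switchlemma}. Evaluating at $\lambda=1$ and using $\Ad_{AB}=\Ad_A\Ad_B$, this gives $\Ad_F(e_2)=\Ad_{YG_+^{-1}}\bigl(\Ad_{U_+^{-1}D^{-1}}(e_2)\bigr)$. The one genuine computation is the inner adjoint: since $D$ is diagonal it fixes $e_2$, and a direct multiplication using $\hat U_+^{-1}\big|_{\lambda=1}$, whose $(1,2)$-entry is $-1/c_{-1}$, yields
\[
\Ad_{U_+^{-1}D^{-1}}(e_2)=\frac{1}{c_{-1}}\bigl(c_{-1}e_2+e_0-e_1\bigr).
\]
Hence $\Ad_F(e_2)=c_{-1}^{-1}\,\Ad_{YG_+^{-1}}(c_{-1}e_2+e_0-e_1)$ on $\Sigma^\circ\cap U$, where $c_{-1}\neq 0$.

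Finally I would substitute this into (\ref{newnormal}). The scalar $c_{-1}^{-1}$ factors out of both $\Ad_{e_0}\Ad_F(e_2)$ and $\|\Ad_F(e_2)\|$, producing an overall factor $\sign(c_{-1})\cdot c_{-1}^{-1}/|c_{-1}^{-1}|=\sign(c_{-1})^2=1$ after normalization, so that on $\Sigma^\circ\cap U$ the normal $\enormal$ equals the claimed expression. It then remains to note that the right-hand side is smooth and well defined on all of $U$, including the singular set $\{c_{-1}=0\}$: the vector $c_{-1}e_2+e_0-e_1$ never vanishes near $z_0$ because its $e_0$-coefficient equals $1$, and $\Ad_{YG_+^{-1}}$ is an invertible linear map of $\slR$ (as $YG_+^{-1}\in\SLR$ at $\lambda=1$), so the denominator stays nonzero. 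Since the formula agrees with the definition of $\enormal$ on the open dense set $\Sigma^\circ\cap U$ and is itself smooth on $U$, it furnishes the asserted smooth extension across the singularity.

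I expect the main obstacle to be conceptual rather than computational: one must recognize that (\ref{newnormal}), which naively blows up as $c_{-1}\to 0$ (because $\Ad_F(e_2)$ itself does, through the $c_{-1}^{-1}$ factor), in fact extends smoothly, and that the precise cancellation is engineered by the $\sign(c_{-1})$ inserted in the redefinition of $\enormal$. Once this cancellation is seen, the matrix algebra behind Lemma \ref{switchlemma} and the inner adjoint computation is entirely routine.
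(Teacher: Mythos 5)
Your proof is correct and follows essentially the same route as the paper: both use the factorization $\hat F = \hat Y \hat G_+^{-1}\hat U_+^{-1}D^{-1}$ from the proof of Theorem \ref{firstsmallcellthm}, compute $\Ad_{U_+^{-1}D^{-1}}(e_2) = c_{-1}^{-1}(c_{-1}e_2 + e_0 - e_1)$, and substitute into the redefined normal \eqref{newnormal}. The only difference is that you make explicit the details the paper compresses into ``substituting proves the lemma'' --- the cancellation $\sign(c_{-1})\cdot\sign(c_{-1})=1$, the nonvanishing of the denominator near $z_0$, and the extension argument via density of $\Sigma^\circ\cap U$ --- all of which you handle correctly.
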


\begin{proof}
With notation as in the proof Theorem \ref{firstsmallcellthm}, we have
\bdm
\hat F =  \hat Y \hat H_+^{-1} = \hat Y \hat G_+^{-1} \hat U_+^{-1} D^{-1},
\edm
and
\bdm
U_+^{-1} D^{-1} = \bbar  c_{-1}^{-1} & -1 \\ 0 & c_{-1} \ebar, \quad
\Ad_{U_+^{-1} D^{-1}} (e_2) = c_{-1}^{-1}(c_{-1}e_2 + e_0 - e_1).
\edm
Substituting into the definition for $\enormal$ proves the lemma.
\end{proof}

\noindent
Note that if $Y(z_0) = I$ then this simplifies to
$\enormal(z_0) = (e_0 + e_1)/{\sqrt{2}}$.\\

\begin{lemma}  \label{enormalderlemma}
Let $f$ be a generalized timelike CMC surface, locally represented by $\hat \Phi$,
and let $z_0$ be a point such that $\hat \Phi(z_0) = \omega_1$ and $\hat Y(z_0)=I$.  Then
$$
 \lim_{z\to z_o}\dd\enormal(z)=-\frac{1}{\sqrt{2}}((\sigma+\beta)\dd u+(\sigma-\beta)\dd v)e_2.
$$ 
where,
\bdm
\psi^X= \left(O_\infty(1) + \begin{pmatrix} 0 & \beta \\ \gamma & 0 \end{pmatrix} \lambda \right) \dd x,\quad \psi^Y=\left( \begin{pmatrix} 0 & \delta \\ \sigma & 0 \end{pmatrix}  \lambda^{-1} + O(1) \right) \dd y,
\edm
are a regular potential pair corresponding to the surface.\\
\end{lemma}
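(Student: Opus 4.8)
The plan is to compute the limit of $\dd \enormal$ at $z_0$ by differentiating the explicit formula for $\enormal$ provided by Lemma \ref{enormallemma}. Since $\hat Y(z_0) = I$, the remark after Lemma \ref{enormallemma} gives $\enormal(z_0) = (e_0 + e_1)/\sqrt{2}$, so I already know the base value; the task is purely to differentiate. Writing $\enormal = \mu \, \Ad_{e_0}\Ad_{YG_+^{-1}}(w)$ where $w = c_{-1}e_2 + e_0 - e_1$ and $\mu$ is the reciprocal Euclidean norm of $\Ad_{YG_+^{-1}}(w)$, I would apply the product/chain rule. The key algebraic identity is the standard one for differentiating an adjoint action: $\dd(\Ad_{G}(w)) = \Ad_G([\,G^{-1}\dd G, w\,] + \dd w)$. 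I would apply this with $G = YG_+^{-1}$.

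First I would evaluate everything along $\Delta$ (equivalently in the limit $z \to z_0$), using the fact that at $z_0$ we have $G_+(z_0) = I$, $c_{-1}(z_0) = 0$, and $Y(z_0) = I$, so that $\Ad_{YG_+^{-1}}$ reduces to the identity in the limit. The differentiated expression then splits into three contributions: the term from $\dd c_{-1}$ acting on $w$ (which contributes $\dd c_{-1}\, e_2$, since $\partial w/\partial c_{-1} = e_2$); the term from the Maurer-Cartan form $(YG_+^{-1})^{-1}\dd(YG_+^{-1})$ bracketed against $w(z_0) = e_0 - e_1$; and the term from $\dd \mu$, which is proportional to $\enormal(z_0)$ itself. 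The crucial simplification is that $w(z_0) = e_0 - e_1$ is a lightlike (null) vector, which is why the normal becomes lightlike here, and the bracket terms must be handled carefully using the commutation relations $[e_0,e_1]=2e_2$, $[e_1,e_2]=-2e_0$, $[e_2,e_0]=2e_1$ recorded in Section \ref{enormalsect}.

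To make the bracket term explicit I would need the Maurer-Cartan form of $YG_+^{-1}$ along $\Delta$. Here I would invoke the computations already done in Proposition \ref{singdpwprop}: equation (\ref{gplusminusderivatives}) gives $\dd \hat G_+$ along $\Delta$ in terms of $\alpha_0$, $\gamma_{\pm1}$, $\gamma_1$, while $\hat Y^{-1}\dd \hat Y = \psi^Y$ supplies $\dd Y$, with the $\lambda^{-1}$ coefficient being the off-diagonal matrix built from $\delta$ and $\sigma$. Combining these gives the Maurer-Cartan form of $YG_+^{-1}$ evaluated at $\lambda = 1$ along $\Delta$; bracketing the resulting algebra element against $e_0 - e_1$ and projecting (after the $\Ad_{e_0}$) should produce only an $e_2$-component, with coefficient a linear combination of $\beta$ and $\sigma$ in $\dd u$ and $\dd v$. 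I expect the $\dd\mu$ term to contribute only along the $\enormal(z_0)$ direction and hence to drop out of (or be absorbed into) the $e_2$-coefficient after one checks that the tangential pieces cancel.

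The main obstacle will be the bookkeeping in the bracket term: correctly assembling the $\lambda = 1$ evaluation of the Maurer-Cartan form of $YG_+^{-1}$ from the separate pieces $\dd Y$ and $\dd G_+^{-1}$, and then verifying that after applying $\Ad_{e_0}$ and taking the Euclidean-normal normalization the only surviving component is along $e_2$ with precisely the coefficient $-\frac{1}{\sqrt2}((\sigma+\beta)\dd u + (\sigma - \beta)\dd v)$. The split into $\dd u$ and $\dd v$ coefficients reflects that $\dd x = \dd u + \dd v$ and $\dd y = \dd v - \dd u$, so I would rewrite the $x$- and $y$-derivatives in these coordinates at the end. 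The identification $\gamma = \gamma_1$, $\delta = -\gamma_{-3}$ from Proposition \ref{singdpwprop}(\ref{singdpwitem3}) and the relation $\dd c_{-1} = 2\beta_1\,\dd u$ from (\ref{gplusminusderivatives}) are the bridges that let me express the answer in terms of the potential-pair data $\beta$, $\gamma$, $\delta$, $\sigma$ as stated.
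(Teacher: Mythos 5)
Your overall strategy---differentiating the manifestly smooth formula of Lemma \ref{enormallemma} rather than a frame-based expression---is viable, and it is genuinely different from the paper's proof: the paper differentiates $\enormal$ through the frame $\hat F = \hat Y\hat G_+^{-1}\hat U_+^{-1}D^{-1}$, using $F^{-1}\dd F$ from Theorem \ref{dpwthm}\,(3), and must track terms of order $c_{-1}^{-1}$, $c_{-1}^{-2}$, $g^{-2}$ that blow up individually and cancel only in the limit $c_{-1}\to 0$, $g/c_{-1}\to 1$; your route keeps every factor smooth at $z_0$, so no divergent bookkeeping arises. However, there is a genuine gap in how you propose to supply the Maurer--Cartan input: equation \eqref{gplusminusderivatives} is proved only under the normalizations of Proposition \ref{singdpwprop}, where $\hat\Phi\equiv\omega_1$ along the \emph{whole} diagonal and $\hat G_\pm\equiv I$ on $\Delta$, so that $\dd\hat G_\pm$ are pure $\dd u$ forms and the potential data are constrained by $\beta=\beta_1$, $\sigma=-\beta_1$. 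Lemma \ref{enormalderlemma} assumes only $\hat\Phi(z_0)=\omega_1$ at the single point $z_0$, for an \emph{arbitrary} regular potential pair: $\beta$ and $\sigma$ are independent, the singular set near $z_0$ need not be the diagonal (it can be any curve through $z_0$, even a characteristic one), and $\dd\hat G_\pm(z_0)$ have both $\dd x$- and $\dd y$-components. Importing $\dd c_{-1}=2\beta_1\,\dd u$ therefore forces $\sigma=-\beta$ and collapses the conclusion to a one-parameter special case proportional to $\dd v\,e_2$; the two independent coefficients $\sigma\pm\beta$ of the statement cannot be recovered this way. (The dictionary $\gamma=\gamma_1$, $\delta=-\gamma_{-3}$ you invoke is a symptom of the same over-specialization: $\gamma$ and $\delta$ do not enter the conclusion at all.)

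The repair stays entirely within your scheme: differentiate the Birkhoff splitting $\omega_1^{-1}\hat\Phi=\hat G_-\hat G_+$ at $z_0$ itself, where $\hat G_\pm(z_0)=I$, to get $\dd\hat G_-(z_0)+\dd\hat G_+(z_0)=-\Ad_{\omega_1^{-1}}\psi^X+\psi^Y$, and separate negative from non-negative $\lambda$-powers. This yields $\dd c_{-1}(z_0)=\beta\,\dd x+\sigma\,\dd y$ (which reduces to $2\beta_1\,\dd u$ in the normalized setting, a useful check) together with the needed part of $\dd\hat G_+(z_0)$. A structural point that rescues your ``bookkeeping'' worry: writing $\theta_0$ for the Maurer--Cartan form of $YG_+^{-1}$ at $z_0$, $\lambda=1$, one has $[\theta_0,e_0-e_1]=2p(e_0-e_1)-2r\,e_2$ for $\theta_0=\left(\begin{smallmatrix}p & q\\ r & -p\end{smallmatrix}\right)$, so only the diagonal and lower-left entries survive; this is why the unspecified $O_\infty(1)$ and $O(1)$ tails of $\psi^X$ and $\psi^Y$ drop out, and one finds $r=\sigma\,\dd y$. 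One small correction to your outline: the $\dd\mu$ term is not absorbed into the $e_2$-coefficient; it lies along $\enormal(z_0)\parallel e_0+e_1$ and exactly cancels the $2p(e_0-e_1)$ bracket contribution after applying $\Ad_{e_0}$ (as it must, since $\enormal$ is unit, so $\dd\enormal(z_0)\perp\enormal(z_0)$), leaving only $-\tfrac{1}{\sqrt2}(\beta\,\dd x-\sigma\,\dd y)e_2$. Carried out this way your computation reproduces the limit displayed at the end of the paper's own proof, $-\tfrac{1}{\sqrt2}\left((\beta+\sigma)\dd u+(\beta-\sigma)\dd v\right)e_2$; note that the lemma's statement carries the opposite sign on the $\dd v$-coefficient, an internal discrepancy in the paper that your final sign bookkeeping will run into.
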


\begin{proof}
As in Lemma \ref{enormallemma}, we have $\hat F =  \hat Y \hat H_+^{-1} = \hat Y \hat G_+^{-1} \hat U_+^{-1} D^{-1}$. Differentiating $\enormal$ gives:
\beqas
\dd \enormal  =&
\dfrac{\sign(c_{-1})}{\|\Ad_F(e_2)\|} \Ad_{e_0}\biggl(\Ad_F[F^{-1}\dd F,e_2] \\
    & -\dfrac{1}{\|\Ad_F(e_2)\|^2}\ip{\Ad_F[F^{-1}\dd F,e_2]}{\Ad_F(e_2)}_E \Ad_F(e_2)\biggr).
\eeqas
According to Theorem \ref{dpwthm} (3), we have 
$$
F^{-1}\dd F=\psi^X_1\dd x+\alpha_0+\hat H_+ \big|_{\lambda=0} \psi^Y_{-1}\hat H_+^{-1}\big|_{\lambda=0}, 
$$
where $\alpha_0$ is a diagonal matrix of $1$-forms. As in the proof of
Theorem \ref{firstsmallcellthm}, we write 
$$
\psi^X_1=\bbar 0 & \beta \\ \gamma & 0 \ebar,\qquad \hat H_+ \big|_{\lambda=0} \psi^Y_{-1}\hat H_+^{-1}\big|_{\lambda=0} =\bbar 0 & g^2\delta \\ g^{-2}\sigma & 0 \ebar,
$$
where $g = c_{-1} A_0 \to 0$ as $z_0$.
Writing $c=c_{-1}$ to simplify notation, one
obtains from  $\Ad_{YG_+^{-1}}=I+o(1)$, the following formula:  
\beqas
\Ad_F[F^{-1}\dd F,e_2]=& 2\bbar c\gamma & \gamma+c^{-2}\beta \\ -c^2\gamma & -c\gamma \ebar \dd x \\
& +  2\bbar cg^{-2}\sigma  g^{-2}\sigma+c^{-2} g^2\delta \\ -s_{-1}^2 g^{-2}\sigma & -cg^{-2}\sigma \ebar\dd y
 +o(1).
\eeqas
Using that $\Ad_F(e_2)=c^{-1}(e_0-e_1)+e_2+o(1)$, we obtain
\begin{equation*}
\begin{split}
\ip{\Ad_F[F^{-1}\dd F,e_2]}{\Ad_F(e_2)}_E =      &-2(c\gamma+c^{-1}(c^{-2}\beta+\gamma))\dd x\\
&-2(c g^{-2}\sigma+c^{-1}(c^{-2} g^2\delta+g^{-2}\sigma))\dd y+o(1),
\end{split}
\end{equation*}
and so we have 
\begin{equation*}
\begin{split}
\Ad_{e_0}(\dd\enormal)=
&o(1) + \frac{1}{\sqrt{2}}\bbar c^2\gamma-c^4\gamma-\beta & 2c^2\gamma-2c^3\gamma-2c\gamma \\ -2c^2\gamma & -c^2\gamma+c^4\gamma+\beta\ebar\dd x\\
&+\dfrac{c^2}{g^2\sqrt{2}}\bbar \dfrac{c^2}{g^2} \sigma-\dfrac{c^4}{g^2}\sigma+g^2 \delta & -2g^{-2}\sigma \\ -2 g^{-2}\sigma & -\dfrac{c^2}{g^2} \sigma+\dfrac{c^4}{g^2}\sigma+g^2\delta \ebar \dd y.
\end{split}
\end{equation*}
As $z\to z_0$ we have $c\to 0$ and $g/c \to 1$; hence 
$$
\Ad_{e_0}(\dd\enormal)\to\frac{1}{\sqrt{2}}(\beta e_2\dd x -\sigma e_2\dd y)=\frac{1}{\sqrt{2}}((\beta+\sigma)\dd u+(\beta-\sigma)\dd v)e_2.
$$
Since $\Ad_{e_0}(e_2)=-e_2$, the result follows. \\
\end{proof}

\begin{proposition}  \label{nondegsingprop}
Let $f$ be the surface constructed from a singular potential $\tilde \psi$ in 
accordance with Proposition \ref{singdpwprop}.  
The map $f: \Sigma_s \to \LL^3$ is a frontal.
A singular point on $\hat \Phi^{-1}(\mathcal{P}_L^1)$ is non-degenerate if and only if
$\tilde \psi$ is non-degenerate and regular at the point.\\
\end{proposition}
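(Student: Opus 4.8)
The plan is to establish the two assertions separately. For the frontal property, I would take as the normal field the signed Euclidean normal $\enormal$ of $(\ref{newnormal})$. Every point of the singular set $\Delta=\hat\Phi^{-1}(\PP_L^1)$ satisfies $\hat\Phi=\omega_1$ by the normalizations of Proposition \ref{singdpwprop} (along $\Delta$ one has $\tilde\Phi=\tilde X^{-1}\hat Y=I$, so $\hat\Phi=\omega_1\tilde\Phi=\omega_1$), hence Lemma \ref{enormallemma} applies at each such point and exhibits $\enormal$ as a \emph{smooth} unit vector field on a full neighbourhood. On the dense open set $\Sigma^\circ=\hat\Phi^{-1}(\B_L)$ the surface is immersed and $\enormal$ equals, up to sign, $f_x\times f_y/\|f_x\times f_y\|$, so $\ip{\enormal}{f_x}_E=\ip{\enormal}{f_y}_E=0$ there. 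Since $\enormal$, $f_x$ and $f_y$ are all smooth on $\Sigma_s$ and these two identities hold on the dense subset $\Sigma^\circ$, they persist on all of $\Sigma_s$ by continuity; thus $\enormal\perp\dd f(TM)$ everywhere and $(f,[\enormal])$ is a frontal.

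For the non-degeneracy criterion I would compute the function $\chi$, defined by $f_x\times f_y=\chi\enormal$, explicitly near a point of $\Delta$. Since the cross-product formula $(\ref{fxcrossfy})$ and the signed normal $(\ref{newnormal})$ are both proportional to $\Ad_{e_0}\Ad_F(e_2)$, reading off the coefficient and using $\varepsilon e^\omega=-4c_1b_2/H^2$ from Proposition \ref{characterizationprop} gives
\bdm
\chi=\sign(c_{-1})\,\frac{2c_1b_2}{H^2}\,\|\Ad_F(e_2)\|.
\edm
Next I would substitute the two competing singular factors: from the proof of Lemma \ref{enormallemma}, $\Ad_F(e_2)=c_{-1}^{-1}w$ with $w:=\Ad_{YG_+^{-1}}(c_{-1}e_2+e_0-e_1)$ smooth and nonvanishing, so $\|\Ad_F(e_2)\|=|c_{-1}|^{-1}\|w\|$; and matching the Maurer-Cartan form $(\ref{mcpotexpression})$ as in the proof of Theorem \ref{firstsmallcellthm}, with the identifications $\gamma=\gamma_1$, $\delta=-\gamma_{-3}$ of Proposition \ref{singdpwprop}, gives $c_1b_2=-\gamma_1\gamma_{-3}c_{-1}^2A_0^2$. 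The combination $\sign(c_{-1})\,|c_{-1}|^{-1}\,c_{-1}^2=c_{-1}$ then collapses, producing the \emph{smooth} expression $\chi=-2\gamma_1\gamma_{-3}A_0^2\|w\|\,c_{-1}/H^2$, which vanishes identically on $\Delta$.

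Finally I would differentiate. Because $\chi$ is a smooth function times $c_{-1}$, and $c_{-1}\equiv0$ on $\Delta$, only the derivative of $c_{-1}$ survives along $\Delta$; using $A_0=1$ there and $\dd c_{-1}|_\Delta=2\beta_1\,\dd u$ from $(\ref{gplusminusderivatives})$ yields $\dd\chi|_\Delta=-4\gamma_1\gamma_{-3}\beta_1\|w\|\,\dd u/H^2$. As $\|w\|\neq0$ and $\dd u$ is transverse to $\Delta=\{u=0\}$, this is nonzero precisely when $\gamma_1\gamma_{-3}\beta_1\neq0$, which by Definition \ref{singpotdef} is exactly the condition that $\tilde\psi$ be both regular and non-degenerate at the point, giving the stated equivalence.

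The step I expect to be the main obstacle is the bookkeeping of the cancellations in $\chi$: the norm $\|\Ad_F(e_2)\|$ blows up like $|c_{-1}|^{-1}$ while $c_1b_2$ vanishes to second order in $c_{-1}$, so $\chi$ is a ratio of a vanishing and a blowing-up quantity. It is exactly the factor $\sign(c_{-1})$ built into the definition $(\ref{newnormal})$ that converts the residual $|c_{-1}|$ into the smooth factor $c_{-1}$; without it $\chi$ would behave like $|c_{-1}|$ and fail to be differentiable along $\Delta$. Verifying that these three ingredients combine into a genuinely smooth $\chi$ that vanishes to exactly first order, rather than a $|c_{-1}|$-type non-smoothness, is the delicate point, with the derivative formula $(\ref{gplusminusderivatives})$ then translating first-order vanishing of $\chi$ into the explicit condition $\gamma_1\gamma_{-3}\beta_1\neq0$.
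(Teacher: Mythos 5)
Your proposal is correct and follows essentially the same route as the paper's proof: frontal via Lemma \ref{enormallemma}, then $\chi$ computed from \eqref{fxcrossfy} and \eqref{newnormal} with the identifications $c_1=\gamma_1$, $b_2=(c_{-1}A_0)^2\delta=-(c_{-1}A_0)^2\gamma_{-3}$, and $\dd c_{-1}=2\beta_1\,\dd u$ from \eqref{gplusminusderivatives}, arriving at the same conclusion $\dd\chi\big|_{\Delta}=-4\sqrt{2}\,\beta_1\gamma_1\gamma_{-3}\,\dd u/H^2$ (your factor $\|w\|$ equals $\sqrt{2}$ at the normalized point where $\hat Y(z_0)=I$). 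Your only departure is organizational but welcome: you factor $\chi$ exactly as $-2\gamma_1\gamma_{-3}A_0^2\|w\|\,c_{-1}/H^2$ via the cancellation $\sign(c_{-1})\,c_{-1}^2\,|c_{-1}|^{-1}=c_{-1}$, making the smoothness and first-order vanishing of $\chi$ manifest, whereas the paper differentiates the product directly using $o(1)$ asymptotics borrowed from the proof of Lemma \ref{enormalderlemma}.
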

\begin{proof}
That $f$ is a frontal follows from Lemma \ref{enormallemma}. To show that the frontal is non-degenerate, we must show  that $\dd \chi \neq 0$
 at a singular point $z_0$, where $f_x \times f_y = \chi \enormal$.
By the definition \eqref{newnormal}, we have 
\bdm
\chi = -\varepsilon \sign(c_{-1}) \|f_x \times f_y\|.
\edm
In the notation of Theorem \ref{firstsmallcellthm} we have 
$$
\varepsilon\frac{e^\omega}{2}=\ip{f_x}{f_y}_L=-2\frac{\gamma\delta A_0^2 c_{-1}^2}{H^2}.
$$

Substituting into the expression \eqref{fxcrossfy} we obtain
$$
\chi = -\varepsilon \sign(c_{-1}) \frac{e^\omega}{2}\|\Ad_F(e_2)\|=
    2  \sign(c_{-1}) \frac{\gamma\delta A_0^2c_{-1}^2}{H^2}\|\Ad_F(e_2)\|.
$$
The derivative is
\beqas
\dd \chi = &
 \sign(c_{-1}) \dfrac{\gamma\delta A_0^2}{H^2}
\biggl(
 2 c_{-1}^2\dfrac{\ip{\Ad_F[F^{-1}\dd F,e_2]}{\Ad_F(e_2)}_E}{\|\Ad_F(e_2)\|}  \\
&
+  4 c_{-1}\|\Ad_F(e_2)\|\dd c_{-1}
\biggr) +o(1).
\eeqas
From the proof of Lemma \ref{enormalderlemma} and the fact that $c_{-1}\|\Ad_F(e_2)\|=\sign(c_{-1})\sqrt{2}+o(1)$, an easy calculation gives
$$
c_{-1}^2\frac{\ip{\Ad_F[F^{-1}\dd F,e_2]}{\Ad_F(e_2)}_E}{\|\Ad_F(e_2)\|}=-\sqrt{2}\sign(c_{-1})(\beta\dd x+\sigma\dd y)+o(1).
$$
With our choice of potentials, we have 
\bdm
\beta=\beta_1, \quad \gamma= \gamma_1 \quad \sigma = -\beta_1,  \quad \delta = -\gamma_{-3}, 
\edm
so that ($\beta \dd x + \sigma \dd y) = 2\beta_1 \dd u$. From
\eqref{gplusminusderivatives} we have
  $\dd c_{-1}=2\beta_1\dd u$. Hence
$$
\dd \chi = -4\sqrt{2} \dfrac{\beta_1 \gamma_1 \gamma_{-3}}{H^2}\dd u.
$$
Hence  the singular point is non-degenerate if and only if $\gamma_1$, $\gamma_{-3}$ and
$\beta_1$ are non-zero, which is the condition that the potential is regular and non-degenerate.\\

\end{proof}

\subsection{The singular geometric Cauchy problem}   \label{singgcpsect}
The goal of this section is to construct  generalized timelike CMC surfaces with 
prescribed singular curves. As above, we assume the curve in the coordinate domain
is non-characteristic, that is, never parallel to a coordinate line. \\

\noindent
In order to obtain
a unique solution, we need to specify the derivatives of $f$ as well, as follows:\\
\begin{problem}  \label{ncsgcpprob}
The (non-characteristic) singular geometric Cauchy problem:
Let $J$ be a real interval with coordinate $v$. Given a smooth map $f_0: J \to \LL^3$, and a vector field
$V: J \to \LL^3$ such that $f_0^\prime(v)$ is lightlike,  $V$ is proportional
to $f_0^\prime(v)$ and the two vector fields do not vanish simultaneously.
Find a generalized timelike CMC surface $f: \Sigma \to \LL^3$, where $\Sigma$ is some open
subset of the $uv$-plane which contains the interval $J \subset \{u = 0 \}$, that, away
from $J$, is conformally immersed with lightlike coordinates $x=u+v$, $y=-u+v$, and
 such that along $J$
the following hold:
\bdm
\left. f \right |_J = f_0, \quad \left. f_u \right|_J = V.
\edm
\end{problem}

\noindent
After an isometry of the ambient space, we can assume that
$f_0^\prime(v) = s(v)(-e_0 + \cos \theta(v) e_1 + \sin \theta(v) e_2)$,
for some smooth functions $s$ and $\theta$, with $\theta(0)=0$, and so the
derivatives of a solution $f$ must satisfy:
\beqa
\begin{split}  \label{fufveqns}
f_v = s(-e_0 + \cos \theta e_1 + \sin \theta e_2),\\
f_u= t(-e_0 + \cos \theta e_1 + \sin \theta e_2),
\end{split}
\eeqa
where  $s$, $t$ are smooth and do not vanish simultaneously, and the function
$t$ is deduced from $V$.\\

\noindent
We want to construct  a singular potential 
\bdm
\tilde \psi =\bbar -\alpha_0  & -\gamma_1 \lambda^3 - \gamma_{-1} \lambda -\gamma_{-3} \lambda^{-1}  \\ -\beta_1 \lambda^{-1}  & \alpha_0 \ebar \dd v.
\edm
for the surface. Our task is to find $\gamma_1$, $\gamma_{\pm 1}$, $\gamma_{-3}$ and $\alpha_0$. 
We begin by looking for a 
"singular frame" $F_0 = \tilde X\big|_{\lambda =1} = \hat Y\big|_{\lambda =1}$ 
along $y=x$, such that $F_0^{-1} \dd F_0 = \tilde \psi \big|_{\lambda=1}$.
According  to Proposition \ref{singdpwprop}, 
using the formulae (\ref{fxfyformulae}) for $f_y$ and $f_x$ along $\Delta$, 
we must have
\beqas
f_v &=& \frac{1}{H}(-\gamma_1+\gamma_{-3}) \Ad_{F_0}(e_1-e_0),  \\
f_u &=&-\frac{1}{H}(\gamma_1+\gamma_{-3}) \Ad_{F_0}(e_1-e_0).
\eeqas
Comparing with  (\ref{fufveqns}) a
solution for $F_0$ is  given by:
\bdm
F_0 = \bbar  \cos( \theta/2) & -\sin(\theta/2) \\
  \sin(\theta/2) &  \cos (\theta/2) \ebar,
\edm
and with that choice of $F_0$, the functions $\gamma_1$ and $\gamma_{-3}$ are determined as:
\bdm
\gamma_1 = \frac{-H(s+t)}{2}, \quad
\gamma_{-3} = \frac{H(s-t)}{2}.\\
\edm

\noindent
Next we have the expression
\bdm
F_0^{-1} (F_0)_v = \frac{\theta^\prime}{2} \bbar 0 & -1 \\ 1 & 0 \ebar.
\edm
Comparing this with $\tilde \psi$, evaluated at $\lambda =1$,
we obtain:
$\alpha_0 =0$, 
$\theta^\prime/2 = -\beta_1$, and
$\theta^\prime/2 = \gamma_1+\gamma_{-1}+\gamma_{-3}$, and so:
\bdm
\alpha_0=0, \quad \beta_1 = -\frac{\theta^\prime}{2}, \quad
\gamma_{-1} = \frac{\theta^\prime}{2} + Ht.\\
\edm

\noindent
Hence, provided that $\hat \Phi^{-1} (\mathcal{B}_L)$ is not empty,
 a solution for the 
singular geometric Cauchy problem with data given by (\ref{fufveqns}) is obtained 
from the singular potential,
\bdm
\tilde \psi = \frac{1}{2}\bbar 0  & H(s+t) \lambda^3 - \left(\theta^\prime
 + 2Ht \right) \lambda + H(t-s) \lambda^{-1}  \\ \theta^\prime \lambda^{-1}  & 0 \ebar \dd v.
\edm
According to Proposition \ref{nondegsingprop}, the singular curve
 is non-degenerate if and only if the three functions $s+t$, $t-s$ and $\theta^\prime$
 do not vanish.  The non-degeneracy condition is thus:
\bdm
s \neq \pm t \quad \textup{and} \quad \theta^\prime \neq 0.\\
\edm

\begin{theorem} \label{singgcptheorem}
The surface $f: J \times J \to \LL^3$ obtained from the singular potential $\tilde \psi$
given above is the \emph{unique} solution for the non-characteristic
geometric Cauchy problem given by the equations (\ref{fufveqns}).
\end{theorem}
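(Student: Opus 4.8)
The plan is to split the statement into \emph{existence} (the surface $f$ built from $\tilde\psi$ solves Problem \ref{ncsgcpprob}) and \emph{uniqueness} (every solution equals $f$). For existence almost everything is already assembled: under the standing non-degeneracy hypothesis $\theta'\neq0$ one has $\beta_1=-\theta'/2\neq0$, so Proposition \ref{singdpwprop}(\ref{singdpwitem1}) gives $\hat\Phi^{-1}(\B_L)\neq\emptyset$ and produces a generalized timelike CMC surface $f$ with singular set $\Delta=J$. To check that $f$ realizes the data I would evaluate (\ref{fxfyformulae}) at $\lambda=1$, where $Y^1=F_0=\exp(\tfrac{\theta}{2}e_0)=\bbar \cos(\theta/2) & -\sin(\theta/2)\\ \sin(\theta/2) & \cos(\theta/2)\ebar$, and recombine via $\partial_u=\partial_x-\partial_y$, $\partial_v=\partial_x+\partial_y$. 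Since $\Ad_{F_0}(e_1-e_0)=-e_0+\cos\theta\,e_1+\sin\theta\,e_2=:w$ and $\gamma_1=-H(s+t)/2$, $\gamma_{-3}=H(s-t)/2$, this returns $f_v|_J=s\,w$ and $f_u|_J=t\,w$, i.e.\ the equations (\ref{fufveqns}); integrating $f_v|_J=f_0'$ and absorbing the constant into the ambient translation gives $f|_J=f_0$.

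For uniqueness, let $g$ be any solution. Being a generalized timelike CMC surface with a non-characteristic, non-degenerate singular curve, $g$ is governed by the analysis of Section \ref{singpotsection}: after the normalizations made there (coordinates with $J=\{u=0\}$ as in Section \ref{noncharsect}, $\hat\Phi|_\Delta=\omega_1$, and $\hat Y(z_0)=I$) it is represented by a singular potential $\tilde\psi_g$, and the \emph{entire} surface is recovered from this single one-variable potential through $\psi^Y(y)=\tilde\psi_g(y)$ and $\psi^X(x)=\omega_1\tilde\psi_g(x)\omega_1^{-1}$. Thus uniqueness reduces to matching $\tilde\psi_g$ with the constructed $\tilde\psi$ up to a surface-preserving gauge. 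Comparing directions and magnitudes of $f_u,f_v$ along $J$ forces the same $\gamma_1,\gamma_{-3}$ as above and $\Ad_{F_0^g}(e_1-e_0)=w$ for the singular frame $F_0^g=\hat Y_g|_{\lambda=1}$; differentiating this identity and using $[\tilde\psi_g|_{\lambda=1},e_1-e_0]=-2\alpha_0(e_1-e_0)-2\beta_1e_2$ then yields $\beta_1=-\theta'/2$ as well.

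Here I expect the main obstacle. At a class II singularity both $f_u$ and $f_v$ are parallel to the single null vector $w$, so the surface supplies only the one relation $\Ad_{F_0^g}(e_1-e_0)=w$; this pins $F_0^g$ down only up to the stabilizer of the null line $\real(e_1-e_0)$, namely $F_0^g=\exp(\tfrac{\theta}{2}e_0)\,\bbar 1 & b \\ 0 & 1 \ebar$ with $b(0)=0$, and a nonzero $b$ alters the remaining entries $\alpha_0$ and $\gamma_{-1}$ of the potential. The crux is to show this residual frame freedom does not change the surface, which I would do by exhibiting it as a genuine gauge. Put $E_{12}=\bbar 0 & 1 \\ 0 & 0 \ebar$ and $K=\exp(b(v)\lambda E_{12})=I+b\lambda E_{12}$. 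Then $K\in\G^+$ (it is fixed by $\sigma$ and $\rho$, using $\Ad_P E_{12}=-E_{12}$, and $K(0)=I$), while $\omega_1 K\omega_1^{-1}=I-b\lambda^{-1}E_{21}\in\G^-$. Replacing $\tilde X$ by $\tilde X K$ sends $\hat Y=\tilde X$ to $\hat Y K$ and $\hat X=\tilde X\omega_1^{-1}$ to $\hat X(\omega_1 K\omega_1^{-1})$; since right multiplication of $\hat Y$ by $\G^+$, or of $\hat X$ by $\G^-$, leaves the admissible frame $\hat F=\hat Y\hat H_+^{-1}$ (hence the surface) invariant, as in Theorem \ref{dpwthm}(\ref{dpwthmitem2}), the surface is unchanged.

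Finally I would close the loop with a one-line computation: gauging the constructed $\tilde\psi$ by $K$ gives again a singular potential, with $\gamma_1,\gamma_{-3},\beta_1$ unchanged but $\alpha_0\mapsto-b\beta_1$ and $\gamma_{-1}\mapsto\gamma_{-1}-b^2\beta_1-b'$, and its frame at $\lambda=1$ is exactly $\exp(\tfrac{\theta}{2}e_0)\bbar 1 & b \\ 0 & 1\ebar$. A singular potential is determined by its frame $F_0$ together with $\gamma_1,\gamma_{-3}$ (the frame fixes $\alpha_0$, $\beta_1$ and $\gamma_1+\gamma_{-1}+\gamma_{-3}$ through its $\lambda=1$ Maurer--Cartan form), so taking $b=b_g$ shows that $\tilde\psi_g$ is precisely this $K$-gauge of $\tilde\psi$. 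Hence $\tilde X_g=\tilde X\,K$ (same initial value $I$, same Maurer--Cartan form), and the two surfaces coincide: $g=f$. I anticipate the only delicate point is the verification that $K$ and $\omega_1 K\omega_1^{-1}$ genuinely lie in $\G^+$ and $\G^-$, which is exactly what makes the null-line ambiguity harmless.
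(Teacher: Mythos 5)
Your overall strategy coincides with the paper's: every solution is locally represented by a singular potential (Proposition \ref{singdpwprop}), the Cauchy data pins that potential down up to the stabilizer of the null direction $e_1-e_0$ along the curve, and the residual frame freedom is shown to be a surface-preserving gauge via uniqueness of the normalized Birkhoff factorization and the invariance properties of the Sym formula. Your unipotent gauge $K=I+b\lambda E_{12}$, with $\omega_1K\omega_1^{-1}=I-b\lambda^{-1}E_{21}\in\G^-_*$, is correct (the checks of the twisting conditions go through), and it is in fact slightly cleaner than the paper's mechanism, since with your $K$ the admissible frame $\hat Y K\,(\hat H_+K)^{-1}=\hat Y\hat H_+^{-1}$ is literally unchanged. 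Your auxiliary observation that a singular potential is determined by its $\lambda=1$ frame together with $\gamma_1,\gamma_{-3}$ is also sound, and it is exactly what is needed to promote the comparison of solutions from $\lambda=1$ (the only value constrained by the data) to the loop level --- a step the paper's proof passes over quickly when it asserts $\gamma_1\Ad_{Y^\lambda}(e_1-e_0)=\check\gamma_1\Ad_{\check Y^\lambda}(e_1-e_0)$ for all $\lambda$.

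There is, however, a genuine gap: you have dropped the diagonal part of the stabilizer. The data determines only the \emph{products} $\gamma_1\cdot\Ad_{F_0^g}(e_1-e_0)$ and $\gamma_{-3}\cdot\Ad_{F_0^g}(e_1-e_0)$, not the factors separately: since $\Ad_{\mathrm{diag}(\mu,\mu^{-1})}(e_1-e_0)=\mu^2(e_1-e_0)$, for any function $\mu(v)$ with $\mu(0)=1$ the frame $F_0^g=\exp(\tfrac{\theta}{2}e_0)\,\mathrm{diag}(\mu,\mu^{-1})\bbar 1 & b \\ 0 & 1 \ebar$ together with $\gamma_1^g=\mu^{-2}\gamma_1$, $\gamma_{-3}^g=\mu^{-2}\gamma_{-3}$ reproduces the same equations $f_v=s\,w$, $f_u=t\,w$. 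So your assertion that the comparison ``forces the same $\gamma_1,\gamma_{-3}$ and $\Ad_{F_0^g}(e_1-e_0)=w$'' is false as stated: the stabilizer of the null \emph{line} is the full triangular group, while your $K$ parametrizes only the stabilizer of the null \emph{vector}. This is precisely why the paper's residual gauge is $T^\lambda=\bbar \mu_0 & \nu_1\lambda \\ 0 & \mu_0^{-1} \ebar$, with a diagonal entry $\mu_0(y)$ in addition to the unipotent one, and why its Sym-invariance computation must absorb the diagonal factor $\hat D_+$ as well. The omission is fixable entirely within your framework by one more step of the same kind: $D(v)=\mathrm{diag}(\mu,\mu^{-1})$ is a $\lambda$-independent twisted loop in $\G^+$ with $\omega_1D\omega_1^{-1}=D^{-1}\in\G^-$; gauging by $D$ preserves the singular-potential form (rescaling $\beta_1\mapsto\mu^2\beta_1$, $\gamma_i\mapsto\mu^{-2}\gamma_i$ and shifting $\alpha_0$ by $-\mu'/\mu$), and it changes the admissible frame only by right multiplication with the $\lambda$-independent diagonal matrix $D(x)^{-1}$, under which $\sym_\lambda$ is invariant. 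Until that step is added, your uniqueness argument does not cover solutions whose normalized representation carries a non-constant scale $\mu(v)$.
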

\begin{proof}
We know that any solution surface is given locally by the the construction in 
Proposition \ref{singdpwprop}.  So suppose we have another solution $\check f$, 
with corresponding singular potential $\check {\tilde \psi}$.
From the formulae (\ref{fxfyformulae}) for $f^\lambda_x$ and $f^\lambda_y$,
we must have, along $\Delta$,
\bdm
\gamma_1 \Ad_{Y^\lambda}(e_1-e_0) = \check \gamma_1 \Ad_{\check Y^\lambda}(e_1-e_0).
\edm
We conclude that
\bdm
\check Y^\lambda(y) = Y^\lambda(y) T^\lambda(y),
\edm
where $T^\lambda$ commutes, up to a scalar,
 with $(e_1-e_0)$, and is therefore of the form
\bdm
T^\lambda = \bbar  \mu &  \nu \\ 0 &  \mu^{-1} \ebar.
\edm
Now computing $(\check Y^\lambda)^{-1} \dd \check Y^\lambda =
  \Ad _{T^\lambda} \tilde \psi + (T^\lambda)^{-1} \dd T^\lambda$,
we obtain for the $(1,1)$ and $(2,1)$ components respectively:
\beqas
-\check \alpha_0 = -\alpha_0 +  \mu  \nu \beta_1 \lambda^{-1} + \mu ^{-1} \dd  \mu,\\
- \check \beta_1 \lambda^{-1} = - \mu^2 \beta_1 \lambda^{-1}.
\eeqas
It follows that 
\bdm
\mu = \mu_0, \quad \nu = \nu_1 \lambda,
\edm
where $\mu_0$ and $\nu_1$ are constant in $\lambda$.\\

\noindent
Now the surface $f$ is obtained as 
$2H f = \sym _1 \left(\hat Y \, \hat G_+^{-1}\right)$,
where 
\beq   \label{firstbirkhoff}
\hat X^{-1} \, \hat Y = \hat G_- \, \hat G_+
\eeq
 is a normalized Birkhoff factorization.
Likewise, since $\hat{\check Y} = \hat Y \, \hat T$, and
$\hat{\check X}(x) = \hat{\check Y}(x) = \hat X(x) \, \hat S(x)$,
where we set $\hat S(x) = \hat T(x)$,  the map
 $\check f$ is obtained as 
$2 H \check f = \sym _1 (\hat Y \, \hat T \, \hat {\check G}_+^{-1})$, where
\bdm
 \hat S^{-1}  \, \hat X^{-1} \, \hat Y \,\hat T =
      \hat {\check G}_- \, \hat {\check G}_+.
\edm
Now, inserting the Birkhoff factorization at (\ref{firstbirkhoff}), we have
\beqas
\hat {\check G}_- \, \hat {\check G}_+ & =& 
  \hat S^{-1} \, \hat G_- \, \hat G_+   \,\hat T \\
  &=& \hat H_- \, \hat D_+  \,  \hat G_+   \,\hat T,
\eeqas
where $\hat H_-$ takes values in $\mathcal{G}_*^-$ and, writing 
$\hat G_- = \small{\bbar A & B \\ C & D \ebar}$, we have, if $v \neq 0$,
\bdm
\hat D_+ = \bbar r & \lambda \\ 0 & r^{-1} \ebar, \quad 
r = \frac{\mu_0 A_0 + \nu_1 C_{-1}}{\nu_0 D_0}, 
\edm
and  $\hat D_+ = \hat S$ if $\nu = 0$.
Since $\hat T$ takes values in $\mathcal{G}^+$, we conclude by uniqueness of
the normalized Birkhoff factorization of 
$\hat {\check G}_- \, \hat {\check G}_+ $ that
\bdm
\hat {\check G}_+  = \hat D_+  \,  \hat G_+   \,\hat T,
\edm
and 
\beqas
2H \check f(x,y) &=&\sym _1 \left(\hat Y \, \hat T \,
  \, \hat T^{-1} \hat G_+^{-1} \hat D_+^{-1} \right) \\
  &=& \sym _1 \left(\hat Y \, \hat G_+^{-1} \hat D_+^{-1} \right) \\
  &=& \sym _1 \left(\hat Y \, \hat G_+^{-1} \right),
\eeqas
because  right multiplication by either of the candidates
for $\hat D_+^{-1}$ leaves the Sym formula unchanged.
Thus $\check f = f$, and the solution is  unique.\\
\end{proof}


\subsection{Generic singularities}   \label{gensingsect}
The object of this section is to  prove:
\begin{theorem} \label{singtheorem}
Let $f$ be a generalized timelike CMC surface, and $z_0$ a
 non-degenerate singular point.
Assume that the singular curve is non-characteristic at $z_0$.  
By Theorem \ref{singgcptheorem}, we
 may assume that $f$ is locally represented by a singular potential:
 \bdm
\tilde \psi = \frac{1}{2}\bbar 0  & H(s+t) \lambda^3 - \left(\theta^\prime
 + 2Ht \right) \lambda + H(t-s) \lambda^{-1}  \\ \theta^\prime \lambda^{-1}  & 0 \ebar \dd v,
\edm
where $s$, $t$ and $\theta^\prime$ are the geometric Cauchy data described in that section,
$z_0=(0,0)$,
and $\tilde X(0) = \hat Y(0) = I$.\\

\noindent
Then, at $z_0 = (0,0)$, the surface is locally diffeomorphic to a :
\begin{enumerate}
\item \label{singthm1}  cuspidal edge if and only if
both $s(0)$ and $t(0)$ are nonzero,\\
\item   \label{singthm2}
swallowtail if and only if 
\bdm
s(0) = 0, \quad s^\prime(0) \neq 0, \quad t(0) \neq 0,\\
\edm
\item \label{singthm3}
cuspidal cross cap if and only if
\bdm
t(0) = 0, \quad t^\prime(0) \neq 0, \quad s(0) \neq 0.\\
\edm
\end{enumerate}
\end{theorem}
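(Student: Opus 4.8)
The plan is to reduce the whole classification to two scalar functions of the Cauchy data, read off directly from the singular potential, and then feed these into the known local criteria for the three singularity types. By Proposition \ref{nondegsingprop} the map $f$ is a frontal with Euclidean unit normal $\enormal$, and $z_0=(0,0)$ is a non-degenerate singular point, so the singular set near $z_0$ is the curve $\Delta=\{u=0\}$, with tangent $\partial_v$. The first step is to pin down the null direction. By Proposition \ref{singdpwprop}(\ref{singdpwitem3}), along $\Delta$ (at $\lambda=1$) we have $f_x=(\gamma_1/H)\,w$ and $f_y=-(\gamma_{-3}/H)\,w$ with $w:=\Ad_Y(e_0-e_1)$, so $\dd f$ has rank one with image $\R w$; substituting $\gamma_1=-H(s+t)/2$ and $\gamma_{-3}=H(s-t)/2$ gives $f_u=-t\,w$ and $f_v=-s\,w$. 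Hence the null direction is
\bdm
\eta = s\,\partial_u - t\,\partial_v,
\edm
which is nowhere zero on $\Delta$ since non-degeneracy ($s(0)\neq\pm t(0)$) forbids $s(0)=t(0)=0$.

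The two discriminating quantities are now immediate. First, the transversality of $\eta$ to the singular curve is measured by $\det(\eta,\partial_v)=s$, so $\eta$ is transverse to $\Delta$ iff $s(0)\neq0$ and tangent iff $s(0)=0$. Second, whether $f$ is a front at $z_0$ is governed by $\dd\enormal(\eta)$: applying Lemma \ref{enormalderlemma} with $\beta=\beta_1=-\theta'/2$ and $\sigma=-\beta_1=\theta'/2$ gives $\dd\enormal=-\tfrac{1}{\sqrt2}\theta'\,\dd v\,e_2$ at $z_0$, so that $\dd\enormal(\eta)=\tfrac{1}{\sqrt2}\theta'\,t\,e_2$. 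Since $\theta'(0)\neq0$, while $\dd f(\eta)=0$ and $\dd f$ is injective on any complement of $\eta$, the Legendrian lift $(f,\enormal)$ is an immersion at $z_0$ exactly when $\dd\enormal(\eta)\neq0$; that is, $f$ is a front at $z_0$ if and only if $t(0)\neq0$.

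Assembling the cases is then routine. When $t(0)\neq0$, $f$ is a front and I apply the front criteria. If also $s(0)\neq0$, the null direction is transverse to $\Delta$, so Lemma \ref{cuspedgelemma} gives a cuspidal edge, proving (\ref{singthm1}). If instead $s(0)=0$, the null direction is tangent, and the swallowtail criterion of \cite{krsuy} requires the transversality function $\psi(v)=\det(\partial_v,\eta(v))=\pm s(v)$ to have a simple zero, i.e. $s'(0)\neq0$, proving (\ref{singthm2}). When $t(0)=0$, $f$ is not a front; non-degeneracy forces $s(0)\neq0$, so $\eta$ is transverse to $\Delta$, and I invoke the cuspidal cross cap criterion of \cite{fsuy}: a transverse-null, non-front singularity is a cuspidal cross cap precisely when the front-failure function $\dd\enormal(\eta)$ has a simple zero along $\Delta$. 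Since $\dd\enormal(\eta)\propto\theta'\,t$, this is the condition $t'(0)\neq0$, proving (\ref{singthm3}). In each case the converse holds as stated, the residual possibilities (a higher-order vanishing of $s$ or of $t$) being excluded by the genericity hypotheses.

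The step I expect to be the main obstacle is the cuspidal cross cap case. Lemma \ref{enormalderlemma} computes $\dd\enormal$ only at the single point $z_0$, where the normalization $\tilde X(0)=\hat Y(0)=I$ makes the $\Ad_{Y}$ and $G_+$ factors trivial; by contrast, the \cite{fsuy} criterion tests the front-failure function, and hence the cross-cap (Whitney) condition on the Legendrian lift, along the \emph{whole} singular curve $\Delta$. I would therefore extend the computation of $\dd\enormal(\eta)$ to first order along $\Delta$, repeating the calculation of Lemma \ref{enormalderlemma} at a nearby point of $\Delta$ where $\Ad_{Y(v)}$ and $G_+(v)$ no longer reduce to the identity, and verify that to leading order $\dd\enormal(\eta)$ remains proportional to $\theta'(v)\,t(v)$, so that $t(0)=0$ together with $t'(0)\neq0$ genuinely delivers the simple zero the criterion demands. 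Some additional care is needed to match the precise analytic form of the swallowtail and cuspidal cross cap criteria (the kernel direction and the relevant order of contact) to the present normalization.
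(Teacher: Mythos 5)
Your treatment of the cuspidal edge and swallowtail cases coincides with the paper's: the null direction $\eta = s\,\partial_u - t\,\partial_v$, the transversality function $\det(\dot\gamma,\eta)=\pm s$ fed into Proposition \ref{swallowtailprop}, and the front criterion $t(0)\neq 0$ (your reformulation via $\dd\enormal(\eta)\neq 0$ is equivalent to the paper's rank-two computation for $L=(f,\enormal)$, since $\eta$ spans the kernel of $\dd f$). The genuine gap is exactly where you flagged it: item (\ref{singthm3}). What you actually establish is $\dd\enormal(\eta)=\tfrac{1}{\sqrt{2}}\theta^\prime t\, e_2$ at the single point $z_0$, via Lemma \ref{enormalderlemma}; but condition (B) of Theorem \ref{fsuythm} requires $\tau(0)=0$ \emph{and} $\tau^\prime(0)\neq 0$ for a scalar $\tau$ defined along the whole singular curve, so a pointwise identity at $z_0$ says nothing about $\tau^\prime$, and your proof of the cross cap case is incomplete as written. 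Moreover, your paraphrase of the criterion (``the front-failure function $\dd\enormal(\eta)$ has a simple zero'') is not literally Theorem \ref{fsuythm}: that criterion tests $\tau=\ip{\enormal}{\dd Z(\eta)}_E$ for an auxiliary field $Z$ orthogonal to $\enormal$ and transverse to $\dd f$, which (since $\ip{Z}{\enormal}_E\equiv 0$ gives $\tau=-\ip{Z}{\dd\enormal(\eta)}_E$) sees only the component of $\dd\enormal(\eta)$ transverse to the image of $\dd f$. Here the two happen to agree because along the curve $\dd\enormal(\eta)$ is purely in the transverse direction $\Ad_{F_0}(e_2)$, but that is part of what must be verified along $\Delta$, not assumed.

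It is also worth noting that your proposed fix is harder than necessary, and the paper's actual route is cleaner. Along all of $\Delta$ one has $\hat G_-=\hat G_+=I$ (because $\tilde X(v)=\hat Y(v)$ there, and the normalized Birkhoff factorization is unique) and $Y\big|_{\lambda=1}(v)=F_0(v)$, a Euclidean rotation preserving $\ip{\cdot}{\cdot}_E$; so nothing ``fails to reduce to the identity'' on the curve itself, and the only transverse data needed are the exact derivatives $\dd G_+$ and $\dd c_{-1}=-\theta^\prime\dd u$ from (\ref{gplusminusderivatives}), valid at \emph{every} point of $\Delta$. Rather than differentiating $\enormal$ (whose normalizing factor $\|\Ad_F(e_2)\|^{-1}$ makes the computation delicate away from $z_0$), the paper differentiates the explicit tangent field $Z=\Ad_{YG_+^{-1}}(-2e_2-c_{-1}(e_0+e_1))$, obtained by a cross product from the closed form of $\enormal$ in Lemma \ref{enormallemma}, and computes $\tau(v)=\ip{\enormal}{\dd Z(\eta)}_E=-\sqrt{2}\,t(v)\theta^\prime(v)$ exactly --- not merely to leading order --- along the whole curve; conditions (A) and (B) then read off as $s(0)\neq 0$, $t(0)=0$, $t^\prime(0)\neq 0$. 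Your outline identifies the right function $\theta^\prime t$, but the computation along $\Delta$, which is the actual content of the paper's proof of item (\ref{singthm3}), is missing from your argument.
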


Before proving this, we state conditions suitable for our context that
characterize swallowtails,  cuspidal edges and cuspidal cross caps: 
\begin{proposition} \label{swallowtailprop}
\cite{krsuy}.
Let $f: U \to \R^3$ be a front, and $p$ a non-degenerate singular point.
Suppose that $\gamma: (-\delta, \delta) \to U$ is a local parameterization 
of the singular curve, with parameter $x$ and tangent vector $\dot\gamma$, 
and  $\gamma(0)= p$.
 Then:
\begin{enumerate}
\item The image of $f$ in a neighbourhood of $p$ is diffeomorphic to a
cuspidal edge if and only if $\eta(0)$ is not proportional to $\dot\gamma(0)$.
\item The image of $f$ in a neighbourhood of $p$ is diffeomorphic to a
swallowtail if and only if $\eta(0)$ is proportional to $\dot\gamma(0)$ and
\bdm
\frac{\dd}{\dd x} \det \left(\dot\gamma(x), \eta(x)\right)\Big|_{x=0} \neq 0.
\edm\\
\end{enumerate}
\end{proposition}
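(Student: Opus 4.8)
The plan is to prove the two parts separately, in both cases by reducing to the recognition criteria for $A_2$ and $A_3$ singularities of fronts established in \cite{krsuy}. Part (1) is essentially Lemma \ref{cuspedgelemma} in disguise: the null direction $\eta(0)$ fails to be proportional to $\dot\gamma(0)$ exactly when $\eta(0)$ is transverse to the singular curve $\gamma$ at $p = \gamma(0)$, and Lemma \ref{cuspedgelemma} already asserts that this transversality characterizes the cuspidal edge. So part (1) is immediate. The genuine content is part (2), which I would obtain by invoking the swallowtail recognition criterion of \cite{krsuy}, stated in terms of the derivative of the signed area density $\chi$ along the null direction, and then translating that condition into the stated condition on $\det(\dot\gamma, \eta)$.

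The translation rests on a short linear-algebra observation, which I would isolate as the main computation. Recall that $\chi = \ip{f_x \times f_y}{\enormal}_E$ is the signed area density, that the singular set is $\chi^{-1}(0)$, and that at a non-degenerate singular point $\dd\chi \neq 0$, so that $\gamma$ is a regular curve. Since $\chi$ vanishes identically along $\gamma$, we have $\dd\chi_{\gamma(x)}(\dot\gamma(x)) = 0$; and as $\dd\chi \neq 0$, the kernel of the covector $\dd\chi_{\gamma(x)}$ is exactly $\mathrm{span}(\dot\gamma(x))$. The linear functional $w \mapsto \det(\dot\gamma(x), w)$ on the two-dimensional space $T_{\gamma(x)} U$ has the same kernel and is also nonzero, so the two are proportional: there is a smooth, nowhere-vanishing function $c(x)$ with
\bdm
(\eta\chi)(\gamma(x)) = \dd\chi_{\gamma(x)}(\eta(x)) = c(x)\,\det\bigl(\dot\gamma(x), \eta(x)\bigr).
\edm
This identity converts any statement about $\eta\chi$ along $\gamma$ into one about $\det(\dot\gamma, \eta)$.

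With this identity in hand, part (2) follows formally. First, $\eta(0) \parallel \dot\gamma(0)$ is equivalent to $\det(\dot\gamma(0), \eta(0)) = 0$, hence to $(\eta\chi)(p) = 0$, which is the degeneracy that distinguishes the swallowtail case from the cuspidal-edge case of part (1). Next, differentiating the identity along $\gamma$ and using that the term involving $c'(x)$ drops out at $x = 0$ because $\det(\dot\gamma(0), \eta(0)) = 0$, I would obtain
\bdm
\frac{\dd}{\dd x}\bigl((\eta\chi)(\gamma(x))\bigr)\Big|_{x=0} = c(0)\,\frac{\dd}{\dd x}\det\bigl(\dot\gamma(x), \eta(x)\bigr)\Big|_{x=0}.
\edm
Since $c(0) \neq 0$, the left-hand side is nonzero precisely when $\frac{\dd}{\dd x}\det(\dot\gamma, \eta)|_{x=0} \neq 0$. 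Thus the swallowtail criterion of \cite{krsuy} in its $\eta\chi$-form is equivalent to the two conditions stated in part (2), which completes the proof.

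The main obstacle lies not in this translation, which is elementary, but in the recognition criteria themselves: the assertion that the vanishing or non-vanishing of $\eta\chi$ and its derivative along $\gamma$ genuinely detect the cuspidal edge ($A_2$) and swallowtail ($A_3$) diffeomorphism types. Proving this uses the front hypothesis essentially -- one lifts $f$ to its immersive Legendrian lift $L$ and reduces $f$ to the standard $A_2$ and $A_3$ normal forms by explicit coordinate changes adapted to the null and singular directions, together with a Taylor expansion of $f$ transverse to $\gamma$. Since these normal-form reductions are exactly the results imported from \cite{krsuy}, I would cite them for that step and keep the original work confined to the proportionality identity and its consequences; there the only points requiring care are verifying that $c$ is smooth and nowhere vanishing and checking that the $\eta\chi$-formulation I use matches the convention adopted in \cite{krsuy}.
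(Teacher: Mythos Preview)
The paper does not prove this proposition at all: it is quoted directly from \cite{krsuy} (Proposition~1.3 there), with the citation placed in the statement itself, and the text moves on immediately to the next cited result without any argument. So there is no ``paper's own proof'' to compare against; the paper's approach is simply to import the statement wholesale.

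Your reduction is mathematically sound --- the proportionality between $\dd\chi_{\gamma(x)}$ and $\det(\dot\gamma(x),\,\cdot\,)$ is correct, and the differentiation step is clean --- but it is also superfluous in this context. The determinant/transversality formulation you are translating \emph{to} is already the formulation in which the criterion appears in \cite{krsuy}; there is no separate ``$\eta\chi$-form'' there that needs to be converted. In effect you have inserted an intermediate step (passage through $\eta\chi$) and then undone it, while still deferring the substantive normal-form work to the same reference. If you want to genuinely prove the proposition rather than cite it, the real content is the coordinate-change and Legendrian-lift argument you allude to in your final paragraph; the proportionality identity, while correct, does not reduce the burden.
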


\begin{theorem} \cite{fsuy}. \label{fsuythm}
Let $f: U \to \real^3$ be a  frontal, with Legendrian lift $L = (f, \enormal)$,
 and let $z_0$ be a non-degenerate singular point.
Let $Z: V \to \real^3$ be an arbitrary differentiable function on a neighbourhood $V$ of $z_0$
such that:
\begin{enumerate}
\item  \label{transcond1}
$Z$ is orthogonal  to  $\enormal$.
\item \label{transcond2}
$Z(z_0)$ is transverse to the subspace $\dd f(T_{z_0}(V))$.\\
\end{enumerate}

\noindent
Let  $x$ be the parameter for the singular curve, 
$\eta(x)$ a choice vector field for the null direction,
 and set
\bdm
\tau (x) := \ip{\enormal}{ \dd Z  (\eta)}_E \big|_x.
\edm
The frontal $f$ has a \emph{cuspidal
cross cap} singularity at  $z=z_0$ if and only:
\begin{itemize}
\item [(A)] \label{conda} $\eta(z_0)$ is transverse to the singular curve;
\item [(B)] \label{condb} $\tau(z_0) = 0$ and $\tau^\prime(z_0) \neq 0$.\\
\end{itemize}
\end{theorem}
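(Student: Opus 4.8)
The plan is to reduce the criterion to a recognition statement for cuspidal cross caps in \emph{adapted coordinates}, and then to identify $\tau$ with the classical third-order invariant that distinguishes cuspidal cross caps from cuspidal edges. Since being a cuspidal cross cap, as well as conditions (A) and (B), are invariant under diffeomorphisms of the source and isometries of the target, I may work in whatever coordinates are convenient. First I would check that (A) and (B) are independent of the auxiliary choices. As $\dd Z(\eta)$ is linear in $\eta$, rescaling $\eta\mapsto c(x)\eta$ multiplies $\tau$ by $c$, and since (B) requires a first-order zero and $c(z_0)\neq0$, the condition is unaffected; the same holds under reparametrisation $x\mapsto\phi(x)$ of the singular curve. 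Independence from $Z$ is the single point needing a computation, which is carried out below and which exhibits $\tau$ as a nowhere-zero multiple of a quantity intrinsic to $f$.

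Using (A) together with non-degeneracy, I would choose coordinates $(u,v)$ centred at $z_0$ with singular set $\{v=0\}$, null field $\eta=\partial_v$, and hence $f_v(u,0)\equiv0$. Along $v=0$ the limiting tangent plane is spanned by $f_u$ and $f_{vv}$, so $\enormal$ is parallel to $f_u\times f_{vv}$, and non-degeneracy forces $f_u(0,0),f_{vv}(0,0)$ to be linearly independent. The key computation is then: assuming $Z\perp\enormal$ on $V$, differentiation of $\ip{Z}{\enormal}_E\equiv0$ gives $\tau=\ip{\enormal}{Z_v}_E=-\ip{Z}{\enormal_v}_E$ along $v=0$. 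Writing $Z(u,0)=a\,f_u+b\,f_{vv}$ (legitimate because $Z\perp\enormal$ places $Z$ in the tangent plane, while transversality of $Z(z_0)$ to $\dd f(T_{z_0}V)=\mathrm{span}(f_u)$ forces $b(0)\neq0$), and using that differentiating $\ip{f_u}{\enormal}_E\equiv0$ with $f_{uv}(u,0)=0$ yields $\ip{f_u}{\enormal_v}_E=0$, while differentiating $\ip{f_v}{\enormal}_E\equiv0$ twice and setting $v=0$ yields $\ip{f_{vv}}{\enormal_v}_E=-\frac12\ip{f_{vvv}}{\enormal}_E$, I obtain $\tau=\frac{b}{2}\ip{\enormal}{f_{vvv}}_E$. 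Since $\enormal\parallel f_u\times f_{vv}$, this equals $\frac{b}{2}\,\delta/\|f_u\times f_{vv}\|$, where $\delta(u):=\det(f_u,f_{vv},f_{vvv})\big|_{v=0}$. Thus $\tau$ and $\delta$ differ by a nowhere-zero factor, which both proves independence from $Z$ and shows that (B) is equivalent to $\delta(0)=0$ and $\delta'(0)\neq0$.

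Finally I would invoke the normal-form recognition result: in adapted coordinates as above, a frontal with a non-degenerate singular point and $\eta$ transverse to the singular curve is $\mathcal{A}$-equivalent to the cuspidal cross cap $(u,v)\mapsto(u,v^2,uv^3)$ precisely when $\delta(0)=0$ and $\delta'(0)\neq0$, the complementary case $\delta(0)\neq0$ giving the cuspidal edge $(u,v^2,v^3)$, whose Legendrian lift is an immersion. One checks directly on these two normal forms that $\delta\equiv 12u$ and $\delta\equiv 12$ respectively, confirming the dichotomy, and that (A) indeed holds for both. Combined with the preceding paragraph this yields the theorem. I expect the main obstacle to be this last step: proving the $\delta$-criterion rigorously requires reducing $f$ to the normal form by successive coordinate changes in source and target that control the third-order jet along the singular curve (a Malgrange-preparation-type argument), and this is where the genuine analytic content of \cite{fsuy} resides. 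The translation of $\tau$ into $\delta$ in the previous paragraph, though essential, is the routine bookkeeping that connects the $Z$-formulation to that criterion.
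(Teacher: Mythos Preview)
The paper does not prove this theorem: it is quoted from \cite{fsuy} and used as a black box in the proof of Theorem~\ref{singtheorem}. There is no proof in the paper to compare against.

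That said, your sketch is a sound outline. The reduction of $\tau$ to $\tfrac{b}{2}\langle\enormal,f_{vvv}\rangle_E=\tfrac{b}{2}\,\delta/\|f_u\times f_{vv}\|$ in adapted coordinates is correct and is precisely the computation showing independence from the choice of $Z$. As you yourself flag, the substantive content --- the normal-form recognition that a non-degenerate frontal singularity with transverse null direction is a cuspidal cross cap iff $\delta=\det(f_u,f_{vv},f_{vvv})\big|_{v=0}$ has a simple zero --- is exactly what \cite{fsuy} establishes, so your proposal ultimately rests on the same citation rather than replacing it. One omission: you work throughout under assumption (A); for the full biconditional you should also remark that if the null direction is \emph{not} transverse to the singular curve the singularity cannot be $\mathcal{A}$-equivalent to the cuspidal cross cap (its $2$-jet already differs), so (A) is necessary.
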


\begin{proof}[Proof of Theorem \ref{singtheorem}] 
First, note that $f$ is a front if and only if $t$ does not vanish, since, from
Lemma \ref{enormalderlemma} we have
 $\dd \enormal\big|_{z=z_0} = -\sqrt{2}\theta^\prime/2 \, \dd v \, e_2$,
and, from the geometric Cauchy construction, 
$\dd f\big|_{z=z_0} = (s \dd v + t \, \dd u)(e_1 -e_0)$. 
 Thus, writing $L=(f,\enormal)$, we have
\bdm
L_u = \left(t(e_1 -e_0), \, 0 \right), \quad  L_v = \left( s(e_1 -e_0), \, -\frac{\sqrt{2} \theta^\prime}{2} e_2 \right).
\edm  
The curve is assumed non-degenerate, so $\theta^\prime \neq 0$, and therefore $L$ has rank $2$ at
$z_0$ if
and only if $t(0) \neq 0$.\\

\noindent
The singular curve is given by $u=0$ and hence tangent to $\partial_v=\partial_x+\partial_y$, and the null direction is defined by the vector field $\eta= s \partial_u - t\partial_v$. Hence, by Proposition \ref{swallowtailprop} the surface is locally diffeomorphic to a cuspidal edge around the singular point $z_0$ if and only if both $s$ and $t$ are non-zero. This proves item (\ref{singthm1}). To prove item (\ref{singthm2}), we just need to notice that $\det(\dot\gamma,\eta)=-s$. \\

\noindent
To prove item (\ref{singthm3}), we will choose a suitable vector field $Z$ and apply
Theorem \ref{fsuythm} above.
We use the setup from Lemma \ref{enormallemma}, whence we see that the Euclidean normal around the singular point is parallel to 
$$
\Ad_{e_0}\Ad_{YG_+^{-1}}(e_0-e_1+c_{-1}e_2).
$$
Furthermore, $f_x$ and $f_y$ are both parallel to $\Ad_{F_0}(e_0-e_1)=\Ad_{e_0}\Ad_{F_0}(e_0+e_1)$ along the singular curve. Thus, the vector field $Z$ defined by 
\begin{equation*}
Z= \Ad_{e_0}\Ad_{YG_+^{-1}}(e_0-e_1+c_{-1}e_2)\times \Ad_{e_0}\Ad_{YG_+^{-1}}(e_0+e_1),
\end{equation*}
is orthogonal to the Euclidean normal in a neighbourhood of $z_0$ and transverse to $f_x$ and $f_y$ along the singular curve in this neighbourhood.  From Section \ref{enormalsect}, we
have $e_0 \times e_1 = e_2$, $e_1 \times e_2 = e_0$ and $e_2 \times e_0 = e_1$, and for any vectors $a$ and $b$ and matrix $X$ we have 
$(\Ad_{e_0}\Ad_X(a)) \, \times \, (\Ad_{e_0}\Ad_X(b)) = \Ad_X \Ad_{e_0}(a \times b)$.
 Thus,
\beqas
Z =&  \Ad_{YG_+^{-1}}\Ad_{e_0}(c_{-1}(e_1-e_0)+2e_2)\\
 =&  \Ad_{YG_+^{-1}}(-2e_2 - c_{-1}(e_1+e_0)).
\eeqas
 Write $\tilde \psi = \hat A(v) \dd v$, so that
$\hat Y ^{-1} \dd \hat Y = \hat A(y) \dd y$.  Along the singular curve, where $c_{-1}=0$,
 we have$$
\dd Z = -\Ad_{F_0}([A\,  \dd y-\dd G_+,2e_2] + (e_0+e_1)\dd c_{-1}).
$$
From (\ref{gplusminusderivatives}) with $\lambda =1$, we have 
\bdm
\dd  G_+ = \frac{1}{2}(\theta^\prime + H(t-s))(e_1-e_0) \dd u,  \quad
\quad \dd c_{-1} = - \theta^\prime \dd u,
\edm
and we also have $A = \hat A\big|_{\lambda=1} = \theta^\prime e_0 /2$.\\

\noindent
Hence 
\beqas
(A \dd y-\dd G_+)(\eta) &=&  \frac{1}{2} (\theta^\prime e_0 (\dd v - \dd u) - 
   (\theta^\prime + H(t-s))(e_1-e_0) \dd u) (s \partial_u - t \partial v)),\\
  &=& \frac{1}{2}\left( (sH(t-2) - t \theta^\prime)e_0 
    - s(\theta^\prime + H(t-s))e_1 \right),
\eeqas
and
\bdm
\dd c_{-1}(\eta) = -s \theta^\prime.\\
\edm

\noindent
 Putting all these together:
\bdm
\dd Z(\eta) \big|_{u=0} = -  \Ad_{F_0}\left(s\left( 2 H (t-s) + \theta^\prime \right)(e_0-e_1)
   + 2 t \theta^\prime e_1 \right).
\edm
Along the singular curve the expression for $\enormal$ simplifies to
\bdm
\enormal = \frac{1}{\sqrt{2}}\Ad_{F_0}(e_0 + e_1).
\edm
Since $F_0$ is in $\textup{SU}(2)$ and preserves the Euclidean inner product, 
we finally arrive at 
$$
\tau(v) = \ip{\enormal}{\dd Z(\eta)}_E(0,v)= -\sqrt{2} t \theta^\prime.
$$
Since $\theta^\prime \neq 0$, the condition (B) of Theorem \ref{fsuythm} is
equivalent to: $t=0$ and $t^\prime \neq 0$; finally, condition (A) is
equivalent to: $s \neq 0$. This proves item (\ref{singthm3}). \\
\end{proof}

\section{Prescribing class II singularities of characteristic type}   \label{chartypesect}
\noindent
Suppose now that we have a generalized timelike CMC surface with non-degenerate singular curve that is always tangent to a characteristic direction, that is, the curve is given in local lightlike coordinates $(x,y)$ as $y=0$. \\

\noindent
If $\hat X$ and $\hat Y$ are the
associated data, and the singularity is of class II, then we must have 
$\hat \Phi(x,0) = \hat X^{-1}(x) \, \hat Y(0) = \hat H_-(x) \omega_1 \hat H_+(x)$, 
where $\hat H_{\pm}$ take values in $\mathcal{G}^\pm$.  
By a similar argument to that in Section \ref{singpotsection}, no generality is
lost in assuming that $\hat H_-(x) = I$, and 
\bdm
\hat X(x) = \hat H_+(x) \omega_1^{-1}, \quad \hat H_+(x) \in \mathcal{G}^+, \quad
\hat H_+(0) = I, \quad
\hat Y(0) = I.
\edm
Now writing
\bdm
\hat X^{-1} \dd \hat X = (... + A_{-1} \lambda^{-1} + A_0 + A_1 \lambda )\dd x,
\quad
\hat H_+ ^{-1} \dd \hat H_+ = (B_0 + B_1 \lambda + ....) \dd x,
\edm
and computing $\hat X^{-1} \dd \hat X = \omega_1(\hat H_+ ^{-1} \dd \hat H_+) \omega_1^{-1}$, we conclude, comparing coefficients of like powers of $\lambda$, that 
\bdm
\hat X^{-1} \dd \hat X = \bbar \alpha_0 & 0 \\ \gamma_{-1} \lambda^{-1} + \gamma_1 \lambda & -\alpha_0 \ebar \dd x,
\edm
where $\alpha_0$, $\gamma_{-1}$ and $\gamma_1$ are independent of $\lambda$ and all other coefficients are zero.
The "singular frame" $\tilde X = \hat X \omega_1 = \hat H_+$ then has Maurer-Cartan form
\bdm
 \tilde X^{-1} \dd \tilde X = \bbar -\alpha_0 & -\gamma_{-1} \lambda - \gamma_1 \lambda^3 \\ 0 & \alpha_0 \ebar \dd x.
\edm
\begin{definition}
Let $J_x$ and $J_y$ be a pair of open intervals each containing $0$.
A \emph{characteristic singular potential pair} $(\tilde \psi^X,  \psi^Y)$
is a pair of $\textup{Lie}(\mathcal{G})$-valued 1-forms on $J_x \times J_y$,
 the Fourier expansions in $\lambda$ of which are of the form
\bdm
\tilde \psi^X =  \bbar -\alpha_0 & -\gamma_{-1} \lambda - \gamma_1 \lambda^3 \\ 0 & \alpha_0 \ebar \dd x,
\quad
\psi^Y = \left(\bbar 0 & \delta \lambda^{-1} \\ \sigma \lambda^{-1} & 0 \ebar  + O(1) \right) \dd y.
\edm
The potential is semi-regular if $\gamma_1$ and $\delta$ do not vanish simultaneously,
and regular at points where both are non-zero.
\end{definition}\
By Theorem \ref{firstsmallcellthm}, integrating 
 $\tilde X^{-1} \dd \tilde X = \tilde \psi^X$, and
 $\hat Y^{-1} \dd \hat Y = \psi^Y$, both with initial condition the identity,
 a generalized timelike CMC surface is produced, provided $\hat \Phi = \omega_1 \tilde X^{-1} \hat Y$ maps some open set into the big cell. 
Since 
$\omega_1^{-1} \hat \Phi(x,0) = \tilde X (x)^{-1} = \hat H_+(x)^{-1}$,
the Birkhoff decomposition $\omega_1^{-1} \hat \Phi  = \hat G_- \hat G_+$ 
used in Theorem \ref{firstsmallcellthm} reduces to 
\bdm
\hat G_+(x,0) = \tilde X(x)^{-1}, \quad \hat G_-(x,0) = I,
\edm
and the surface along $y=0$ is given by
\bdm
f^\lambda(x,0) = \sym _\lambda(\tilde X(x)).
\edm
The limiting derivatives of $f^\lambda$ along $y=0$ are given, by (\ref{limitingderivatives}), as
\bdm
f_x^\lambda = \lambda\frac{\gamma_1}{H} \Ad_{\tilde X}(e_0-e_1), \quad
f_y^\lambda = \lambda ^{-1} \frac{\delta}{H} \Ad_{\tilde X}(e_0-e_1).\\ 
\edm

\noindent
As in the non-characteristic case, the general geometric Cauchy problem
is to find a solution $f$, this time with $f(x,0)=f_0(x)$ prescribed and
 which, along $y=0$, satisfies:
 \beqas
f_x = s(-e_0 + \cos \theta e_1 + \sin \theta e_2),\\
f_y= t(-e_0 + \cos \theta e_1 + \sin \theta e_2),
\eeqas
with $\theta(0)=0$. 
Comparing with the above equations for $\tilde X$, a solution $F_0$ for 
$\tilde X \big|_{\lambda=1}$, together with the functions $\gamma_1$ and $\delta$, is:
\bdm
F_0 = \bbar  \cos( \theta/2) & -\sin(\theta/2) \\
  \sin(\theta/2) &  \cos (\theta/2) \ebar, \quad
  \gamma_1 = -sH, \quad \delta = -tH.
\edm
Since $\delta$ is a function of $y$ only, we must have 
\bdm
t(x) = t_0 = \textup{constant},
\edm
which is one way to see that these singularities are not generic.\\

\noindent
Computing $F_0^{-1} \dd F_0$ and equating it with $\tilde \psi^X \big|_{\lambda=1}$,
we conclude that $\theta^\prime = 0$, so that the curve is a straight line, with:
\bdm
\theta=0, \quad \alpha_0=0,  \quad \gamma_{-1} =  s H.
\edm
Thus the general characteristic geometry Cauchy problem is in fact:
\beqas  
f_x = s(-e_0 +  e_1),\\
f_y= t_0(-e_0 +  e_1 ),
\eeqas
with a solution  given by the characteristic singular potential pair:
\beqas
\tilde \psi^X =  \bbar 0 & - s H \lambda + s H \lambda^3 \\ 0 & 0 \ebar \dd x,
\\
\psi^Y = \left(\bbar 0 & \delta \lambda^{-1} \\ \sigma \lambda^{-1} & 0 \ebar   + O(\lambda^2) \right)\dd y, \quad \delta(0)= - t_0 H, 
\eeqas
where  $\sigma$ is an arbitrary function of $y$, as are the higher
 order terms of $\psi^Y$.\\

\noindent
As in the proof of Theorem \ref{singgcptheorem}, one can show that
any other solution $\check X$ for $\hat X$ must be of the form
$\check X = \hat X T$ where $T$ is a diagonal matrix constant in $\lambda$ and has no effect on the solution surface.  Hence the potential pair $\left(\tilde \psi^X,\, \psi^Y \right)$ above   represents the most general solution for the 
characteristic singular geometric Cauchy problem of class II.\\

\begin{figure}[here]  
\begin{center}
\includegraphics[height=36mm]{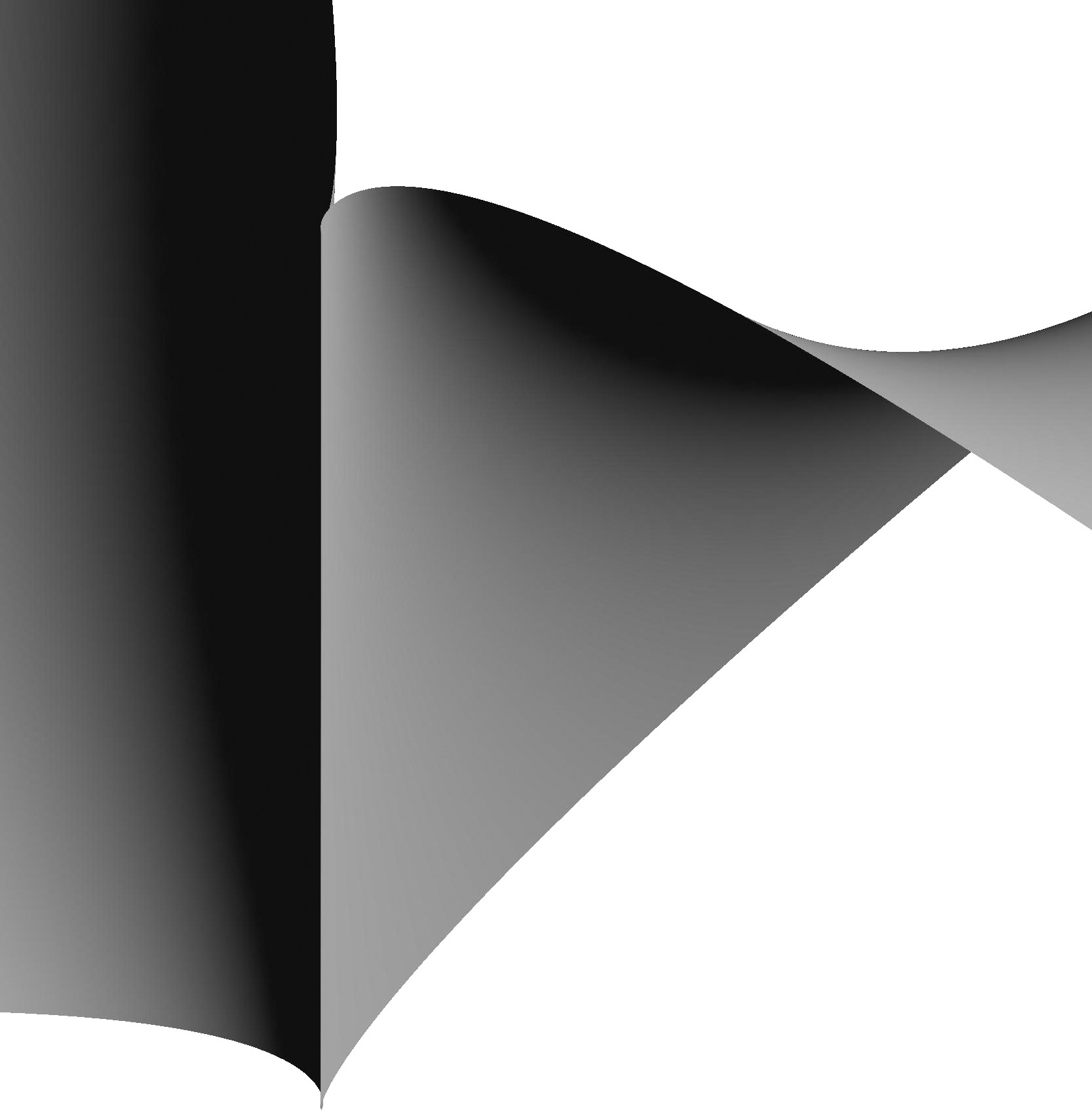} \hspace{1cm}
\includegraphics[height=36mm]{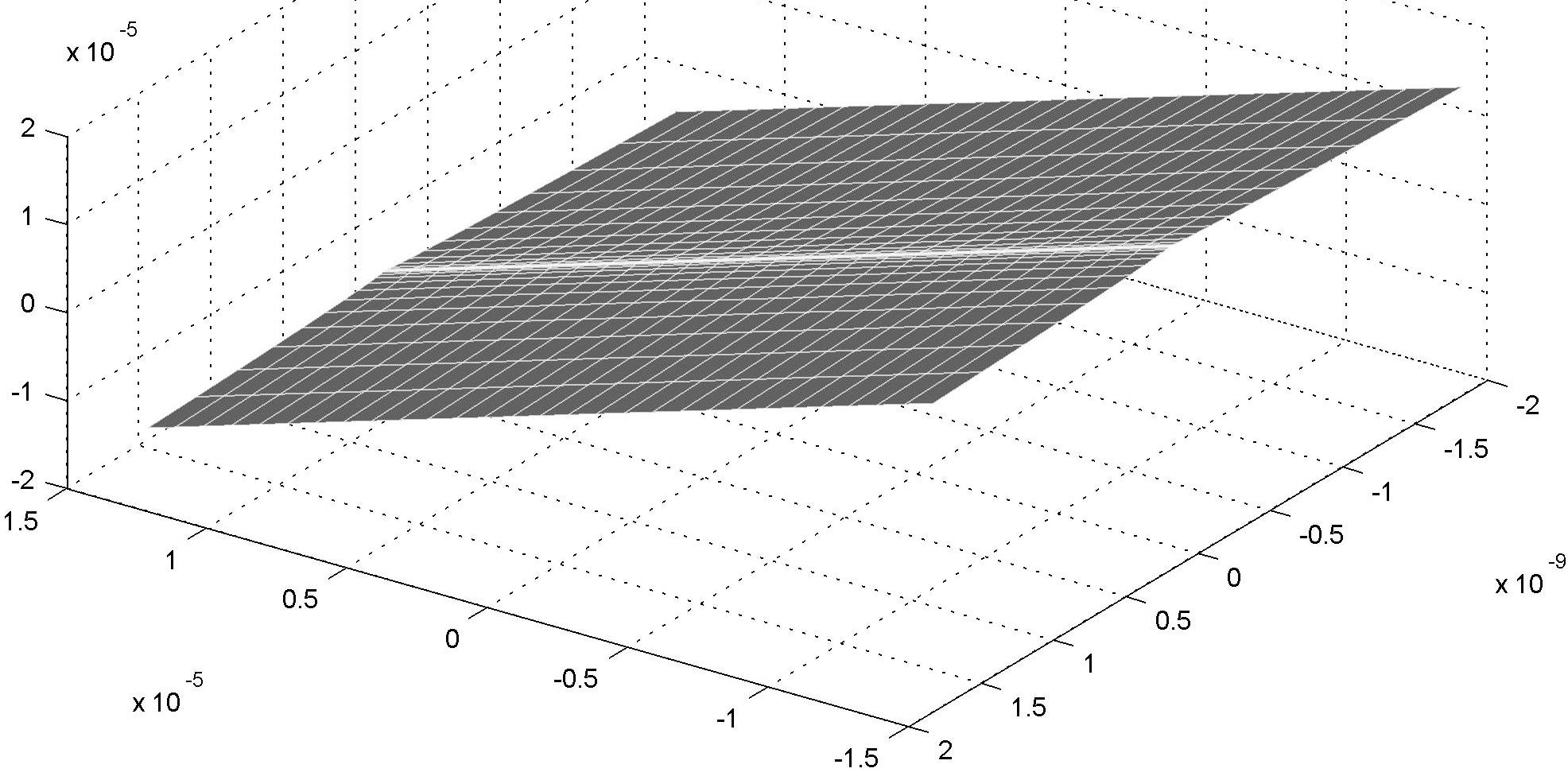} 
\end{center}
\caption{Numerical plots of solutions to the characteristic geometric Cauchy problem.
Left: $s(x)= 1$, $\delta=\sigma=1$. Right: $s(x)=1$, $\delta(y)=y$, $\sigma(y)=1$.}
\label{figure3}
\end{figure}

\noindent
Finally, to determine the  condition that ensures that the values of the map $\hat \Phi$
are not constrained to the small cell: as in Theorem \ref{firstsmallcellthm}, the surface is obtained from $\tilde \Phi = \tilde X^{-1} \hat Y = \hat G_- \hat G_+$, and $\hat \Phi = \omega_1 \tilde \Phi$ maps some point
into the big cell provided that, at some point, $\dd c_{-1} \neq 0$, where 
\bdm
\hat G_- = O_\infty(\lambda^{-2}) + \bbar 1 & b_{-1} \lambda^{-1} \\ c_{-1} \lambda^{-1} & 1 \ebar,
\edm
Evaluating
derivatives at $(0,0)$, we find that $\dd c_{-1}(0,0) = \sigma(0,0)$, and so
the non-degeneracy condition for the potential is
\bdm
\sigma(0) \neq 0.\\
\edm

\noindent
We do not analyze the types of singularities involved here, but two examples of solutions are illustrated in Figure \ref{figure3}, one appearing to be a cuspidal edge and the other appearing to be a singularity of the parameterization, rather than a true geometric singularity.\\

\section{Examples of degenerate singularities}  \label{numsect}

\noindent Examples of the way various degenerate geometric Cauchy data impact the resulting construction are illustrated in Figures \ref{figure4} 
and \ref{figure5}.\\


\begin{figure}[ht]
\centering
$
\begin{array}{cc}
\includegraphics[height=25mm]{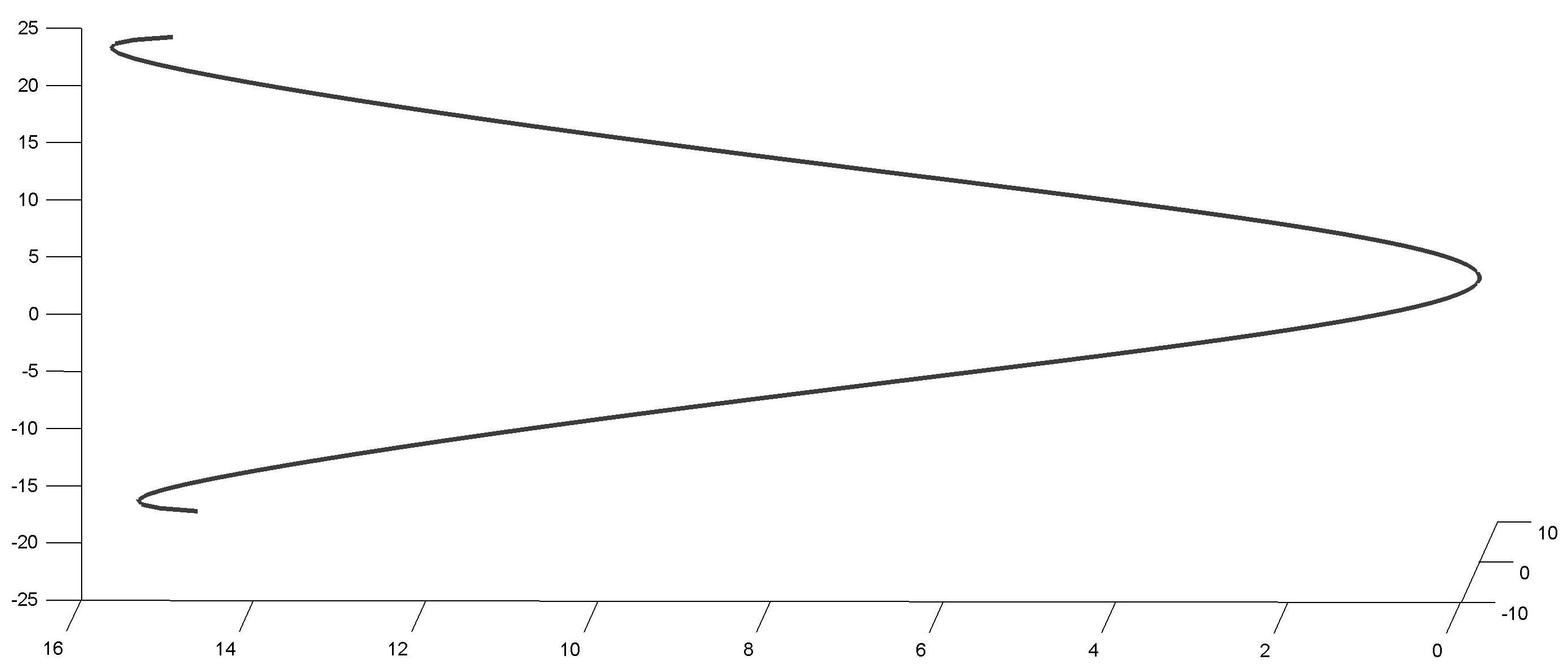} \quad & \quad
\includegraphics[height=25mm]{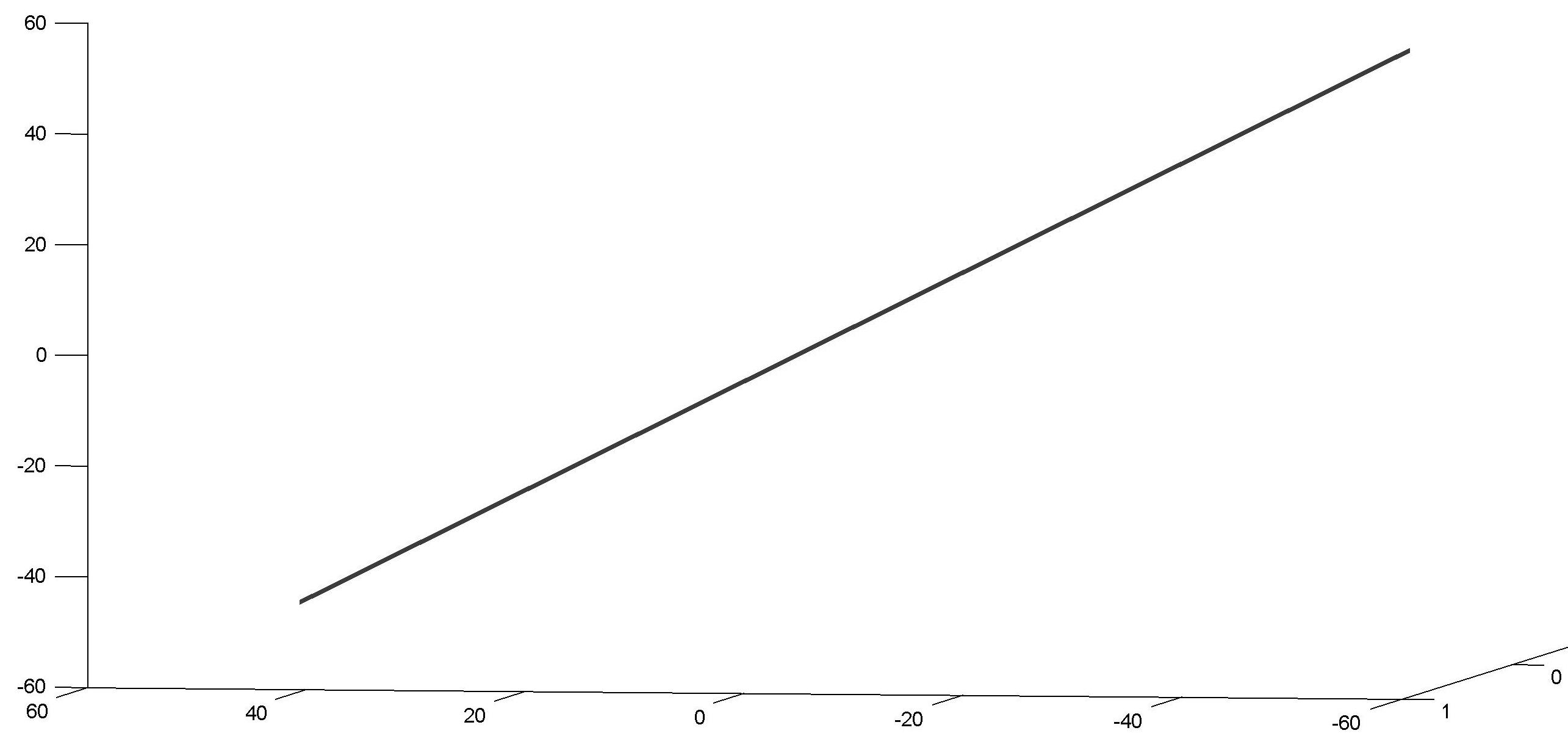} \\
s(v)=t(v)=1, \,\, \theta^\prime(v) = 0.1.
\quad & \quad  s(v)=3, \,\, t(v)=2, \,\, \theta^\prime(v)=0.  
\end{array}
$
\caption{"Surfaces" constructed from data which never enters the big cell.} 
\label{figure4}
\end{figure}

\noindent
The images in Figure \ref{figure4} are degenerate along the entire curve $u=0$. They 
are completely
degenerate in the big cell sense, because in one $s=t$ along the whole line,
and in the other $\theta^\prime=0$ along the whole line. The map $\hat \Phi$ never takes values in the big cell, and the map $f$ is just a curve.\\

\begin{figure}[ht]
\centering
$
\begin{array}{cc}
\includegraphics[height=35mm]{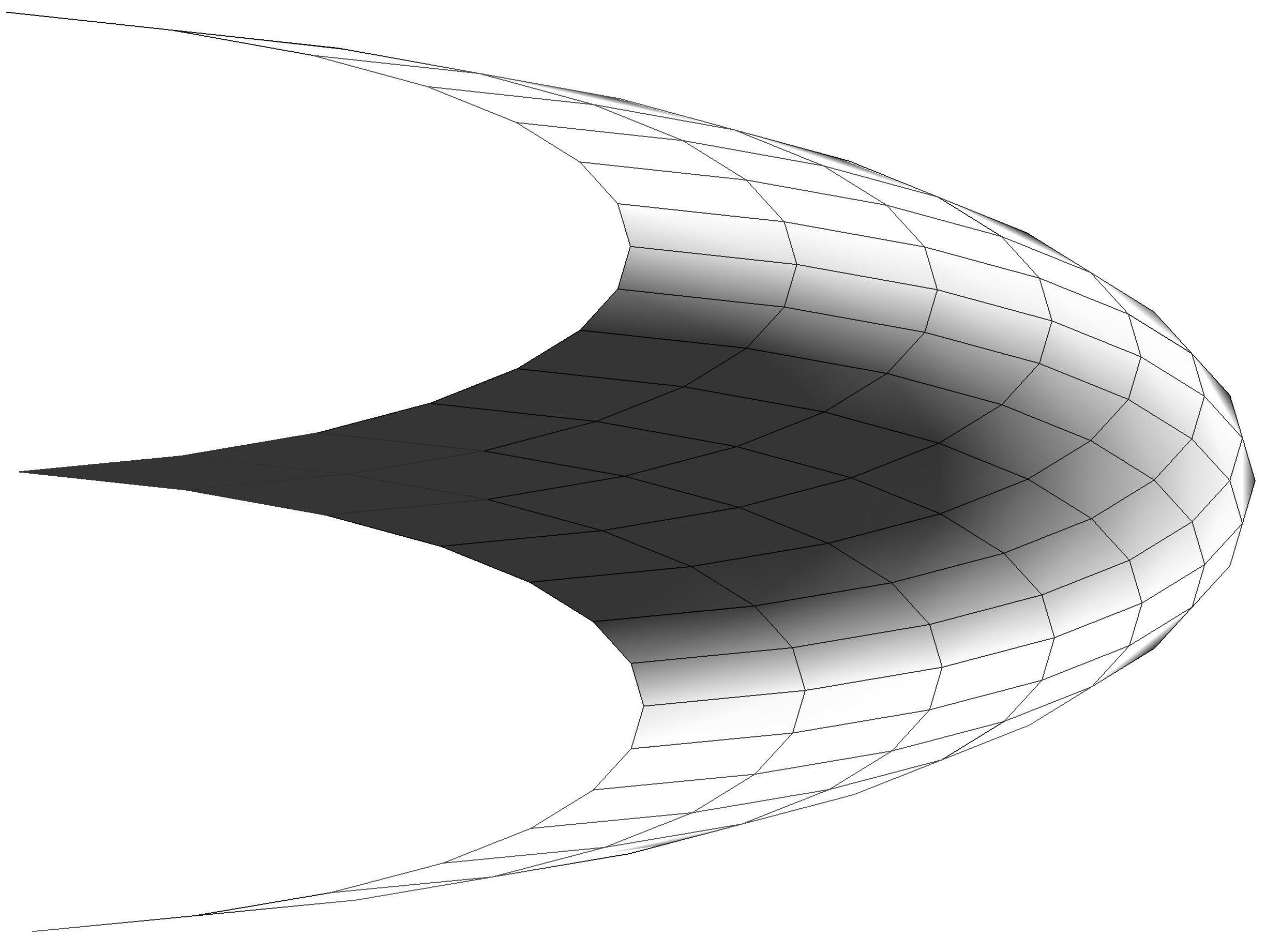} \quad & \quad
\includegraphics[height=35mm]{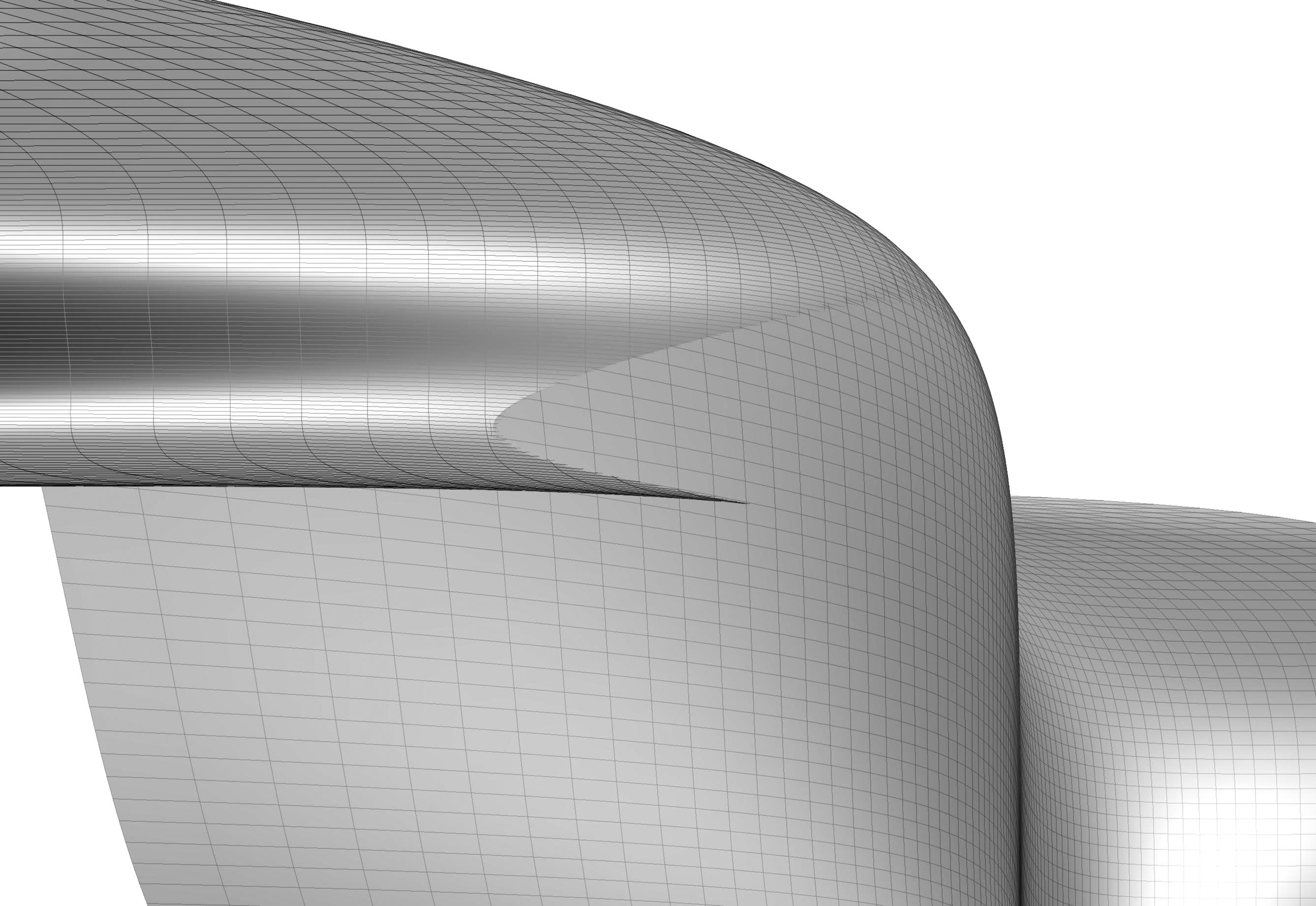} \\
 s(v)=1,\, \, t(v)=0,\,\, \theta^\prime(v)=0.0001 \quad & \quad
 s(v)=1,\, \, t(v)=2,\,\, \theta^\prime(v)=0.01v
\end{array}
$
\caption{Surfaces with degenerate singularities.} 
\label{figure5}
\end{figure}


\noindent
The first image in Figure \ref{figure5} is
 also degenerate along the whole line, because $t(v)=0$,
but this time only from the point of view of the theory of frontals.
The potential is non-degenerate, but not regular, which results in
a degenerate singularity (see Proposition \ref{nondegsingprop}). The surface
folds back over itself along the curve $u=0$, which is the curve along the
right hand side of this image.\\

\noindent
The last surface is degenerate only at the point $u=v=0$. It has the 
appearance of a cuspidal cross cap.

\providecommand{\bysame}{\leavevmode\hbox to3em{\hrulefill}\thinspace}
\providecommand{\MR}{\relax\ifhmode\unskip\space\fi MR }
\providecommand{\MRhref}[2]{%
  \href{http://www.ams.org/mathscinet-getitem?mr=#1}{#2}
}
\providecommand{\href}[2]{#2}

\end{document}